\documentclass[12pt,a4paper]{amsart}
\usepackage{amssymb, amsmath, amsthm, amsfonts,xcolor, enumerate,bbm, hyperref, fullpage}
\usepackage{tikz}
\usetikzlibrary{patterns}
\usetikzlibrary{shapes}
\newtheorem{theorem}{Theorem}[section]
\newtheorem{conj}[theorem]{Conjecture}
\newtheorem{lemma}[theorem]{Lemma}
\newtheorem{cor}[theorem]{Corollary}
\newtheorem{prop}[theorem]{Proposition}
\newtheorem{claim}[theorem]{Claim}
\theoremstyle{definition}

\usepackage{lipsum}
\usepackage{paralist}

\theoremstyle{definition}

\theoremstyle{definition}

\theoremstyle{definition}

\theoremstyle{definition}
\newtheorem{defn}[theorem]{Definition}
\theoremstyle{definition}

\theoremstyle{definition}
\newtheorem{obs}[theorem]{Observation}
\theoremstyle{definition}

\newcommand{\bnk}{\binom{n}{k}}
\newcommand{\bekr}{\binom{n-1}{k-1}}
\newcommand{\bnkk}{\binom{n-k-1}{k-1}}

\newcommand{\card}[1]{\left| #1 \right|}
\newcommand{\floor}[1]{\lfloor #1 \rfloor}
\newcommand{\ep}{\varepsilon}

\newcommand{\lam}{\lambda}
\newcommand{\al}{\alpha}
\newcommand{\be}{\beta}
\newcommand{\de}{\delta}
\newcommand{\si}{\sigma}
\newcommand{\ga}{\gamma}

\newcommand{\Ga}{\Gamma}
\newcommand{\De}{\Delta}
\newcommand{\bR}{\mathbb{R}}
\newcommand{\Ho}{\H{o}}

\newcommand{\1}{\mathbbm{1}}

\newcommand{\bN}{\ensuremath{\mathbb{N}}}

\renewcommand{\l}{\ensuremath{\ell}}

\newcommand{\ept}{\mathbb{E}}
\DeclareMathOperator{\disj}{dp}

\newcommand{\mc}[1]{\mathcal{#1}}

\title{Structure and supersaturation for intersecting families}
\author{J\'ozsef Balogh, Shagnik Das, Hong Liu, Maryam Sharifzadeh and Tuan Tran}
\thanks{J.B.\ was partially supported by NSF Grant DMS-1500121 and by the Langan Scholar Fund (UIUC).  S.D.\ was supported by GIF grant G-1347-304.6/2016.  H.L.\ was supported by the Leverhulme Trust Early Career Fellowship~ECF-2016-523. M.S.\ was supported by ERC grant~306493 and the European Union’s Horizon 2020 research and innovation programme under the Marie Curie grant agreement No 752426. T.T.\ 
was supported by the  Alexander Humboldt Foundation, and by the GACR grant GJ16-07822Y, with institutional support RVO:67985807.}

\begin{document}
\maketitle

\begin{abstract}
The extremal problems regarding the maximum possible size of intersecting families of various combinatorial objects have been extensively studied. In this paper, we investigate supersaturation extensions, which in this context ask for the minimum number of disjoint pairs that must appear in families larger than the extremal threshold. We study the minimum number of disjoint pairs in families of permutations and in $k$-uniform set families, and determine the structure of the optimal families. Our main tool is a removal lemma for disjoint pairs. We also determine the typical structure of $k$-uniform set families without matchings of size $s$ when $n \ge 2sk + 38s^4$, and show that almost all $k$-uniform intersecting families on vertex set $[n]$ are trivial when $n\ge (2+o(1))k$.
\end{abstract}

\section{Introduction}
Determining the size of intersecting families of discrete objects is a line of research with a long history, originating in extremal set theory. A set family is \emph{intersecting} if any two of its sets share a common element. A classic result of Erd\Ho s, Ko and Rado~\cite{ekr61} from 1961 states that when $n\ge 2k$, the size of the largest intersecting $k$-uniform set family 
over $[n]$ is ${n-1\choose k-1}$. Furthermore, when $n\ge 2k+1$, the only extremal configurations are the \emph{trivial} families, where all edges contain a given element. This fundamental theorem has since inspired a great number of extensions and variations.

A recent trend in extremal combinatorics is to study the supersaturation extension of classic results. This problem, sometimes referred to as the Erd\H{o}s--Rademacher problem, asks for the number of forbidden substructures that must appear in a configuration larger than the extremal threshold. We often observe an interesting phenomenon: while the extremal result only requires one forbidden substructure to appear, we usually find several. The first such line of research extended Mantel's Theorem~\cite{Mantel}, which states that an $n$-vertex triangle-free graph can have at most $\lfloor n^2/4\rfloor$ edges. Rademacher (unpublished) showed that one additional edge would force the appearance of at least $\lfloor n/2\rfloor$ triangles. Determining the number of triangles in larger graphs attracted a great deal of attention, starting with the works of Erd\H{o}s~\cite{E-R, E-clique} and Lov\'asz and Simonovits~\cite{L-S} and culminating in the asymptotic solution due to Razborov~\cite{Raz} and the recent exact solution determined by Liu, Pikhurko and Staden~\cite{LPS17}.  Supersaturation problems have since been studied in various contexts; examples include extremal graph theory~\cite{B-L, K-N, Mu, Niki, PY, Rei}, extremal set theory~\cite{BW, D-G-S, DGKS, Kl, Sam}, poset theory~\cite{BPW, NSS, SS1}, and group theory \cite{C-P-S,H14,SS2}.

The first result of our paper concerns supersaturation for the extension of the Erd\H{o}s--Ko--Rado Theorem to families of permutations.  A pair of permutations $\sigma,\pi\in S_n$ is said to be {\em intersecting} if $\{i\in [n]:\pi(i)=\si(i)\} \neq \emptyset$, and {\em disjoint} otherwise. A family $\mc{F}\subseteq S_n$ is {\em intersecting} if every pair of permutations in the  family is. A natural construction of an intersecting family is to fix some pair $i,j\in [n]$, and take all permutations that map $i$ to $j$; we call this a {\em coset}, and denote it by $\mc T_{(i,j)}$.  Observe that $\card{\mc T_{(i,j)}} = (n-1)!$, and Deza and Frankl~\cite{DF} showed that this is the largest possible size of an intersecting family in $S_n$.

In the corresponding supersaturation problem, we seek to determine how many disjoint pairs of permutations must appear in larger families. We write $\disj(\mc{F})$ for the number of disjoint pairs of permutations in a family $\mc{F}\subseteq S_n$. By the Deza--Frankl Theorem, when $\card{\mc{F}}\le (n-1)!$, we need not have any disjoint pairs in $\mc{F}$, while for $\card{\mc{F}}>(n-1)!$, $\disj(\mc{F})$ must be positive.

One might expect a family of permutations with the minimum number of disjoint pairs to contain large intersecting subfamilies, and a candidate construction is therefore the union of an appropriate number of cosets.  However, these unions are not isomorphic, as pairs of cosets can intersect each other differently.  Indeed, given pairs $(i_1, j_1) \neq (i_2, j_2) \in [n]^2$, we have $\mc T_{(i_1, j_1)} \cap \mc T_{(i_2, j_2)} = \{ \pi \in S_n : \pi(i_1) = j_1 \textrm{ and } \pi(i_2) = j_2 \}$, which is empty if $i_1 = i_2$ or $j_1 = j_2$, and has size $(n-2)!$ otherwise.  To fit the family within as few cosets as possible, we should take the cosets to be pairwise-disjoint, motivating the following definition.

\begin{defn}[The family $\mc T(n,s)$] \label{def:lexperm}
	Writing $\pi = \begin{pmatrix} \pi(1) & \pi(2) & \hdots & \pi(n) \end{pmatrix}$, we can equip $S_n$ with the lexicographic ordering, where $\pi < \sigma$ if there is some $k \in [n]$ such that $\pi(k) < \sigma(k)$ and $\pi(i) = \sigma(i)$ for all $i < k$.  Then, given any $0 \le s \le n!$, we denote by $\mc T(n,s)$ the first $s$ permutations under this ordering.
	
	In particular, if $s = \ell (n-1)! + r$ for some $0 \le \ell \le n$ and $0 \le r < (n-1)!$, the family $\mc T(n,s)$ contains the pairwise-disjoint cosets $\mc T_{(1,j)}$ for $1 \le j \le \ell$, together with $r$ further permutations from the disjoint coset $\mc T_{(1,\ell+1)}$.
\end{defn}

 Our first result shows, for certain ranges of family sizes $s$, that these families indeed minimise $\disj(\mc{F})$ over all families $\mc F \subseteq S_n$ with $\card{\mc F} = s$.

\begin{theorem}\label{thm-supsat-perm}
	There exists a constant $c>0$ such that the following holds. Let $n,k$ and $s$ be positive integers such that $k\le cn^{1/2}$, and $s=(k+\ep)(n-1)!$ for some real $\ep$ with $|\ep|\le ck^{-3}$. Then any family $\mc{F}\subseteq S_n$ with $\card{\mc{F}}=s$ satisfies $\disj(\mc{F})\ge \disj(\mc{T}(n,s))$.
\end{theorem}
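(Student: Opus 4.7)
The plan is to combine the removal lemma for disjoint pairs (flagged in the abstract as the paper's main tool and presumably established in an earlier section) with a direct structural comparison against the explicit lexicographic family $\mathcal{T}(n,s)$.

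As a preliminary, I would compute $\disj(\mathcal{T}(n,s))$ to leading order. Since no pair inside a single coset is disjoint, the count reduces to cross-pairs between pairs of cosets $\mathcal{T}_{(1,a)}, \mathcal{T}_{(1,b)}$. For a fixed $\pi\in \mathcal{T}_{(1,a)}$ the disjoint $\sigma\in\mathcal{T}_{(1,b)}$ biject with derangements of $[n]$ sending $a$ to $b$, of which there are $D_n/(n-1) \sim (n-1)!/e$. Hence each pair of distinct cosets contributes $(1/e + o(1))(n-1)!^2$ disjoint pairs, and with $s = k(n-1)!+r$ ($r=\epsilon(n-1)!$) this gives $\disj(\mathcal{T}(n,s)) = \binom{k}{2}(n-1)!^2/e + kr(n-1)!/e + (\textrm{lower order})$.

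For the lower bound, let $\mathcal{F}$ be a minimiser with $\disj(\mathcal{F}) \le \disj(\mathcal{T}(n,s))$; this is of order $k^2(n-1)!^2$, small enough for the removal lemma to apply and to yield pairwise-disjoint cosets $\mathcal{T}_1,\ldots,\mathcal{T}_m$ with remainder $B := \mathcal{F}\setminus\bigcup_t \mathcal{T}_t$ of size much less than $(n-1)!$; the size constraint forces $m\in\{k,k+1\}$. Since any three pairwise-disjoint cosets share a common first or second coordinate, after applying the involution $\pi\mapsto\pi^{-1}$ I may assume $\mathcal{T}_t = \mathcal{T}_{(1,j_t)}$. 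Writing $A_t = \mathcal{F}\cap\mathcal{T}_t$, the disjoint pair count splits as
\[
\disj(\mathcal{F}) \;=\; \sum_{t<u}\disj(A_t,A_u) \;+\; \sum_t \disj(A_t, B) \;+\; \disj(B),
\]
with each within-$A_t$ contribution zero because $\mathcal{T}_t$ is intersecting. Using the derangement density, the first sum is at least $(1/e+o(1))\sum_{t<u}|A_t||A_u|$, and the identity $\sum_{t<u}|A_t||A_u| = \frac{1}{2}\bigl((\sum_t|A_t|)^2 - \sum_t|A_t|^2\bigr)$ shows that under $\sum_t|A_t| = s-|B|$ and $|A_t|\le (n-1)!$ this quantity is minimised by filling $m-1$ cosets completely and putting the rest in the last --- precisely the distribution of sizes realised by $\mathcal{T}(n,s)$.

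It then remains to force $|B| = 0$. Each permutation in $B$ lies outside every chosen coset, so it is disjoint from a $(1/e+o(1))$-fraction of each, contributing order $k(n-1)!/e$ extra disjoint pairs --- which dwarfs any saving obtainable by rebalancing the $|A_t|$'s at the boundary; this pins the optimal configuration to $\mathcal{T}(n,s)$ up to the natural symmetries of $S_n$. The main obstacle I anticipate is the error accounting. The removal lemma leaves a nontrivial remainder, the derangement density is $1/e$ only up to $O(1/n)$ corrections, and the extremal distribution of $(|A_t|)$ changes discretely across $\epsilon=0$. The narrow window $|\epsilon|\le ck^{-3}$ is presumably exactly the range in which each deviation from the lex arrangement costs strictly more than it saves after all these error terms are combined, so the main technical burden is squeezing the estimates down to this precision.
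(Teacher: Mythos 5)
Your overall strategy — apply the removal lemma, then directly compare against $\mathcal{T}(n,s)$ using an $A_t$/$B$ decomposition — is in the right spirit and matches the paper's high-level plan, but there is a genuine gap at the central step.

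The removal lemma (Lemma~\ref{lem-removal-perm}, built on the Ellis--Filmus--Friedgut stability theorem) produces a union $\mathcal{G}$ of $k$ cosets close to $\mathcal{F}$ in symmetric difference, but it gives \emph{no guarantee that these cosets are pairwise disjoint}. You immediately assume they are, and then use the fact that pairwise-disjoint cosets all lie on one axis-aligned line (which lets you reduce to $\mathcal{T}_t = \mathcal{T}_{(1,j_t)}$ and run the convexity argument on $\sum_{t<u}|A_t||A_u|$). Since the approximating cosets might genuinely intersect each other, this step is unjustified. The paper spends most of its technical effort precisely here: Proposition~\ref{prop:int-graphs} encodes the geometry of a possibly non-canonical union of cosets through its intersection graph, and part (d), proved via a delicate clique-counting argument in Appendix~\ref{app:int-graphs}, shows that a non-canonical union of cosets has \emph{strictly} more internal disjoint pairs than the lexicographic family of the same size. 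Without some version of this, your argument does not close: a priori a union of overlapping cosets could beat the pairwise-disjoint configuration, since overlapping cosets pack fewer permutations and so create a smaller family.

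A secondary issue is the passage from ``$\mathcal{F}$ is close to $\mathcal{G}$'' to the clean decomposition $\mathcal{F} = (\bigcup_t A_t) \cup B$ with $B$ small. The symmetric difference bound does not directly control $|B|$ on its own in a way you can immediately exploit; the paper instead first proves (Claim~\ref{cl-rs}) a local switching statement — that any optimal $\mathcal{F}$ is either contained in $\mathcal{G}$ or contains $\mathcal{G}$ — by comparing $\disj(\sigma,\mathcal{F})$ for $\sigma\in\mathcal{F}\setminus\mathcal{G}$ against $\disj(\pi,\mathcal{F})$ for $\pi\in\mathcal{G}\setminus\mathcal{F}$, and only then handles the two signs of $\varepsilon$. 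Your convexity calculation and ``$B$ contributes $\approx k(n-1)!/e$ extra disjoint pairs'' heuristic are the right intuitions, but in the paper they only go through after Claim~\ref{cl-rs} has cleaned up the structure and after Proposition~\ref{prop:int-graphs}(b)--(d) have handled the non-disjoint case. You correctly anticipate that the error accounting is delicate near $\varepsilon=0$, but the missing ingredient is not mere bookkeeping; it is the intersection-graph machinery that rules out unions of overlapping cosets.
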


We next consider the supersaturation extension of the original Erd\Ho s--Ko--Rado Theorem, where one seeks to minimise the number of disjoint pairs of sets in a $k$-uniform family of $s$ subsets of $[n]$.  Bollob\'as and Leader~\cite{Bol-L} provided, for every $s$, a family of constructions known as the $\ell$-balls, and conjectured that for some $1 \le \ell \le k$, an $\ell$-ball is optimal for the supersaturation problem.  In particular, when $\ell = 1$, the construction is an initial segment of the lexicographic ordering.

Denote by $\binom{[n]}{k}$ the family of all $k$-element subsets of $[n]$. Letting $\mc L(n,k,s)$ be the initial segment of the first $s$ sets in $\binom{[n]}{k}$, we write $\disj(n,k,s)$ for $\disj(\mc L(n,k,s))$, where again $\disj(\mc F)$ is the number of disjoint pairs in a set family $\mc F$.  Das, Gan and Sudakov~\cite{D-G-S-2} proved that if $n > 108(k^3r + k^2r^2)$ and $s \le \binom{n}{k} - \binom{n-r}{k}$, then for any family $\mc F \subseteq \binom{[n]}{k}$ of size $s$, $\disj(\mc F) \ge \disj(n,k,s)$.  That is, when $n$ is sufficiently large and the families are of small size, the initial segments of the lexicographic order minimise the number of disjoint pairs, confirming the Bollob\'as--Leader conjecture in this range.

Note that, for fixed $r$, the result in~\cite{D-G-S-2} requires $k = O(n^{1/3})$.  Frankl, Kohayakawa and R\"odl~\cite{FKR} showed that initial segments of the lexicographic order are \emph{asymptotically} optimal even for larger uniformities $k$.  In our next result, we extend the exact results to larger $k$ as well, showing that the lexicographic initial segments are still optimal when $k = O(n^{1/2})$.

\begin{theorem} \label{thm:setsupersat}
There is some absolute constant $C$ such that if $n \ge C k^2 r^3$ and $s \le \bnk - \binom{n-r}{k}$, then any family $\mc{F} \subseteq \binom{[n]}{k}$ with $\card{\mc{F}} = s$ satisfies $\disj(\mc{F}) \ge \disj(n,k,s)$; that is, $\mc{L}(n,k,s)$ minimises the number of disjoint pairs.
\end{theorem}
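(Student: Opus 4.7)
The plan is to induct on $r$, using the removal lemma for disjoint pairs (the main new technical tool of the paper) to extract structural information from any minimizer. The base case $r = 1$ corresponds to $s \le \binom{n}{k} - \binom{n-1}{k} = \binom{n-1}{k-1}$, where $\mathcal{L}(n,k,s)$ is contained in the star at $1$ and so $\disj(\mathcal{L}(n,k,s)) = 0$; the inequality is trivial.

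For the inductive step, fix $r \ge 2$ and let $\mathcal{F}$ minimize $\disj(\cdot)$ among $s$-families, so $\disj(\mathcal{F}) \le \disj(\mathcal{L}(n,k,s))$. A direct count on the lex segment, exploiting that every member of $\mathcal{L}(n,k,s)$ has minimum element in $[r]$, yields an explicit upper bound. In the regime $n \ge Ck^2r^3$, the removal lemma converts this bound into structural information: there exists a pivot element, which I relabel to $1$, such that the atypical part $\mathcal{F}_2 := \{A \in \mathcal{F} : 1 \notin A\}$ is of controlled size. Setting $\mathcal{F}_1 := \mathcal{F} \setminus \mathcal{F}_2$, every disjoint pair in $\mathcal{F}$ involves at most one set in $\mathcal{F}_1$, so
\[
\disj(\mathcal{F}) = \disj(\mathcal{F}_2) + \sum_{B \in \mathcal{F}_2} \card{\{A \in \mathcal{F}_1 : A \cap B = \emptyset\}}.
\]
A swap argument then identifies $\mathcal{F}_1$ with the entire star at $1$: if some $A \ni 1$ is missing from $\mathcal{F}_1$, then for a suitable $B \in \mathcal{F}_2$ replacing $B$ by $A$ loses roughly $\binom{n-k-1}{k-1}$ disjoint pairs (between $B$ and the star-at-$1$ component) and gains at most $|\mathcal{F}_2|$ new pairs within $\mathcal{F}_2$, a net decrease given the size control on $\mathcal{F}_2$.

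Once $\mathcal{F}_1 = \{A : 1 \in A\}$, the cross-sum above equals $|\mathcal{F}_2|\binom{n-k-1}{k-1}$ independently of the shape of $\mathcal{F}_2$, so minimizing $\disj(\mathcal{F})$ reduces to minimizing $\disj(\mathcal{F}_2)$ over $k$-families of size $s - \binom{n-1}{k-1}$ in $\binom{[n]\setminus\{1\}}{k}$. Relabelling $[n]\setminus\{1\}$ as $[n-1]$, this is the same problem with parameters $(n-1,k,r-1)$: the bound $s - \binom{n-1}{k-1} \le \binom{n-1}{k} - \binom{n-r}{k}$ matches, and $n-1 \ge Ck^2(r-1)^3$ is preserved for $C$ a large absolute constant. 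The inductive hypothesis supplies $\mathcal{F}_2 = \mathcal{L}(n-1,k,s - \binom{n-1}{k-1})$, and reinserting the element $1$ recovers $\mathcal{F} = \mathcal{L}(n,k,s)$.

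The hard step is identifying $\mathcal{F}_1$ with the full star. The crude swap estimate above is cleanly favorable only when $|\mathcal{F}_2| \ll \binom{n-k-1}{k-1}$; in the full range $s \le \binom{n}{k} - \binom{n-r}{k}$ the target size of $\mathcal{F}_2$ can be of order $r \binom{n-2}{k-1}$, comparable to $\binom{n-k-1}{k-1}$ when $r$ is moderate, and the naive swap is too weak. Closing this gap likely requires the removal lemma to provide finer structural information about $\mathcal{F}_2$ -- for instance, that $\mathcal{F}_2$ is itself concentrated on a star at some element $j \neq 1$ -- so the cross-term can be estimated sharply and the swap carried out greedily starting from the most ``spread'' $B \in \mathcal{F}_2$. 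This enhanced structural output, delivered under the quadratic hypothesis $n \ge Ck^2r^3$, is what I expect permits the improvement over the cubic $n = \Omega(k^3 r)$ regime of Das--Gan--Sudakov.
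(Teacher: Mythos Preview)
Your framework of peeling off a full star and inducting is sound and matches one branch of the paper's argument, but you have correctly identified --- and not resolved --- the hard case. The gap is real: when $s$ is well above $\binom{n-1}{k-1}$ the removal lemma tells you $\mathcal{F}$ is close to a union of $r$ (or $r-1$) stars, not to a single star, so $|\mathcal{F}_2|$ is of order $(r-1)\binom{n-1}{k-1}$ and your swap argument collapses. Your hope that the removal lemma alone delivers ``finer structural information'' about $\mathcal{F}_2$ is not borne out; a second tool is needed.

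The paper's resolution differs from your outline in two ways. First, the induction is on $s$, not on $r$. This matters: removing one set and applying the induction hypothesis shows that every $F\in\mathcal{F}$ intersects at least $\tfrac{\gamma}{2}\binom{n-1}{k-1}$ other sets, a pointwise fact you cannot extract from induction on $r$. Second, and decisively, the no-full-star case is handled not by refining the removal lemma but by the expander-mixing lemma applied to the Kneser graph. One defines $X$ to be the set of elements $i$ with $|\mathcal{F}^*(i)|\ge \tfrac{\gamma}{4k}\binom{n-1}{k-1}$, uses the spectral gap to lower-bound $\disj(\mathcal{F}^*(i),\mathcal{F}_1)$ and hence upper-bound the number of $\mathcal{F}_1$-intersections of some $F\in\mathcal{F}^*(i)$; combined with the pointwise intersection bound this forces $|\mathcal{F}^*(i)|\ge\tfrac{\gamma}{8}\binom{n-1}{k-1}$, giving $|X|\le 8r/\gamma$. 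A further application shows $X$ is a cover, a balancing argument shows the parts $\mathcal{F}^*(i)$ are nearly equal, and then the spectral lower bound on $\disj(\mathcal{F})$ compared against~\eqref{ineq:setbound2} forces $|X|=r$ exactly. Once $\mathcal{F}$ has an $r$-cover the problem is finished directly, without needing the inductive reduction at all in this branch. It is this spectral bootstrapping, not any strengthening of the removal lemma, that buys the improvement from $n=\Omega(k^3 r)$ to $n=\Omega(k^2 r^3)$.
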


With our next results, we address a different variation of classic extremal problems.  Rather than considering the supersaturation phenomenon, we describe the typical structure of set families with a given property, showing that almost all such families are subfamilies of the trivial extremal constructions.

We first consider the famous Erd\H{o}s Matching Conjecture concerning the largest $k$-uniform set families over a ground set of size $n$ that have no matching of size $s$. There are two constructions that trivially avoid a matching of size $s$: a clique on $ks-1$ vertices, and the family of all edges intersecting a set of size $s-1$.  In~\cite{erd65}, Erd\H{o}s conjectured that one of these constructions is always optimal.

\begin{conj}[Erd\H{o}s~\cite{erd65}, 1965]
Given integers $n, k$ and $s$, let $\mc F \subseteq \binom{[n]}{k}$ be a set family with no matching of size $s$.  Then
\[ \card{\mc F} \le \max \left( \binom{ks-1}{k}, \binom{n}{k} - \binom{n-s+1}{k} \right). \]
\end{conj}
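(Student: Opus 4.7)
This is the celebrated Erd\H{o}s Matching Conjecture, open in general since 1965, so my plan is an attack strategy in the spirit of known partial results (notably Erd\H{o}s--Gallai and Frankl), not a complete proof.

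First, I would reduce to shifted families. The shifting operation $S_{ij}$ with $i<j$ preserves $\card{\mc F}$ and $k$-uniformity, and it also preserves the property of having no matching of size $s$: any matching in the shifted family pulls back to a matching of the same size in the original. Iterating until stable, I may assume $\mc F$ is shifted, which imposes strong monotonicity on its set of characteristic vectors.

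Next, I would exploit the consequences of shiftedness. A standard observation shows that if a shifted $k$-uniform $\mc F$ has no matching of size $s$, then there is a threshold $M=M(k,s)$ such that every $F\in \mc F$ intersects $[M]$. The goal is to choose $M$ as small as possible and then argue dichotomously: either the support of $\mc F$ is effectively confined to $[ks-1]$ (giving the clique bound $\binom{ks-1}{k}$), or most sets meet a very small core $[s-1]$ (giving the star bound $\binom{n}{k}-\binom{n-s+1}{k}$). To separate these cases, I would induct on $s$ using link families $\mc F(i)=\{F\setminus\{i\}: i\in F\in \mc F\}$, control each link via the inductive hypothesis, and sum the contributions with a weighting that respects the two candidate optima.

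The main obstacle is the crossover regime where $n$ is comparable to $ks$: here neither construction dominates cleanly, compression arguments become too lossy, and the inductive step fails to recombine exactly to the conjectured bound. Closing this gap requires essentially new ideas, which is precisely why the conjecture has resisted more than half a century of effort; Frankl's best result confirms the bound only for $n$ roughly at least $(2s-1)k$, and pushing down to $n=ks$ is widely regarded as the hard core of the problem. I would therefore not expect a short, self-contained proof along only the outline above; any genuine progress likely requires a fusion of Frankl's random-walk method, entropy-based bounds in the spirit of Huang--Loh--Sudakov, or spectral arguments, deployed in a novel way.
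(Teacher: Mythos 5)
You have correctly identified that this statement is the Erd\H{o}s Matching Conjecture, which remains open in general; the paper itself does not prove it, but states it as a conjecture and relies only on Frankl's partial result establishing the bound for $n \ge (2s-1)k - s + 1$. Your outline of the compression-plus-induction strategy and your diagnosis of the crossover regime $n \approx ks$ as the genuine obstruction are both accurate and consistent with the state of the art, so there is nothing to fault here beyond the inherent impossibility of supplying a complete proof of an open problem.
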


Frankl~\cite{Frankl13} proved the conjecture in the range $n \ge (2s-1)k - s + 1$, showing that the extremal families can be covered by $s-1$ elements.  Adapting the methods of Balogh et al.~\cite{BDDLS}, we show that a slightly larger lower bound on $n$ guarantees that almost all families without a matching of size $s$ have a cover of size $s-1$.

\begin{theorem}\label{thm:no-s-disj}
Let $n,k \ge 3$ and $s\ge 2$ be integers with $n\ge 2sk+38s^4$. Then the number of subfamilies of $\binom{[n]}{k}$ with no matching of size $s$ is
	$\left(\binom{n}{s-1}+o(1)\right)2^{\bnk-\binom{n-s+1}{k}}$, where the term $o(1)$ tends to $0$ as $n \rightarrow \infty$.
\end{theorem}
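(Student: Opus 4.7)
The \emph{lower bound} is a Bonferroni estimate. For each $T\in\binom{[n]}{s-1}$, the star $\mc{S}_T:=\{F\in\binom{[n]}{k}:F\cap T\neq\emptyset\}$ has $|\mc{S}_T|=E:=\binom{n}{k}-\binom{n-s+1}{k}$, and every one of its $2^E$ subfamilies is matching-of-size-$s$-free. For distinct $T,T'\in\binom{[n]}{s-1}$, a direct count gives $|\mc{S}_T|-|\mc{S}_T\cap\mc{S}_{T'}|\ge\binom{n-s}{k-1}$ (the minimum being attained when $|T\cap T'|=s-2$), and the hypothesis $n\ge 2sk+38s^4$ makes $\binom{n-s}{k-1}\ge n-s$ far larger than $2\log\binom{n}{s-1}$. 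Writing $N$ for the count we want to bound, Bonferroni then yields
\[
N\ge\binom{n}{s-1}2^E-\binom{n}{s-1}^2\cdot 2^{E-\binom{n-s}{k-1}}=\left(\binom{n}{s-1}-o(1)\right)2^E.
\]

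For the \emph{upper bound}, partition the matching-of-size-$s$-free families $\mc{F}\subseteq\binom{[n]}{k}$ by whether they admit an $(s-1)$-element cover. Any cover-admitting family is a subfamily of some star $\mc{S}_T$, so a union bound over $T\in\binom{[n]}{s-1}$ contributes at most $\binom{n}{s-1}2^E$ families. For cover-free families I would invoke a Hilton--Milner-type bound for matchings, extracted from the shifting argument in~\cite{Frankl13}: whenever $n\ge 2sk+38s^4$, any matching-of-size-$s$-free $\mc{F}\subseteq\binom{[n]}{k}$ without a cover of size $s-1$ satisfies $|\mc{F}|\le E'=E'(s,k)$ for some function depending only on $s$ and $k$ (the clique $\binom{[sk-1]}{k}$ of size $\binom{sk-1}{k}$ being the extremal example). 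The number of such families is at most $\sum_{m\le E'}\binom{\binom{n}{k}}{m}\le 2^{O(E'k\log n)}$, and comparing with $E\ge(s-1)\binom{n-1}{k-1}$ throughout the range $n\ge 2sk+38s^4$ gives $E'k\log n=o(E)$. Hence cover-free families contribute only $2^{o(E)}=o(2^E)$, yielding $N\le\binom{n}{s-1}2^E+o(2^E)=(\binom{n}{s-1}+o(1))2^E$, matching the lower bound.

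The main obstacle, and the place where adapting the methods of~\cite{BDDLS} is essential, is securing this Hilton--Milner-type gap at the threshold $n\ge 2sk+38s^4$: one must show uniformly that every cover-free matching-of-size-$s$-free family has size bounded by some $E'(s,k)$ independent of $n$. This is the quantitative heart of the argument and follows along the lines of Frankl's shifting framework, with the $38s^4$ lower-order term in the hypothesis providing the slack needed to push stability down to this range. A secondary point is checking that $E'(s,k)\cdot k\log n=o(E)$ throughout the entire parameter range (including the regime where both $s$ and $k$ grow); the bound $n\ge 38s^4$ alone makes this straightforward. Once the Hilton--Milner input is in place, the rest of the proof is the clean Bonferroni--union-bound sandwich above.
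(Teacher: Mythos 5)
Your lower-bound computation is fine and, modulo phrasing, is the same inclusion--exclusion the paper uses to identify the parameters $T=\binom{n}{s-1}$ and $N_2=\binom{n}{k}-\binom{n-s+1}{k}-\binom{n-s}{k-1}$. The upper bound, however, contains a genuine error. You claim that every matching-of-size-$s$-free family $\mc{F}$ without a cover of size $s-1$ satisfies $\card{\mc{F}}\le E'(s,k)$ for some $E'$ depending only on $s$ and $k$, with the clique $\binom{[sk-1]}{k}$ as extremal example. This is false: already for $s=2$ the Hilton--Milner family (all $k$-sets containing $1$ and meeting $\{2,\ldots,k+1\}$, together with $\{2,\ldots,k+1\}$) is intersecting, has no cover of size one, and has size $\binom{n-1}{k-1}-\binom{n-k-1}{k-1}+1$, which for fixed $k$ grows like $n^{k-2}$. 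More generally, Frankl's and Frankl--Kupavskii's results show that the largest cover-free maximal families have size only $\Theta\bigl(\tfrac{1}{s}\binom{n-k-s+1}{k-1}\bigr)$ below the extremal value $N_0$, not bounded by a function of $s,k$ alone.

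Once you use the correct, $n$-dependent bound $E'$, the counting step collapses: $E'$ is then a positive fraction of $N_0$ (for $n$ in the stated range), and $\sum_{m\le E'}\binom{\binom{n}{k}}{m}$ is not $o(2^{N_0})$ --- it is vastly larger. The issue is that you are directly enumerating \emph{all} subfamilies that happen to lie inside some cover-free family, rather than exploiting the structure of the \emph{maximal} such families. The paper's fix is exactly this: (i) bound the number $M$ of maximal matching-free families by $\binom{n}{k}^{\binom{sk}{k}}$ using the Bollob\'as set-pairs inequality applied to a minimal generating set (Proposition~\ref{prop:max-no-s-disj}), giving $\log M\le n\binom{sk}{k}$; (ii) invoke the stability theorem of Frankl--Kupavskii to get $N_1\le N_0-\frac{1}{s+1}\binom{n-k-s+1}{k-1}$ for non-extremal maximal families; and then (iii) apply the abstract counting Lemma~\ref{lem:count-fam-decreasing}, whose hypothesis $2\log M+\max(N_1,N_2)-N_0\to-\infty$ precisely balances the polynomial-in-$n$ quantity $n\binom{sk}{k}$ against the stability gap $\frac{1}{s+1}\binom{n-k-s+1}{k-1}$. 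That balance is where the threshold $n\ge 2sk+38s^4$ enters, not in forcing cover-free families to be small in absolute terms. You would need to replace your third paragraph entirely with an argument of this kind.
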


The $s = 2$ case corresponds to intersecting families.  In this case, Balogh et al.~\cite{BDDLS} showed that when $n \ge (3 + o(1))k$, almost all intersecting families are trivial.  Our final result improves the required bound on $n$ to the asymptotically optimal $n \ge (2 + o(1))k$.  Indeed, when $n=2k$, then the number of intersecting families is $3^{\frac{1}{2}{n\choose k}}=3^{{n-1\choose k-1}}$, since we can freely choose at most one set from each complementary pair of $k$-sets $\left\{ A, [n] \setminus A \right\}$.
\begin{theorem}\label{thm:aa}
There exists a positive constant $C$ such that for $k\ge 2$ and $n \ge 2k + C \sqrt{k \ln k}$, almost all intersecting families in ${[n]\choose k}$ are trivial. In particular, the number of intersecting families in $\binom{[n]}{k}$ is $(n+o(1))2^{\bekr}$, where the term $o(1)$ tends to $0$ as $n\rightarrow \infty$.
\end{theorem}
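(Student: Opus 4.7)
\begin{sk}
\textbf{Proof plan.}

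The plan is to split the intersecting families in $\binom{[n]}{k}$ into trivial ones (contained in some star $\mathcal{S}_i := \{A : i \in A\}$) and non-trivial ones, counting each separately. The trivial families will give the main term $(n+o(1))2^{\binom{n-1}{k-1}}$ via inclusion-exclusion, while the non-trivial families will be shown to contribute $o(2^{\binom{n-1}{k-1}})$ via a link argument.

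For the trivial count, any subfamily of $\mathcal{S}_i$ is intersecting, and by inclusion-exclusion over the $n$ stars (using $|\mathcal{S}_{i_1}\cap\cdots\cap\mathcal{S}_{i_r}| = \binom{n-r}{k-r}$), the number of trivial families equals
\[
\sum_{r=1}^{k}(-1)^{r+1}\binom{n}{r}\,2^{\binom{n-r}{k-r}} \;=\; n\cdot 2^{\binom{n-1}{k-1}} + O\!\left(n^{2}\cdot 2^{\binom{n-2}{k-2}}\right) \;=\; (n+o(1))\cdot 2^{\binom{n-1}{k-1}},
\]
since $\binom{n-2}{k-1}=\binom{n-1}{k-1}-\binom{n-2}{k-2}$ tends to infinity as $n \to \infty$.

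For the non-trivial families $\mathcal{G}$, we write each $\mathcal{F} \in \mathcal{G}$ as $\mathcal{F}_i\sqcup \overline{\mathcal{F}_i}$, where $\mathcal{F}_i = \mathcal{F}\cap\mathcal{S}_i$ and $\overline{\mathcal{F}_i} \neq \emptyset$ for every $i$ (by non-triviality). The intersecting property forces $\mathcal{F}_i$ into the \emph{link} $L_i(E) := \{A\in\mathcal{S}_i : A\cap B \neq \emptyset\ \forall B\in E\}$ where $E = \overline{\mathcal{F}_i}$. Counting each non-trivial family once per $i\in[n]$ and using symmetry yields $|\mathcal{G}|\le \sum_{E} 2^{|L_1(E)|}$, with $E$ ranging over nonempty intersecting subfamilies of $\binom{[n]\setminus\{1\}}{k}$. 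The key technical step bounds $|L_1(E)|$ from above: writing $X_j = \{A\in\mathcal{S}_1: A\cap B_j = \emptyset\}$, one has $|X_j|=\binom{n-k-1}{k-1}$ and $|X_j \cap X_{j'}| = \binom{n-1-|B_j\cup B_{j'}|}{k-1}$, so for generic $E = \{B_1,\ldots,B_t\}$ with small pairwise overlaps, the higher-order inclusion-exclusion terms vanish (indeed $\binom{n-2k}{k-1} = 0$ once $n-2k < k-1$), giving $|L_1(E)| = \binom{n-1}{k-1} - t\binom{n-k-1}{k-1}$. Combining with the bound $\binom{n-1}{k}^t/t!$ on the number of intersecting $t$-families, the sum over $E$ yields
\[
|\mathcal{G}|\;\lesssim\; 2^{\binom{n-1}{k-1}}\cdot \left(\exp\!\left(\binom{n-1}{k}\cdot 2^{-\binom{n-k-1}{k-1}}\right) - 1\right),
\]
which is $o(2^{\binom{n-1}{k-1}})$ whenever $\binom{n-k-1}{k-1}\gg \log_{2}\binom{n-1}{k} = O(k)$.

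\emph{Main obstacle.} The preceding estimate of $|L_1(E)|$ is only valid for generic $E$; non-generic configurations with some pair $B_j, B_{j'}$ having large overlap produce larger links and need to be handled by a finer combinatorial analysis---bounding the count of intersecting $t$-families with prescribed overlap patterns and weighing them against the corresponding link sizes. The hypothesis $n\ge 2k + C\sqrt{k\ln k}$ (with $C$ sufficiently large) provides the necessary margin in the exponent $\binom{n-k-1}{k-1} = \binom{k-1+(n-2k)}{n-2k}$ to absorb all non-generic contributions simultaneously and conclude $|\mathcal{G}| = o(2^{\binom{n-1}{k-1}})$, which combined with the trivial count proves the theorem.
\end{sk}
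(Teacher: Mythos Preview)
Your framework is sound and at a high level parallels the paper's: encode a non-trivial family $\mathcal{F}$ by its part $E=\mathcal{F}\setminus\mathcal{S}_1$ outside a star, and use that the inside part must lie in the link $L_1(E)$. But the step you label the ``main obstacle'' is in fact the entire content of the theorem, and your sketch does not address it. The estimate $|L_1(E)|\le\binom{n-1}{k-1}-t\binom{n-k-1}{k-1}$ fails badly for non-generic $E$: if $E$ is a sunflower with a $(k-1)$-element core, then $|L_1(E)|$ stays near $\binom{n-1}{k-1}-\binom{n-k-1}{k-1}$ even as $t$ grows, and such $E$ are plentiful enough that the sum $\sum_E 2^{|L_1(E)|}$ is not controlled by your generic bound. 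Your proposal to ``bound the count of intersecting $t$-families with prescribed overlap patterns and weigh them against the corresponding link sizes'' is precisely the quantitative tradeoff that needs to be \emph{proved}, and the claim that the hypothesis $n\ge 2k+C\sqrt{k\ln k}$ ``provides the necessary margin'' is an assertion, not an argument.

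The paper supplies exactly this missing tradeoff via two tools you do not invoke. First, it restricts to \emph{maximal} non-trivial families, so that $\mathcal{F}\cap\mathcal{S}_1=L_1(E)$ exactly and $\mathcal{F}$ is determined by $E$; it then suffices to bound, for each $\ell$, the number $N_\ell$ of maximal families of size $\binom{n-1}{k-1}-\ell$. Second, and decisively, writing $t=|E|$ it observes that $\mathcal{S}_1\setminus L_1(E)$ is (after deleting the centre) the $(k-1)$-shadow of the complements of the sets in $E$, and applies the Kruskal--Katona theorem to force $\ell+t\ge\binom{x}{k-1}$ when $t=\binom{x}{k+s-1}$. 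Combined with the removal lemma (Lemma~\ref{lem:setremoval}), which gives a coarse a~priori bound $t\le C'n\ell$ and hence caps $x$, this yields $\ell\ge 2nt$, whence $N_\ell\le n\binom{n-1}{k}^{\ell/(2n)}$ and the sum $\sum_\ell N_\ell 2^{-\ell}$ is $o(1)$. Without a shadow-type inequality of this strength, there is no mechanism in your outline that makes $|L_1(E)|$ shrink with $|E|$ uniformly over all intersecting $E$.
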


\noindent\textbf{Remark:}
	During the preparation of this paper, Theorem~\ref{thm:aa} (with a superior constant $C=2$) was proven independently by Frankl and Kupavskii~\cite{fk17-counting} using different methods.

\medskip

\noindent\textbf{Outline and notation.} The rest of the paper is organised as follows. We discuss families of permutations in Section~\ref{sec-perm}, in particular proving the supersaturation result of  Theorem~\ref{thm-supsat-perm}.  Section~\ref{sec:uniform} is devoted to supersaturation for set families and the proof of Theorem~\ref{thm:setsupersat}. In Section~\ref{sec:aa}, we address the typical structure of families, proving Theorems~\ref{thm:no-s-disj} and~\ref{thm:aa}. Section~\ref{sec:concluding} contains some concluding remarks, including a counterexample to the Bollob\'as--Leader conjecture.

We use standard set-theoretic and asymptotic notation. We write $\binom{X}{k}$ for the family of all $k$-element subsets of a set $X$. Given two functions $f$ and $g$ of some underlining parameter $n$, if $\lim_{n\rightarrow \infty}f(n)/g(n)=0$, we write $f=o(g)$. For $a,b,c\in\bR_{+}$, we write $a=b\pm c$ if $b-c\le a\le b+c$.

\section{Supersaturation for families of permutations}\label{sec-perm}
In this section, we study the supersaturation problem concerning the number of disjoint pairs in a family of permutations. Our main tool is a removal lemma for disjoint pairs of permutations, showing that families with relatively few disjoint pairs are close to unions of cosets. We start by collecting some basic facts.

\subsection{The derangement graph} \label{subsec:derangement}
Let $S_n$ be the symmetric group on $[n]$. A permutation $\tau \in S_n$ is called a \emph{derangement} if $\tau(i)\neq i$ for every $i\in[n]$. Let $\mc{D}_n$ be the set of all derangements in $S_n$. Denote by $\Ga_n$ the \emph{derangement graph} on $S_n$, that is, $\sigma \sim \pi$ if $\sigma\cdot \tau=\pi$ for some $\tau\in \mc{D}_n$. In other words, $\sigma$ and $\tau$ are adjacent in $\Ga_n$ if and only if they are disjoint.

We denote by $d_n$ the number of derangements in $S_n$. By construction, $\Ga_n$ is a $d_n$-regular graph. A standard application of the inclusion--exclusion principle shows
\[ d_n = \card{ \mc D_n } = n! \sum_{i=0}^{n} \frac{(-1)^i}{i!} \sim \frac{n!}{e}.\]
We also introduce the notation $D_n = d_n + d_{n-1}$, which we will use to keep track of disjoint pairs in certain subgraphs of the derangement graph.

For instance, consider the subgraph of $\Ga_n$ induced by two disjoint cosets $\mc{T}_{(i_1,j)}$ and $\mc{T}_{(i_2,j)}$. Since the cosets are intersecting families, they are independent sets in $\Ga_n$, and so $\Ga_n[\mc T_{(i_1,j)}, \mc T_{(i_2,j)}]$ is bipartite. For any $\sigma \in \mc{T}_{(i_1,j)}$ and any neighbour $\pi = \sigma \cdot \tau$, where $\tau\in\mc{D}_n$, we have $\pi \in \mc{T}_{(i_2, j)}$ if and only if $\tau(i_2)=i_1$. It is straightforward to see that there are $d_{n-2}$ such derangements $\tau$ with $\tau(i_1)=i_2$ and $d_{n-1}$ derangements with $\tau(i_1)\neq i_2$. As a result, every vertex of the bipartite graph has the same degree $d_{n-2}+d_{n-1}=D_{n-1}$.

Repeating this argument for all the cosets of the form $\mc T_{(i',j)}$ gives the recurrence relation
\begin{equation} \label{eqn-dn-recurrence}
d_n = (n-1)D_{n-1} = (n-1)(d_{n-1} + d_{n-2}).
\end{equation}

For our investigation we shall need some information on the spectrum of the derangement graph $\Ga_n$. Since $\Ga_n$ is $d_n$-regular, its largest eigenvalue $\lam_0$ is $d_n$ with constant eigenvector $\vec{\textbf{1}}=\begin{pmatrix} 1 & \hdots & 1 \end{pmatrix}$.  We shall order the eigenvalues $\{\lambda_0, \lambda_1, \hdots, \lambda_{n!-1} \}$ in a (perhaps non-standard) way, so that $d_n = \card{\lambda_0} \ge \card{\lambda_1} \ge \card{\lambda_2} \ge \hdots \ge \card{\lambda_{n!-1}}$.  Rentner~\cite{R07} showed $\lambda_1 = -d_n / (n-1)$, while Ellis~\cite{E12} proved there is some positive constant $K$ such that
\begin{eqnarray}\label{eq-const-K}
	\card{\lambda_2} \le Kd_n/n^2.
\end{eqnarray}
Furthermore, as shown by Ellis, Friedgut and Pilpel~\cite{EFP}, the span of the $\lam_0$- and $\lam_1$-eigenspaces is $U_1=\hbox{span}\{\1_{\mc{T}_{(i,j)}}:i,j \in [n]\}$, the span of the characteristic vectors of the cosets.
\subsection{A removal lemma}

For any integer $s > \frac12 (n-1)!$, there are unique $k \in \mathbb{N}$ and $\ep \in \left( -\frac12, \frac12 \right]$ such that $s = (k+ \ep) (n-1)!$. For this choice of $s$, the family $\mc{T}(n,s)$ from Definition~\ref{def:lexperm} is a subfamily of $S_n$ consisting of $\floor{k+\ep}$ pairwise disjoint cosets and $(k+\ep-\floor{k+\ep})(n-1)!$ permutations from another disjoint coset.  Hence,
\begin{align}\label{est-T}
\disj(\mc{T}(n,s)) &= \left( \binom{\floor{k+\ep}}{2} + \floor{k+\ep}(k + \ep - \floor{k + \ep}) \right) (n-1)! D_{n-1} \notag \\
	&= \left({k\choose 2}+\left(k-\frac{1}{2}\right)\ep+\frac{1}{2}|\ep|\right)(n-1)!D_{n-1},
\end{align}
as $\disj(\mc{F})=e(\Gamma_n[\mc{F}])$, and the bipartite subgraphs of $\Ga_n$ induced by disjoint cosets are $D_{n-1}$-regular.

We will now prove a removal lemma for disjoint pairs of permutations, which states that any family $\mc{F}\subseteq S_n$ of size $s \approx k(n-1)!$ with $\disj(\mc{F})\approx \disj(\mc{T}(n,s))$ must be `close' to a union of $k$ cosets.

\begin{lemma} \label{lem-removal-perm}
There exist positive constants $C$ and $c$ such that the following holds for sufficiently large $n$. Let $1 \le k < n/2$ be an integer, and let $\ep \in \bR$ and $\be\in\bR_{+}$ be such that $\max\{|\ep|,\be\} \le ck$. If $\mc{F} \subseteq S_n$ is a family of size $s = (k + \ep)(n-1)!$ and $\disj(\mc{F}) \le \disj(\mc{T}(n,s))+\be(n-1)!D_{n-1}$, then there is some union $\mc{G}$ of $k$ cosets with the property that
\[
\card{\mc{F}\De\mc{G}}\le Ck^2\left(\frac{1}{n}+\sqrt{\frac{6(|\ep|+\be)}{k}}\right)(n-1)!.
\]
\end{lemma}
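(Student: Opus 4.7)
The plan is to use spectral methods on the derangement graph $\Ga_n$, whose spectrum was recalled in Subsection~\ref{subsec:derangement}. Let $f=\1_{\mc F}$, and write $A$ for the adjacency matrix of $\Ga_n$. Decompose $f = f_0 + f_1 + h$, where $f_0$ and $f_1$ are the orthogonal projections of $f$ onto the $\lam_0 = d_n$- and $\lam_1 = -d_n/(n-1)$-eigenspaces respectively, and $h = f - f_0 - f_1 \in U_1^{\perp}$. Then $\|f_0\|_2^2 = s^2/n!$ and $\|f_1\|_2^2 + \|h\|_2^2 = s - s^2/n!$ by orthogonality. Expanding
\[
2\disj(\mc F) \;=\; \langle f, A f\rangle \;=\; d_n \|f_0\|_2^2 + \lam_1 \|f_1\|_2^2 + \langle h, A h\rangle,
\]
combining with $|\langle h, Ah\rangle| \le |\lam_2|\, \|h\|_2^2 \le (K d_n/n^2) \|h\|_2^2$ from~\eqref{eq-const-K}, and invoking the hypothesis $\disj(\mc F) \le \disj(\mc T(n,s)) + \be (n-1)! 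D_{n-1}$ together with~\eqref{est-T} and~\eqref{eqn-dn-recurrence}, a routine algebraic simplification (the leading $\binom{k}{2}$-terms cancel) yields
\[
\|h\|_2^2 \;\le\; C_1 (|\ep|+\be)(n-1)!.
\]

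Next, I would convert this $L^2$ bound into the claimed combinatorial structure. Set $x_{ij} := |\mc F \cap \mc T_{(i,j)}|$, so that $\sum_j x_{ij} = \sum_i x_{ij} = s$; since $\1_{\mc T_{(i,j)}} \in U_1$, we also have $x_{ij} = \langle \phi, \1_{\mc T_{(i,j)}}\rangle$ with $\phi := f_0 + f_1 = P_{U_1} f$, so the ``profile'' $(x_{ij})$ is determined entirely by the low-frequency part of $f$. For any fixed row $i^* \in [n]$ and $k$-subset $J = \{j_1,\ldots,j_k\} \subseteq [n]$, the union $\mc G := \bigcup_{l=1}^k \mc T_{(i^*, j_l)}$ consists of $k$ pairwise-disjoint cosets, and since $\1_{\mc G} \in U_1$, Pythagoras yields
\[
|\mc F\De\mc G| \;=\; \|f - \1_{\mc G}\|_2^2 \;=\; \|\phi - \1_{\mc G}\|_2^2 + \|h\|_2^2 \;=\; s + k(n-1)! - 2\sum_{l=1}^k x_{i^*, j_l}.
\]
Hence it suffices to find $(i^*, J)$ with $\sum_{j\in J} x_{i^*, j}$ close to $k(n-1)!$. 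I would do this by comparing the observed profile $(x_{ij})$ with that of the extremal family $\mc T(n,s)$, which has one ``spiked'' row with $k$ entries equal to $(n-1)!$ and every other row essentially uniform of size $\approx k(n-2)!$. The $L^2$ proximity of $f$ to $U_1$ should force such a spiked row (or, by row/column symmetry, a spiked column); the identity $\sum_{ij}x_{ij}^2 = ns + \sum_{\pi \ne \pi'\in \mc F} |\{i : \pi(i)=\pi'(i)\}|$, which couples the Frobenius norm of the profile to a quantity controlled by $\disj(\mc F)$, is a natural starting point for the required double-counting/pigeonholing argument.

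Combining the two steps, the symmetric difference decomposes into two error contributions matching the statement: a $k^2 (n-1)!/n$ term arising from the unavoidable spectral slack $|\lam_2| \le K d_n/n^2$ (mirroring the spread $\sim k(n-1)!/n$ of the non-spiked rows in the extremal profile), and a $k^{3/2}\sqrt{|\ep|+\be}\,(n-1)!$ term arising from Cauchy--Schwarz when passing from the $L^2$ bound $\|h\|_2^2 \le C_1 (|\ep|+\be)(n-1)!$ to the $L^{1}$-type quantity $|\mc F\De\mc G|$. The main obstacle, I expect, will be Step~2: the spectral estimate on $\|h\|_2$ is a single scalar, whereas the conclusion requires identifying one particular row (or column) of the $n\times n$ profile on which $\mc F$ concentrates. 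Because the coset indicators $\{\1_{\mc T_{(i,j)}}\}$ are not mutually orthogonal, this row cannot be extracted directly from $\|h\|_2$, and a combinatorial averaging/pigeonholing step is needed, in the spirit of the stability techniques for intersecting families in $S_n$ developed by Ellis and by Ellis--Friedgut--Pilpel.
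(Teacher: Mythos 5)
Your Step~1 — the spectral decomposition $f = f_0 + f_1 + h$, the identities $\|f_0\|_2^2 = s^2/n!$ and $\|f\|_2^2 = s$, the expansion of $\langle f, Af\rangle$ using eigenvalues, and the combination with the hypothesis on $\disj(\mc F)$ and with~\eqref{est-T} and~\eqref{eqn-dn-recurrence} to derive $\|h\|_2^2 = O\bigl((|\ep|+\be)(n-1)!\bigr)$ — is correct and matches the paper's calculation essentially line for line (the paper works with the normalized inner product $\langle f,g\rangle = \frac{1}{n!}\sum f g$, giving $\|f_2\|^2 \le 3(|\ep|+\be)/n$, which is your unnormalized bound divided by $n!$).

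The genuine gap is in Step~2, and you have in fact identified it yourself. Converting the $L^2$ proximity of $f$ to the low-frequency space $U_1$ into the conclusion that $\mc F$ is close to a union of $k$ cosets is \emph{not} an elementary averaging or pigeonholing step; it is precisely the content of the Ellis--Filmus--Friedgut stability theorem (Theorem~\ref{thm-stab} in the paper, taken from~\cite{EFF}). The paper simply verifies that the hypothesis $\ept[(f-f_{U_1})^2] = \|f_2\|^2 \le \frac{6(|\ep|+\be)}{k}\ept[f]$ holds with $\de := \frac{6(|\ep|+\be)}{k} \le \de_0$ (using $|\ep|,\be \le ck$ with $c$ chosen small), applies Theorem~\ref{thm-stab} as a black box, and then rescales. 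Your proposed route of directly analysing the profile $x_{ij} = |\mc F \cap \mc T_{(i,j)}|$ and finding a ``spiked'' row is in effect an attempt to reprove the EFF theorem. Two concrete difficulties: (a) the identity $\sum_{ij} x_{ij}^2 = ns + \sum_{\pi \ne \pi'}|\{i : \pi(i)=\pi'(i)\}|$ is correct, but the right-hand side is a \emph{weighted} count of agreements, not simply $s(s-1) - 2\disj(\mc F)$, so the Frobenius norm of the profile is not directly controlled by $\disj(\mc F)$ in the way the sketch suggests; and (b) as you note yourself, the non-orthogonality of the coset indicators $\{\1_{\mc T_{(i,j)}}\}$ prevents one from reading off a dominant row from $\|h\|_2$. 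Resolving these issues is the technical heart of~\cite{EFF} (which uses considerably more than double counting, including hypercontractive-type machinery), and the proposal does not close this gap. Citing Theorem~\ref{thm-stab} and checking its hypothesis, as the paper does, would turn your Step~1 into a complete proof.
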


In the proof of Lemma \ref{lem-removal-perm} we shall use a stability result due to Ellis, Filmus and Friedgut~\cite[Theorem 1]{EFF}. To state their theorem we need some additional notation. We equip $S_n$ with the uniform distribution.  Then, for any function $f:S_n\rightarrow \bR$, the expected value of $f$ is defined by $\ept[f]=\frac{1}{n!}\sum_{\si\in S_n}f(\si)$. The inner product of two functions $f,g:S_n\rightarrow \bR$ is defined as $\langle f,g \rangle = \ept[fg] = \frac{1}{n!} \sum_{\si \in S_n} f(\si) g(\si)$; this induces the norm $\|f\|=\sqrt{\langle f,f \rangle}$. Given $c>0$, let $\mathrm{round}(c)$ denote the nearest integer to $c$.

\begin{theorem}[Ellis, Filmus and Friedgut]\label{thm-stab}
There exist positive constants $C_0$ and $\de_0$ such that the following holds. Let $\mc{F}$ be a subfamily of $S_n$ with $\card{\mc{F}}=\al(n-1)!$ for some $\al\le n/2$. Let $f=\1_{\mc{F}}$ be the characteristic function of $\mc{F}$ and let $f_{U_1}$ be the orthogonal projection of $f$ onto $U_1$. If $\ept[(f-f_{U_1})^2]=\de\ept[f]$ for some $\de\le\de_0$, then 
\[
\ept[(f-g)^2]\le C_0\al^2(1/n+\de^{1/2})/n,
\]
where $g$ is the characteristic function of a union of $\mathrm{round}(\al)$ cosets of $S_n$.
\end{theorem}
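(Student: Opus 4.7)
The plan is to use spectral information on the derangement graph $\Ga_n$ to convert the hypothesis on $\disj(\mc F)$ into an upper bound on the distance of $f:=\1_{\mc F}$ from $U_1$, and then feed this bound into Theorem~\ref{thm-stab} as a black box. Write $A$ for the adjacency matrix of $\Ga_n$ and use the normalised inner product $\langle f,g\rangle=\ept[fg]$, so that $\disj(\mc F)=\tfrac{n!}{2}\langle Af,f\rangle$. Decompose $f=\ept[f]\,\mathbf{1}+f_1+f_V$, with the three pieces being the orthogonal projections of $f$ onto the $\lam_0$-eigenspace, the $\lam_1$-eigenspace, and $U_1^{\perp}$ respectively; recall $\lam_0=d_n$, $\lam_1=-d_n/(n-1)$, and that $A$ restricted to $U_1^\perp$ has operator norm at most $|\lam_2|\le Kd_n/n^2$.

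The first main step is a spectral lower bound on $\disj(\mc F)$. Starting from
$\langle Af,f\rangle=d_n\ept[f]^2-\tfrac{d_n}{n-1}\|f_1\|^2+\langle Af_V,f_V\rangle$,
bounding $\langle Af_V,f_V\rangle\ge -|\lam_2|\|f_V\|^2$, substituting $\|f_1\|^2=\ept[f]-\ept[f]^2-\|f_V\|^2$ together with $\ept[f]=(k+\ep)/n$, and simplifying via $d_n=(n-1)D_{n-1}$, one should obtain
\[
\disj(\mc F)\;\ge\;\binom{k+\ep}{2}(n-1)!\,D_{n-1}\;+\;\frac{n!\,d_n}{2(n-1)}\!\left(1-\frac{(n-1)K}{n^2}\right)\|f_V\|^2,
\]
where $\binom{x}{2}:=x(x-1)/2$. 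A direct expansion, compared with~\eqref{est-T}, then shows $\disj(\mc T(n,s))-\binom{k+\ep}{2}(n-1)!D_{n-1}\le\tfrac{|\ep|}{2}(n-1)!D_{n-1}$ for every $\ep$ (in fact the difference equals $\tfrac{\tau(1-\tau)}{2}(n-1)!D_{n-1}$ with $\tau=\{k+\ep\}$, which is bounded by $|\ep|/2$ in every case). Plugging in the hypothesis $\disj(\mc F)\le\disj(\mc T(n,s))+\be(n-1)!D_{n-1}$ and using $d_n=(n-1)D_{n-1}$ once more yields $\|f_V\|^2\le(|\ep|+2\be)(1+o(1))/n$.

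The final step is to invoke Theorem~\ref{thm-stab} with $\al=k+\ep$. The assumptions $|\ep|\le ck$ and $k<n/2$ give $\al\le n/2$ for $c$ small, and $\de:=\ept[(f-f_{U_1})^2]/\ept[f]=n\|f_V\|^2/\al\le(|\ep|+2\be)/(k+\ep)(1+o(1))=O(c)$, which is below $\de_0$ when $c$ is chosen small. The theorem then delivers a union $\mc G'$ of $\mathrm{round}(k+\ep)$ cosets with $\ept[(f-\1_{\mc G'})^2]\le C_0\al^2(1/n+\de^{1/2})/n$; multiplying by $n!$, substituting the bound on $\de$, bounding $\al^2\le(1+c)^2k^2$, and absorbing $(1\pm c)$-factors into a larger constant yields $|\mc F\De\mc G'|\le Ck^2\bigl(1/n+\sqrt{6(|\ep|+\be)/k}\bigr)(n-1)!$. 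If $\mathrm{round}(k+\ep)\ne k$, I obtain $\mc G$ from $\mc G'$ by adding or removing at most $|\ep|+1$ cosets, which contributes at most $(|\ep|+1)(n-1)!$ to the symmetric difference and is comfortably absorbed by the main term after enlarging $C$. The hard part will be the spectral lower bound in the middle paragraph: producing a lower bound on $\disj(\mc F)$ whose leading term matches $\disj(\mc T(n,s))$ to within $O(|\ep|)(n-1)!D_{n-1}$ while simultaneously exposing a positive coefficient of $\|f_V\|^2$ of order $d_n/(n-1)$; this requires delicately exploiting the exact values $\lam_0=d_n$ and $\lam_1=-d_n/(n-1)$, the qualitative separation $|\lam_2|=O(d_n/n^2)$, and the recurrence $d_n=(n-1)D_{n-1}$, so that the small excess appearing in the hypothesis converts cleanly into an $O((|\ep|+\be)/n)$ bound on $\|f_V\|^2$.
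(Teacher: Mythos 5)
Your proposal does not prove the statement at hand. The statement is Theorem~\ref{thm-stab}, the quasi-stability theorem of Ellis, Filmus and Friedgut: a purely analytic assertion that any Boolean function on $S_n$ of measure $\al/n$ whose $L^2$-distance from $U_1$ is small (relative to $\ept[f]$) must be close to the characteristic function of a union of $\mathrm{round}(\al)$ cosets. Your argument never engages with this. Instead you take Theorem~\ref{thm-stab} ``as a black box'' and combine it with the spectral decomposition of the derangement graph ($\lam_0=d_n$, $\lam_1=-d_n/(n-1)$, $\card{\lam_2}\le Kd_n/n^2$), the recurrence $d_n=(n-1)D_{n-1}$, and the hypothesis $\disj(\mc F)\le \disj(\mc T(n,s))+\be(n-1)!D_{n-1}$ to bound $\|f_V\|^2$ and extract a nearby union of cosets. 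That is precisely the paper's proof of Lemma~\ref{lem-removal-perm}, the removal lemma, which in this paper is \emph{derived from} Theorem~\ref{thm-stab}. As a proof of Theorem~\ref{thm-stab} itself your argument is circular: the conclusion you are asked to establish is invoked verbatim as the key step.

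The missing content is the entire substance of the EFF theorem. Establishing it requires tools of a different nature from anything in your sketch: non-Abelian Fourier analysis / representation theory of $S_n$, a structural analysis of Boolean functions whose Fourier mass is concentrated on the trivial and standard representations (the space $U_1=\mathrm{span}\{\1_{\mc T_{(i,j)}}\}$), and a rounding argument showing that an approximate linear combination of coset indicators with Boolean values must essentially coincide with an exact union of $\mathrm{round}(\al)$ cosets, with the quantitative loss $C_0\al^2(1/n+\de^{1/2})/n$. The graph-spectral facts you use (eigenvalues of $\Ga_n$, the identity $d_n=(n-1)D_{n-1}$, the supersaturation hypothesis on $\disj(\mc F)$) do not appear in, and cannot substitute for, that analysis; indeed Theorem~\ref{thm-stab} makes no reference to disjoint pairs at all. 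If your goal was the removal lemma, your outline is essentially the paper's own derivation and is sound modulo routine estimates; but as a proof of the stated theorem it proves nothing, and you would need to either reproduce the argument of Ellis--Filmus--Friedgut or cite it as an external result, as the paper does.
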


We now derive the removal lemma from Theorem \ref{thm-stab}.

\begin{proof}[Proof of Lemma \ref{lem-removal-perm}]	
Set $c=\min\{\frac{\de_0}{12},\frac12\}$ and $C=3C_0$, where $\de_0$ and $C_0$ are the positive constants from Theorem~\ref{thm-stab}. Let $f$ be the characteristic vector of $\mc{F}$. Write $f=f_0+f_1+f_2$, where $f_i$ is the projection of $f$ onto the $\lam_i$-eigenspace for $i=0,1$. By the orthogonality of the eigenspaces, 
\begin{eqnarray}\label{eq-orth}
	\|f\|^2=\|f_0\|^2+\|f_1\|^2+\|f_2\|^2.
\end{eqnarray}
Since $f$ is Boolean, 
\begin{equation}
\label{eq-ff0}
\|f\|^2=\ept[f^2]=\ept[f]=\frac{\card{\mc{F}}}{n!}=\frac{k+\ep}{n}\quad \mbox{and }\quad \|f_0\|^2=\langle f,\vec{\textbf{1}}\rangle^2 =\ept[f]^2=\left(\frac{k+\ep}{n}\right)^2.
\end{equation}
Let $A$ be the adjacency matrix of the derangement graph $\Ga_n$. Then
\[
2\disj(\mc{F})=2e(\Ga_n[\mc{F}])=f^TAf=\sum_{i=0}^2f_i^TAf_i\stackrel{(\ref{eq-const-K})}{\ge}\lam_0f_0^Tf_0+\lam_1f_1^Tf_1-\frac{Kd_n}{n^2}f_2^Tf_2.
\]
Dividing both sides by $n!$, we obtain the following inequalities when $n \ge 4K$:
\begin{eqnarray}\label{eq-dp-l}
     \frac{2 \disj(\mc{F})}{n!} &\ge& \lam_0 \| f_0 \|^2 + \lam_1 \| f_1 \|^2 - \frac{Kd_n}{n^2} \| f_2 \|^2 \nonumber\\
     &\stackrel{\eqref{eq-orth}}{=}& \lam_0 \| f_0 \|^2 + \lam_1 (\| f \|^2-\| f_0 \|^2-\| f_2 \|^2) - \frac{Kd_n}{n^2} \| f_2 \|^2\nonumber\\
     &=& (\lam_0 - \lam_1) \| f_0 \|^2 + \lam_1 \|f\|^2+\left( |\lam_1| - \frac{Kd_n}{n^2} \right) \| f_2 \|^2 \nonumber\\
      &\stackrel{\eqref{eq-ff0}}{\ge}& \frac{nd_n}{n-1}\cdot\left(\frac{k+\ep}{n}\right)^2-\frac{d_n}{n-1}\cdot\left(\frac{k+\ep}{n}\right)+\frac{3d_n}{4(n-1)}\cdot\| f_2 \|^2\nonumber\\
      &=& \frac{d_n}{n(n-1)}\cdot(k+\ep)(k+\ep-1)+\frac{3d_n}{4(n-1)}\cdot\| f_2 \|^2\nonumber\\
      &\stackrel{\eqref{eqn-dn-recurrence}}{=}&\frac{2D_{n-1}}{n}\cdot\left({k\choose 2}+\left(k-\frac{1}{2}\right)\ep+\frac{\ep^2}{2}\right)+\frac{3}{4}D_{n-1}\cdot\| f_2 \|^2.
\end{eqnarray}
On the other hand, by assumption we have
\[
\disj(\mc{F})\le \disj(\mc{T}(n,s))+\beta(n-1)!D_{n-1}\stackrel{\eqref{est-T}}{=}\left({k\choose 2}+\left(k-\frac{1}{2}\right)\ep+\frac{1}{2}|\ep|+\beta\right)(n-1)!D_{n-1}.
\]
Combined with~\eqref{eq-dp-l}, we get
\begin{eqnarray}\label{eq-f2}
	 \| f_2 \|^2 \le \frac{4}{3}\cdot \frac{|\ep|+2\be-\ep^2}{n}\le  \frac{3(|\ep|+\be)}{n}.
\end{eqnarray}
Moreover, $\ept[f]=\frac{k+\ep}{n} \ge \frac{k}{2n}$, as $|\ep| \le ck \le \frac{k}{2}$. Therefore, 
\[
\ept[(f-f_{U_1})^2]=\ept[(f-f_0-f_1)^2]=\|f_2\|^2 \le \frac{6(|\ep|+\be)}{k}\cdot\ept[f].
\]
Since $\frac{6(|\ep|+\be)}{k} \le 12c \le \de_0$, we may apply Theorem~\ref{thm-stab} to conclude that there exists a union $\mc{G}$ of $k$ cosets in $S_n$ such that
\[
	\ept[(f-\1_{\mc{G}})^2] \le \frac{C_0 (k+\ep)^2}{n} \left( \frac{1}{n} +\sqrt{\frac{6(|\ep|+\be)}{k}} \right)\le \frac{C k^2}{n}\left(\frac{1}{n}+\sqrt{\frac{6(|\ep|+\be)}{k}}\right).
\]
This gives $\card{\mc{F} \De \mc{G}}=\ept[(f-\1_{\mc{G}})^2]\cdot n! \le Ck^2\left(\frac{1}{n}+\sqrt{\frac{6(|\ep|+\be)}{k}}\right)(n-1)!$, completing our proof.
\end{proof}

We will use this removal lemma to prove Theorem~\ref{thm-supsat-perm} (a supersaturation result for disjoint pairs in $S_n$) in Subsection~\ref{sec:supsat-perm}.
However, from the proof above we can immediately deduce that for any\footnote{In the proof of Lemma~\ref{lem-removal-perm}, we used that $n$ was sufficiently large to bound $\frac{K d_n}{n^2} \| f_2 \|^2$.  However, in Proposition~\ref{claim:largeexact}, we have $\| f_2 \|^2 = 0$, and so do not require $n$ to be large.} $1 \le k \le n$, the union of $k$ pairwise disjoint cosets minimises the number of disjoint pairs among all families of $k(n-1)!$ permutations.

\begin{prop} \label{claim:largeexact}
For any positive integers $1 \le k \le n$, the family $\mc T = \cup_{j=1}^{k}\mc{T}_{(1,j)}$ minimises the number of disjoint pairs over all families $\mc{F} \subseteq S_n$ of size $k(n-1)!$.
\end{prop}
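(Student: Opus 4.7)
The plan is to specialise the spectral computation from the proof of Lemma~\ref{lem-removal-perm} to $\ep = 0$ and observe that in this case the bound does not require $n$ to be large. Let $f = \1_{\mc F}$, let $A$ denote the adjacency matrix of $\Gamma_n$, and decompose $f = \sum_{i \ge 0} f_i$ in the eigenbasis of $A$. Since $\disj(\mc F) = e(\Gamma_n[\mc F]) = \tfrac{1}{2} f^{T} A f$ and the eigenspaces are orthogonal,
\[
\frac{2\,\disj(\mc F)}{n!} \;=\; \sum_{i \ge 0} \lambda_i \|f_i\|^2.
\]

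The first step is to pin down the quantities that depend only on $|\mc F|$. Exactly as in~\eqref{eq-ff0}, $\|f\|^2 = \ept[f] = k/n$ and $\|f_0\|^2 = \ept[f]^2 = k^2/n^2$. The key observation is that, since the eigenvalues are ordered by decreasing absolute value and $\lambda_1 = -d_n/(n-1)$ is negative, every $\lambda_i$ with $i \ge 1$ satisfies $\lambda_i \ge -|\lambda_i| \ge -|\lambda_1| = \lambda_1$. Therefore
\[
\sum_{i \ge 0} \lambda_i \|f_i\|^2 \;\ge\; \lambda_0 \|f_0\|^2 + \lambda_1 \sum_{i \ge 1} \|f_i\|^2 \;=\; (\lambda_0 - \lambda_1)\|f_0\|^2 + \lambda_1 \|f\|^2.
\]
Substituting $\lambda_0 = d_n$, $\lambda_1 = -d_n/(n-1)$, and the values of $\|f_0\|^2$ and $\|f\|^2$, then applying the recurrence $d_n = (n-1)D_{n-1}$ from~\eqref{eqn-dn-recurrence}, the right-hand side simplifies to $k(k-1)D_{n-1}/n$.

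Finally, I would verify that this matches $2\,\disj(\mc T)/n!$. Since $\mc T$ is a union of $k$ pairwise disjoint cosets and the bipartite subgraph of $\Gamma_n$ induced by any two disjoint cosets is $D_{n-1}$-regular with $(n-1)!$ vertices per side (as recalled in Subsection~\ref{subsec:derangement}), a direct count gives $\disj(\mc T) = \binom{k}{2}(n-1)!\,D_{n-1} = \tfrac{n!}{2} \cdot \tfrac{k(k-1)D_{n-1}}{n}$, exactly as needed. I do not foresee any substantive obstacle: the only place the proof of Lemma~\ref{lem-removal-perm} invoked the bound $|\lambda_2| \le K d_n/n^2$ (and consequently required $n$ to be large) was to make the coefficient of $\|f_2\|^2$ comfortably positive in~\eqref{eq-dp-l}, whereas here we need only the trivial fact that $\lambda_i \ge \lambda_1$ for every $i \ge 1$. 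The spectral inequality above is tight precisely when $f_i = 0$ for all $i \ge 2$, i.e.\ when $f \in U_1$, which is consistent with $\mc T$ being a union of cosets and therefore achieving equality.
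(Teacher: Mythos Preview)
Your proof is correct and follows the same spectral approach as the paper, using the eigenvalue decomposition of $f$ and the computation in~\eqref{eq-dp-l}. The one difference is minor but worth noting: the paper first takes $\mc F$ to be extremal, so that $\disj(\mc F)\le\disj(\mc T)$, and feeds this upper bound into~\eqref{eq-f2} with $\ep=\be=0$ to deduce $\|f_2\|^2=0$; only then does it read off the lower bound from~\eqref{eq-dp-l}. Your argument bypasses the extremality assumption entirely by using the trivial inequality $\lambda_i\ge\lambda_1$ for all $i\ge1$ (the standard Hoffman-type bound), which immediately gives $\disj(\mc F)\ge\disj(\mc T)$ for \emph{every} family $\mc F$ of the given size. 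This is a cleaner route to the same conclusion and makes the independence from the bound $|\lambda_2|\le Kd_n/n^2$ more transparent.
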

\begin{proof}
Let $\mc{F}\subseteq S_n$ be an extremal family of size $k(n-1)!$, and let $f=\1_{\mc{F}}$. By~\eqref{est-T}, we must have $\disj(\mc F) \le \disj(\mc T) = \binom{k}{2} (n-1)! D_{n-1}$.  Hence, as in the proof of Lemma~\ref{lem-removal-perm}, we can use~\eqref{eq-f2} with $\ep=\be=0$, and so $\|f_2\|^2=0$. It follows from~\eqref{eq-dp-l} that $\disj(\mc{F}) \ge \binom{k}{2}(n-1)!D_{n-1} = \disj(\mc T)$, showing that $\mc T$ minimises the number of disjoint pairs.
\end{proof}


\subsection{Intersection graphs}

The removal lemma states that families with relatively few disjoint pairs must be close to unions of cosets.  While this describes their large-scale structure, it falls short of determining the finer details of such families.  As we have observed previously, certain pairs of cosets are disjoint, while other pairs share a small number of permutations.  In order to keep track of this information, we introduce the notion of an intersection graph.

Given a union $\mc G = \mc T_{(i_1, j_1)} \cup \hdots \cup \mc T_{(i_k,j_k)}$ of $k$ different cosets in $S_n$, its \emph{intersecting graph} is the graph with vertex set $\{(i_1, j_1), \hdots, (i_k, j_k)\}$ and edges between pairs corresponding to cosets with non-empty intersection.  As remarked before Definition~\ref{def:lexperm}, we therefore have $(i,j) \sim (i',j')$ if and only if $i \neq i'$ and $j \neq j'$; that is, when these vertices do not lie on an axis-aligned line in $\mathbb{Z}^2$.

For Theorem~\ref{thm-supsat-perm}, we need to show that pairwise disjoint cosets minimise the number of disjoint pairs.  To that end, we call a union $\mc G$ of $k$ cosets \emph{canonical} if at least $k - 1$ of its cosets are pairwise disjoint.  In terms of the intersection graph $G$ of $\mc G$, this means there is an axis-aligned line containing at least $v(G) - 1$ vertices.  For example, when $s = k (n-1)!$, the lexicographic family $\mc T(n,s)$ is canonical, as all the vertices $(i,j)$ of its intersection graph lie on the line $i = 1$.

Our next proposition, central to the proof of Theorem~\ref{thm-supsat-perm}, describes how the intersection graph of a union $\mc G$ of cosets can be used to bound the size of $\mc G$ and the number of disjoint pairs it is involved in.  For this we require some further notation.  Given a graph $G$ and an integer $t \ge 1$, we denote by $k_t(G)$ the number of $t$-cliques in $G$.  In particular, we have $k_1(G) = v(G)$ and $k_2(G) = e(G)$.  When the graph $G$ is clear from context, we write $k_t$ for $k_t(G)$.

\begin{prop} \label{prop:int-graphs}
There is some $c > 0$ such that if, for $2 \le k_1 \le c n^{1/2}$, $\mc G$ is the union of $k_1$ cosets in $S_n$ with intersection graph $G$, then the following hold:
\begin{enumerate}
	\item[(a)] $\card{\mc G} = k_1 (n-1)! - k_2(n-2)! + k_3(n-3)! \pm k_4 (n-4)!$,
	\item[(b)] $\disj(\pi,\mc G) = k_1 D_{n-1} - k_2 D_{n-2} \pm 3 k_1 k_2 (n-3)!$ for every $\pi \in S_n \setminus \mc G$, 
	\item[(c)] $\disj(\mc G) = \binom{k_1}{2} (n-1)! D_{n-1} - (k_1 - 1) k_2 (n-2)! D_{n-1} \pm 2 k_1^2 k_2 (n-1)!(n-3)!$, and
	\item[(d)] $\disj(\mc G) \ge \disj(\mc T(n,\card{\mc G}))$, with equality if and only if $\mc G$ is canonical.
\end{enumerate}
\end{prop}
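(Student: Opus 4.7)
\noindent\textit{Proof plan.}  My plan is to derive parts (a)--(c) by successive inclusion--exclusion on coset memberships, exploiting $k_1\le cn^{1/2}$ to make higher-order terms geometrically small, and then to deduce (d) by a careful head-to-head comparison with the formula \eqref{est-T} for $\disj(\mc T(n,s))$.

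For (a), a subset $S\subseteq[k_1]$ of cosets has nonempty intersection exactly when $\{(i_r,j_r):r\in S\}$ forms a clique in $G$, in which case the intersection consists of permutations with $|S|$ prescribed values at $|S|$ distinct positions and hence has size $(n-|S|)!$.  Inclusion--exclusion gives $|\mc G|=\sum_{t\ge 1}(-1)^{t+1}k_t(n-t)!$; since $k_{t+1}(n-t-1)!/[k_t(n-t)!]\le k_1/(n-t)=O(n^{-1/2})$, the terms are strictly decreasing and the alternating-series estimate bounds the tail from $t\ge 4$ by $k_4(n-4)!$.  For (b), fix $\pi\notin\mc G$, so $\pi(i_r)\ne j_r$ for every $r$, and expand $\disj(\pi,\mc G)$ analogously.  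The $t=1$ term is exactly $k_1D_{n-1}$ by vertex-transitivity of $\Ga_n$ together with $d_n=(n-1)D_{n-1}$; the $t=2$ term equals $k_2D_{n-2}$ up to a relative $O(1/n)$ error per clique via a short case analysis on overlaps between the two prescribed positions and the two prescribed values; and the $t\ge 3$ contributions are absorbed by the crude bound $\sum_{t\ge 3}k_t(n-t)!\le k_1k_2(n-3)!$ (using $k_3\le k_1k_2/3$ and the same geometric decay).

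For (c), I write $2\disj(\mc G)=\sum_{\sigma\in\mc G}\disj(\sigma,\mc G)$, and for each $\sigma$ apply a (b)-type expansion restricted to cliques of $G-I_\sigma$, where $I_\sigma:=\{r:\sigma\in\mc T_{(i_r,j_r)}\}$ (any $\tau\in\mc T_r$ with $r\in I_\sigma$ satisfies $\tau(i_r)=j_r=\sigma(i_r)$ and so is not disjoint from $\sigma$).  The sum over $\sigma$ collapses using $\sum_\sigma|I_\sigma|=k_1(n-1)!$ and, for each edge $\{r,r'\}$, $|\{\sigma\in\mc G:\sigma\in\mc T_r\cup\mc T_{r'}\}|=2(n-1)!-(n-2)!$.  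The approximation $(n-1)!D_{n-2}=(n-2)!D_{n-1}\bigl(1+O(1/n)\bigr)$, which follows from $D_{n-1}=(n-2)D_{n-2}+d_{n-2}$, then allows the $t=1,2$ main contributions to combine into the advertised $(k_1-1)k_2(n-2)!D_{n-1}$, with the remaining corrections swept into the $\pm 2k_1^2k_2(n-1)!(n-3)!$ error.

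The main obstacle is (d), because the error in (c) is of the same order as the gap one must detect.  My plan is to avoid truncating the inclusion--exclusion from (c) and in parallel to expand $\disj(\mc T(n,s))$ by substituting $s=|\mc G|$ from (a) into \eqref{est-T}: since $k_2\le\binom{k_1}{2}\ll n-1$, one has $\lfloor s/(n-1)!\rfloor=k_1-1$ and $r=(n-1-k_2)(n-2)!+\sum_{t\ge 3}(-1)^{t+1}k_t(n-t)!$, so that
\[
  \disj(\mc T(n,s))=\binom{k_1}{2}(n-1)!D_{n-1}-(k_1-1)k_2(n-2)!D_{n-1}+(k_1-1)\sum_{t\ge 3}(-1)^{t+1}k_t(n-t)!\,D_{n-1}.
\]
After cancelling the coinciding main terms, the difference $2\disj(\mc G)-2\disj(\mc T(n,s))$ reorganizes as a sum of two kinds of non-negative contributions: (i) ``large-clique'' corrections proportional to $k_t\bigl[(n-1)!D_{n-t}-(n-t)!D_{n-1}\bigr]$ for $t\ge 3$, non-negative by the recurrence $d_n=(n-1)D_{n-1}$ (for instance, $(n-1)!D_{n-2}-(n-2)!D_{n-1}=(n-2)!\,d_{n-3}\ge 0$); and (ii) ``cross-edge'' corrections from vertices $\sigma$ with $|I_\sigma|\ge 2$, which arise because the values of $\sigma$ at the positions prescribed by $I_\sigma$ lie outside the image sets of the remaining cosets and therefore automatically satisfy the disjointness constraint at those positions, inflating $N(\{r,r'\},\sigma)$ above its generic value $D_{n-2}$ for every edge $\{r,r'\}$ in $G-I_\sigma$.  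When $\mc G$ is canonical, $G$ is a subgraph of a star, so every $k_t$ with $t\ge 3$ vanishes and every $\sigma$ has $I_\sigma$ covering all edges of $G$, eliminating both (i) and (ii) and yielding equality; when $\mc G$ is not canonical, $G$ has matching number at least $2$, producing either a clique of size $\ge 3$ (making (i) strictly positive) or two disjoint edges of $G$ with $\sigma\in\mc G$ sitting on one of them (making (ii) strictly positive), and in either case forcing strict inequality.
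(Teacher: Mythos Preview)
Your plans for parts (a)--(c) are essentially the same as the paper's: Bonferroni truncation of inclusion--exclusion over cliques of the intersection graph, with Observation~A.1 (the analogue of your ``$(n-|S|)!$'' and ``case analysis on overlaps'') providing the main inputs.  These parts are fine.

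Part (d), however, has a genuine gap.  The paper shows that, after matching the two leading orders,
\[
\disj(\mc G)-\disj(\mc T(n,s))=\tfrac12\bigl[k_2(k_2-k_1+1)+i(\bar{P_3},G)-k_3\bigr](n-3)!D_{n-1}\pm O\bigl(k_1^3k_2(n-1)!(n-4)!\bigr),
\]
and the bracketed quantity is \emph{not} manifestly non-negative: it carries the term $-k_3$, and $k_2(k_2-k_1+1)$ is negative for several non-canonical intersection graphs (e.g.\ $2K_2$, $P_4$, $P_5$, $C_4\cup K_1$).  Positivity is secured only by a separate combinatorial lemma about intersection graphs (Claim~A.5 in the paper), with a small-case analysis precisely for those exceptional graphs and the crucial appearance of the induced-$\bar{P_3}$ count.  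Your plan to ``reorganise the difference into two kinds of non-negative contributions'' does not reproduce this structure, and two of your specific assertions fail.  First, the terms of type (i) come with the \emph{alternating} inclusion--exclusion sign, so $k_t[(n-1)!D_{n-t}-(n-t)!D_{n-1}]$ enters with a minus for even $t$ and cannot simply be declared non-negative.  Second, the ``inflation'' claim in (ii) is incorrect: by the paper's Observation~A.1(ii), for an edge $\{(i_r,j_r),(i_{r'},j_{r'})\}$ one has
\[
\disj\bigl(\sigma,\mc T_{(i_r,j_r)}\cap\mc T_{(i_{r'},j_{r'})}\bigr)=d_{n-2}+\bigl(2-|\{i_r,i_{r'}\}\cap\{\sigma^{-1}(j_r),\sigma^{-1}(j_{r'})\}|\bigr)d_{n-3}\pm O\bigl((n-4)!\bigr),
\]
which depends on where $\sigma$ sends $i_r,i_{r'}$, not on $|I_\sigma|$; the ``generic'' value is $D'_{n-2}=d_{n-2}+2d_{n-3}$, not $D_{n-2}$, and belonging to extra cosets does not force it upward.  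Your equality analysis for the canonical case is also off: if $G$ is a star with centre $v$ and $\sigma$ lies in a leaf coset but not in $\mc T_v$, then $I_\sigma$ does \emph{not} cover the edges of $G$ incident to other leaves, so ``every $\sigma$ has $I_\sigma$ covering all edges of $G$'' is false.  To make (d) go through you will need, as the paper does, to track the third-order coefficient exactly (including the $i(\bar{P_3},G)$ term) and then prove the resulting graph inequality for non-canonical intersection graphs.
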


The proof of Proposition~\ref{prop:int-graphs}, though elementary, is rather technical, involving careful and repeated application of the Bonferroni inequalities to estimate the number of permutations in a union of cosets that are disjoint from a given permutation.  We therefore defer the proof to Appendix~\ref{app:int-graphs}, and instead proceed to show how the proposition can be combined with Lemma~\ref{lem-removal-perm} to prove Theorem~\ref{thm-supsat-perm}.

\subsection{Supersaturation} \label{sec:supsat-perm}
Here we prove Theorem~\ref{thm-supsat-perm}. Our strategy is to use Lemma~\ref{lem-removal-perm} to reduce the statement to the case when $\mc{F}$ is a union of some cosets in $S_n$, and then apply Proposition~\ref{prop:int-graphs} to obtain the desired lower bound on the number of disjoint pairs.

\begin{proof}[Proof of Theorem~\ref{thm-supsat-perm}]
Let $c_{\ref{lem-removal-perm}}$ and $C_{\ref{lem-removal-perm}}$ be the positive constants from Lemma~\ref{lem-removal-perm}, and set
\[
c=\min\left\{ c_{\ref{lem-removal-perm}}, \ 10^{-5}C_{\ref{lem-removal-perm}}^{-2},\ 10^{-2}\right\}.
\]

Now letting $n, k$ and $\ep$ be as in the statement of the theorem, let $\mc{F} \subseteq S_n$ be an extremal family of $s=(k+\ep)(n-1)!$ permutations. In the first part of our proof, we establish Claim~\ref{cl-rs}, a rough structural result for $\mc{F}$.

\begin{claim}\label{cl-rs}
Either $\mc{F}$ contains $k$ cosets or $\mc{F}$ is contained in a union of $k$ cosets.
\end{claim}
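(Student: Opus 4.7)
The plan is to exploit extremality of $\mc F$: since $\mc F$ minimises $\disj$ among size-$s$ families, $\disj(\mc F) \le \disj(\mc T(n,s))$, so Lemma~\ref{lem-removal-perm} applies with $\beta = 0$ and produces a union $\mc G = \bigcup_{j=1}^{k}\mc T_j$ of $k$ cosets with
\[
\card{\mc F \De \mc G} \le C_{\ref{lem-removal-perm}} k^2\bigl(1/n + \sqrt{6|\ep|/k}\bigr)(n-1)!.
\]
Using $k \le cn^{1/2}$ (so $k^2/n \le c^2$) and $|\ep| \le ck^{-3}$ (so $k^2\sqrt{6|\ep|/k} \le \sqrt{6c}$), this collapses to $\card{\mc F \De \mc G} \le C_{\ref{lem-removal-perm}}(c^2+\sqrt{6c})(n-1)!$, which by the choice $c \le 10^{-5}C_{\ref{lem-removal-perm}}^{-2}$ is substantially smaller than $D_{n-1} \sim (n-1)!/e$; say $\card{\mc F \De \mc G} \le D_{n-1}/3$.

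Then I would argue by contradiction: assume that both alternatives in the claim fail. The failure of ``$\mc F$ is contained in a union of $k$ cosets'' produces a witness $\pi \in \mc F \setminus \mc G$, while the failure of ``$\mc F$ contains $k$ cosets'' applied to $\mc T_1, \ldots, \mc T_k$ yields some coset $\mc T_i \subseteq \mc G$ with $\mc T_i \setminus \mc F \neq \emptyset$, from which pick $\sigma$. The swapped family $\mc F' := (\mc F \setminus \{\pi\}) \cup \{\sigma\}$ has the same size as $\mc F$, and the goal reduces to showing $\disj(\mc F') < \disj(\mc F)$, contradicting extremality.

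Unwinding, $\disj(\mc F) - \disj(\mc F') = \disj(\pi, \mc F \setminus \{\pi\}) - \disj(\sigma, \mc F \setminus \{\pi\})$, and absorbing the discrepancy $\card{\mc F \De \mc G}$ on both sides reduces matters to
\[
\disj(\pi, \mc G) - \disj(\sigma, \mc G) > \card{\mc F \De \mc G}.
\]
Since $\pi \notin \mc G$, Proposition~\ref{prop:int-graphs}(b) gives $\disj(\pi, \mc G) \ge kD_{n-1} - k_2 D_{n-2} - 3kk_2(n-3)!$; inserting $k_2 \le \binom{k}{2}$, $k^2/n \le c^2$, and $D_{n-2}/D_{n-1} = \Theta(1/n)$ yields $\disj(\pi, \mc G) \ge kD_{n-1}(1 - O(c^2))$. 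For $\sigma \in \mc T_i$, a union bound $\disj(\sigma, \mc G) \le \sum_{j} \disj(\sigma, \mc T_j)$, together with the elementary facts $\disj(\sigma, \mc T_i) = 0$ (as $\mc T_i$ is intersecting) and $\disj(\sigma, \mc T_j) \le D_{n-1}$ for every other coset (by the standard derangement count reviewed in Subsection~\ref{subsec:derangement}), gives $\disj(\sigma, \mc G) \le (k-1)D_{n-1}$. Subtracting, $\disj(\pi, \mc G) - \disj(\sigma, \mc G) \ge D_{n-1}(1 - O(c^2)) \ge D_{n-1}/2$, which comfortably dominates $\card{\mc F \De \mc G}$ and yields the contradiction.

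The main technical obstacle is ensuring the approximation error $\card{\mc F \De \mc G}$ from the removal lemma stays strictly below the gain $D_{n-1} \sim (n-1)!/e$ that the swap yields; this forces the removal bound to sit well below $(n-1)!/e$, which is precisely what dictates the aggressively small choice of the constant $c$ fixed at the outset of the proof of Theorem~\ref{thm-supsat-perm}.
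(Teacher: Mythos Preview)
Your argument is correct and follows essentially the same route as the paper: apply the removal lemma with $\beta=0$ to obtain $\mc G$, then swap an element of $\mc F\setminus\mc G$ with one of $\mc G\setminus\mc F$ to contradict extremality. The only notable difference is that you invoke Proposition~\ref{prop:int-graphs}(b) to lower-bound $\disj(\pi,\mc G)$, whereas the paper uses the more elementary Bonferroni-type bound $\disj(\pi,\mc G)\ge kD_{n-1}-\binom{k}{2}(n-2)!$ directly; both yield the required gap of order $D_{n-1}$ over $\card{\mc F\De\mc G}$.
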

\begin{proof}
Since $|\ep|\le ck^{-3} \le c_{\ref{lem-removal-perm}}$ and  $\disj(\mc{F}) \le \disj(\mc{T}(n,s))$ by the extremality of $\mc{F}$, we may apply Lemma~\ref{lem-removal-perm} to $\mc{F}$ with $\be_{\ref{lem-removal-perm}}=0$ to find a union $\mc{G} = \bigcup_{i=1}^k \mc T_i$ of $k$ cosets in $S_n$ such that 
\begin{eqnarray}\label{eq-sd}
	\card{\mc{F} \De \mc{G}} \le  C_{\ref{lem-removal-perm}}k^2\left(\frac{1}{n}+\sqrt{\frac{6|\ep|}{k}}\right)(n-1)!.
\end{eqnarray}

Let $\mc{A}=\mc{F}\setminus \mc{G}$ and $\mc{B}=\mc{G}\setminus\mc{F}$. We may assume that $\mc{A}\neq \emptyset$ and $\mc{B}\neq \emptyset$, otherwise either $\mc{F}\subseteq \mc{G}$ or $\mc{G}\subseteq \mc{F}$ as claimed. We shall show that if the permutations in $\mc{A}$ are replaced by those in $\mc{B}$, the number of disjoint pairs decreases, which then contradicts the extremality of $\mc{F}$. Fix two arbitrary permutations $\si\in\mc{A}$ and $\pi\in\mc{B}$. It suffices to show that $\disj(\si,\mc{F})>\disj(\pi,\mc{F})$.

First, using~\eqref{eq-sd}, $|\ep|\le ck^{-3}$, $k \le c n^{1/2}$ and $c \le 10^{-5}C_{\ref{lem-removal-perm}}^{-2}$, we see that
\begin{eqnarray}\label{eq-ABs}
	\card{\mc{A}}+\card{\mc{B}}= \card{\mc{F}\De\mc{G}}\le 0.02(n-1)!.
\end{eqnarray}
 Recall that any two cosets in $S_n$ have at most $(n-2)!$ elements in common, and that a permutation is disjoint from $D_{n-1}$ other permutations in any coset not containing it.  Since $D_{n-1}=d_{n-1}+d_{n-2}=(e^{-1}+o(1))(n-1)!$ and $k \le cn^{1/2}\le 10^{-2}n^{1/2}$, we have
\begin{align*}
	\disj(\si,\mc{F})&\ge \disj(\si,\mc{G}\setminus\mc{B})\ge \disj(\si,\mc{G})-\card{\mc{B}}\ge \sum_{i = 1}^{k}\disj(\si,\mc{T}_i)-\sum_{i<j}\card{\mc{T}_{i}\cap \mc{T}_j}-\card{\mc{B}}\\
	&\stackrel{\eqref{eq-ABs}}{\ge} kD_{n-1}-{k\choose 2}(n-2)!-0.02(n-1)!>(k-0.1)D_{n-1}.
\end{align*}
On the other hand, $\pi$ is contained in $\mc G$, and thus can have disjoint pairs to at most $k-1$ of the cosets in $\mc G$.  Hence,
\begin{align*}
\disj(\pi,\mc{F})&=\disj(\pi,\mc{F}\cap \mc{G})+\disj(\pi,\mc{F}\setminus\mc{G})=\disj(\pi,\mc{G}\setminus \mc{B})+\disj(\pi,\mc{A})\\
&\le (k-1)D_{n-1}+\card{\mc{A}}\stackrel{(\ref{eq-ABs})}{<}(k-0.2)D_{n-1}<\disj(\si,\mc{F}).\qedhere
\end{align*}
\end{proof}	

Next, we combine this claim with Proposition~\ref{prop:int-graphs} to bound $\disj(\mc F)$ from below and finish the proof. We consider two cases, depending on the sign of $\ep$.

\noindent {\bf Case 1:} $\ep\le 0$. We have shown in Claim~\ref{cl-rs} that either $\mc{F} \supseteq\mc{G}$ or $\mc{F} \subseteq\mc{G}$, where $\mc{G}$ is a union of some $k$ cosets. Let $t=\card{\mc{G}}$.

We first treat the case $\mc{F} \supseteq\mc{G}$. Observe that since $t \le s$, $\mc T(n,t) \subseteq \mc T(n,s)$, and from Proposition~\ref{prop:int-graphs}(d) we have $\disj(\mc{G})\ge \disj(\mc{T}(n,t))$. Since $s=(k+\ep)(n-1)! \le k(n-1)!$, the family $\mc{T}(n,s)$ is contained in a union of $k$ disjoint cosets in $S_n$. 
Hence $\disj(\pi,\mc{T}(n,s)) \le (k-1)D_{n-1}$ for every $\pi\in\mc{T}(n,s) \setminus \mc{T}(n,t)$, and there are  $\card{\mc{F}\setminus \mc{G}}$ such permutations $\pi$.

Moreover, as $\mc{G}$ is a union of $k$ cosets, we have
\[
\disj(\si,\mc{G}) \ge kD_{n-1}-{k \choose 2} (n-2)!>(k-0.5)D_{n-1}
\] 
for each $\si \in\mc{F}\setminus\mc{G}$, and there are again $\card{\mc{F}\setminus \mc{G}}$ such permutations $\si$. Altogether, we deduce that, as required,
\[ \disj(\mc{F}) \ge \disj(\mc{G}) + \sum_{\si \in \mc F \setminus \mc G} \disj(\si, \mc G) > \disj(\mc T(n,t)) + \sum_{\pi \in \mc T(n,s) \setminus \mc T(n,t)} \disj(\pi, \mc T(n,s)) \ge \disj(\mc{T}(n,s)). \]

We next deal with the case $\mc{F} \subseteq \mc{G}$. It is convenient to think of $\mc{F}$ as a family obtained by removing permutations in $\mc{G}$ one by one. Since $\mc{G}$ is a union of $k$ cosets in $S_n$, the number of disjoint pairs is decreased by at most $(k-1)D_{n-1}$ each time. Following the same process for the family $\mc{T}(n,t)$, we see that the number of disjoint pairs is decreased by exactly $(k-1)D_{n-1}$ each time we remove a permutation from the last coset in $\mc{T}(n,t)$. Moreover, at the beginning of the process, $\disj(\mc{G})\ge \disj(\mc{T}(n,t))$ by Proposition~\ref{prop:int-graphs}(d). Thus 
\[
\disj(\mc{F})\ge \disj(\mc{G})-(t-s)(k-1)D_{n-1}\ge \disj(\mc{T}(n,t))-(t-s)(k-1)D_{n-1}=\disj(\mc{T}(n,s)),
\]
completing the proof in Case 1.

\noindent {\bf Case 2:} $\ep > 0$. This case will be handled rather differently. Since $\ep>0$, formula \eqref{est-T} gives 
\begin{equation}\label{positive-T}
\disj(\mc{T}(n,s))=\left(\binom{k}{2}+k\ep\right)(n-1)!D_{n-1}.
\end{equation}
Also, as $\card{\mc{F}}=(k+\ep)(n-1)!>k(n-1)!$, Claim~\ref{cl-rs} shows that $\mc{F}=\mc{G}\sqcup \mc{H}$, where $\mc{G}$ is a union of $k$ (not necessarily disjoint) cosets in $S_n$.

If $\mc{G}$ is a union of $k$ disjoint cosets, then
\[
\disj(\mc{F})=\disj(\mc{G})+\disj(\mc{H},\mc{G})+\disj(\mc{H}) \ge \disj(\mc{G})+\disj(\mc{H},\mc{G})\\
=\disj(\mc{T}(n,s)),
\]
where equality holds if and only if $\disj(\mc{H})=0$, that is, $\mc{H}$ is intersecting.

It remains to verify that $\disj(\mc{F})\ge \disj(\mc{T}(n,s))$ when the $k$ cosets of $\mc{G}$ are not pairwise disjoint. In this scenario we in fact have a strict inequality. Indeed, let $G$ be the intersection graph of $\mc{G}$. We shall use the inequality $\disj(\mc{F})\ge \disj(\mc{G})+\disj(\mc{H},\mc{G})$ to lower bound $\disj(\mc{F})$. By Proposition~\ref{prop:int-graphs}(c), we have
\begin{equation}\label{positive-G}
\disj(\mc{G})=\binom{k}{2}(n-1)!D_{n-1}-(k-1)k_2(n-2)!D_{n-1}\pm 2k^2k_2 (n-1)!(n-3)!.
\end{equation} 

We next estimate the number of disjoint pairs between $\mc{H}$ and $\mc{G}$. By Proposition~\ref{prop:int-graphs}(b), 
\[\disj(\pi,\mc{G})=kD_{n-1}-k_2 D_{n-2} \pm 3kk_2 (n-3)!\]
for every $\pi \in \mc{H}$.
Furthermore, using Proposition~\ref{prop:int-graphs}(a) to estimate $\card{\mc G}$ gives
\[ \card{\mc{H}}=\card{\mc{F}}-\card{\mc{G}}=\ep (n-1)!+k_2 (n-2)!\pm 2kk_2 (n-3)!.\] 
Therefore, noting that $(n-1)!D_{n-2}=(n-2)!D_{n-1}\pm (n-1)!(n-3)!$, we get
\begin{align}\label{positive-H-G}
\notag\disj(\mc{H},\mc{G})&= \left(kD_{n-1}-k_2 D_{n-2} \pm  3kk_2 (n-3)!\right) \card{\mc{H}}\\
& =k\ep(n-1)!D_{n-1}+(k-\ep)k_2(n-2)!D_{n-1}\pm 3k^2k_2 (n-1)!(n-3)!.
\end{align}
Combining \eqref{positive-T}, \eqref{positive-G} and \eqref{positive-H-G}, and simplifying gives
\[
\disj(\mc{G})+\disj(\mc{H},\mc{G})-\disj(\mc{T}(n,s)) \ge (1-\ep)k_2 (n-2)!D_{n-1} -5k^2k_2 (n-1)!(n-3)!>0,
\]
since $k_2\ge 1$, $k\le cn^{1/2}\le 10^{-2}n^{1/2}$ and $D_{n-1} \ge (n-1)!/3$. Thus $\disj(\mc{F})\ge \disj(\mc{G})+\disj(\mc{H},\mc{G})>\disj(\mc{T}(n,s))$, completing the proof of Theorem \ref{thm-supsat-perm}.
\end{proof}



\section{Supersaturation for uniform set systems}\label{sec:uniform}
In this section, we shall prove Theorem~\ref{thm:setsupersat}, but first let us examine $\disj(n,k,s)$.  When $\bnk - \binom{n-r+1}{k} \le s \le \bnk - \binom{n-r}{k}$, if we write $s = \bnk - \binom{n-r+1}{k} + \ga \binom{n-r}{k-1}$ where $\ga \in [0,1]$, $\mc{L}(n,k,s)$ consists of the full stars with centres in $[r-1]$, with a further $\ga \binom{n-r}{k-1}$ sets from the star with centre $r$.  Let $\mc{L}(i) = \{ L \in \mc{L}(n,k,s) : i \in L \}$ and $\mc{L}^*(i) = \{ L \in \mc{L}(n,k,s): \min L = i \}$.  One can then compute the number of disjoint pairs as
\begin{equation} \label{eqn:setdisjexact}
\disj(n,k,s) = \sum_{i=1}^{r-1} \disj( \cup_{j > i} \mc{L}^*(j), \mc{L}^*(i) ) = \sum_{i = 1}^{r-1} \left( s - \left( \bnk - \binom{n-i}{k} \right) \right) \binom{n-i-k}{k-1}.
\end{equation}
This expression is quite unwieldy, so we shall make use of a few estimates.  We first note that any set outside a star has exactly $\binom{n-k-1}{k-1}$ disjoint pairs with the star, so
\begin{align} \label{ineq:setbound1}
\disj(n,k,s) &\le \sum_{1 \le i <  j \le r-1} \disj(\mc{L}(i), \mc{L}(j)) + \sum_{1 \le i \le r-1} \disj(\mc{L}(i), \mc{L}^*(r)) \notag \\
	&\le \left( \binom{r-1}{2} + (r-1)\ga \right) \bekr \binom{n-k-1}{k-1}.
\end{align}
This is only an upper bound as we overcount disjoint pairs involving sets belonging to multiple stars.  For an even simpler upper bound, observe that every set belongs to at least one of the $r$ stars, and is not disjoint from any other set in its star.  In the worst case, there are an equal number of sets in each star, with each set disjoint from at most a $\left(1 - \frac{1}{r}\right)$-proportion of the family.  We thus have
\begin{equation} \label{ineq:setbound2}
\disj(n,k,s) \le \frac12 \left( 1 - \frac{1}{r} \right) s^2.
\end{equation}

We shall use these upper bounds on the number of disjoint pairs present in any extremal family.

\subsection{Tools}

There are two main tools we use in our proof of Theorem~\ref{thm:setsupersat}: a removal lemma for disjoint pairs, and the expander-mixing lemma applied to the Kneser graph.  Before proving the theorem, we introduce these tools and explain how we shall use them.

\subsubsection{Removal lemma}

Using a result of Filmus~\cite{Filmus}, Das and Tran~\cite[Theorem 1.2]{DT} proved the following removal lemma, showing that large families with few disjoint pairs must be close to a union of stars.

\begin{lemma}[Das and Tran] \label{lem:setremoval}
There is an absolute constant $C > 1$ such that if $n, k$ and $\l$ are positive integers satisfying $n > 2k \l^2$, and $\mc{F} \subset \binom{[n]}{k}$ is a family of size $\card{\mc{F}} = (\l - \al) \bekr$ with at most $\left( \binom{\l}{2} + \be \right) \bekr \binom{n-k-1}{k-1}$ disjoint pairs, where $\max \{ 2 \l \card{\al}, \card{\be} \} \le \frac{n-2k}{(20C)^2 n}$, then there is a family $\mc{S}$ that is the union of $\l$ stars satisfying $$\card{\mc{F} \De \mc{S}} \le C \left( (2 \l - 1) \al + 2 \be \right) \frac{n}{n-2k} \bekr.$$
\end{lemma}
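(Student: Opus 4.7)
The plan is to follow the same blueprint as Lemma~\ref{lem-removal-perm} for permutations, replacing the derangement graph $\Ga_n$ by the Kneser graph $K(n,k)$ on $\binom{[n]}{k}$, and replacing the Ellis--Filmus--Friedgut theorem by a Boolean stability theorem of Filmus for $\binom{[n]}{k}$. Let $A$ be the adjacency matrix of $K(n,k)$, so that $2\,\disj(\mc{F}) = f^T A f$ where $f = \mathbbm{1}_{\mc{F}}$, viewed as a function on $\binom{[n]}{k}$ equipped with the uniform measure. The eigenvalues of $A$ are $\lam_i = (-1)^i \binom{n-k-i}{k-i}$ for $0\le i\le k$; in particular $\lam_0 = \binom{n-k}{k}$ is the valency, $\lam_1 = -\bnkk$, and $|\lam_2| = \binom{n-k-2}{k-2} = \tfrac{k-1}{n-k-1}\bnkk$. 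Crucially, the span $U_1$ of the $\lam_0$- and $\lam_1$-eigenspaces coincides with the span of the star indicators $\{\mathbbm{1}_{\mc{S}_i} : i \in [n]\}$, where $\mc{S}_i = \{A \in \binom{[n]}{k} : i \in A\}$.

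First, I would decompose $f = f_0 + f_1 + f^\perp$, where $f_0, f_1$ are the projections of $f$ onto the top two eigenspaces and $f^\perp = f - f_{U_1}$. Since $f$ is Boolean with $\ept[f] = \card{\mc{F}}/\bnk = (\l-\al)k/n$, orthogonality yields $\|f\|^2 = \ept[f]$ and $\|f_0\|^2 = \ept[f]^2$. Using $\lam_i \ge -|\lam_2|$ for $i\ge 2$ and $\|f_1\|^2 = \|f\|^2 - \|f_0\|^2 - \|f^\perp\|^2$, the spectral expansion gives, analogously to~\eqref{eq-dp-l},
\[
\frac{2\,\disj(\mc{F})}{\bnk} \ge (\lam_0 - \lam_1)\|f_0\|^2 + \lam_1 \|f\|^2 + (|\lam_1| - |\lam_2|)\|f^\perp\|^2.
\]
The first two terms evaluate, after substituting $\card{\mc F} = (\l-\al)\bekr$, to $2\bigl(\binom{\l}{2} + \Theta(\l|\al|)\bigr)\bekr\bnkk/\bnk$. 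Comparing with the hypothesis $\disj(\mc{F}) \le (\binom{\l}{2}+\be)\bekr\bnkk$ and using the identity $|\lam_1| - |\lam_2| = \tfrac{n-2k}{n-k-1}|\lam_1|$ yields a bound of the form
\[
\|f^\perp\|^2 \le C' \cdot \frac{(2\l-1)|\al|+2\be}{\l-\al}\cdot\frac{n-k-1}{n-2k}\cdot\ept[f],
\]
which under the hypothesis $\max\{2\l|\al|,|\be|\}\le\tfrac{n-2k}{(20C)^2 n}$ sits comfortably below Filmus's stability threshold.

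Second, I would apply Filmus's stability theorem for unions of $\l$ stars: since $\ept[(f - f_{U_1})^2] = \|f^\perp\|^2$ is a suitably small multiple of $\ept[f]$, his result produces a union $\mc{S}$ of $\mathrm{round}(\l-\al) = \l$ stars with $\ept[(f - \mathbbm{1}_{\mc{S}})^2]$ bounded by a constant multiple of $\ept[f]\bigl(1/n + \sqrt{\|f^\perp\|^2/\ept[f]}\bigr)$, directly paralleling Theorem~\ref{thm-stab}. Multiplying through by $\bnk = (n/k)\bekr$ converts the $L^2$-distance into $\card{\mc F \De \mc S}$, and substituting the bound on $\|f^\perp\|^2$ from the previous paragraph delivers exactly the target inequality; the factor $\tfrac{n}{n-2k}$ emerges from the square root of the spectral-gap reciprocal $\tfrac{n-k-1}{n-2k}$, absorbed into the absolute constant $C$.

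The main obstacle is the control of the spectral gap $|\lam_1|-|\lam_2|$, whose normalised version $\tfrac{n-2k}{n-k-1}$ degenerates as $n \to 2k$ and is exactly what forces both the $\tfrac{n}{n-2k}$ factor in the conclusion and the hypothesis $n > 2k\l^2$; the latter additionally ensures that $\mathrm{round}(\l-\al) = \l$ and that $|\lam_2|$ is small enough on the scale of $|\lam_1|/\l^2$ for the reconstruction of $\l$ stars to be unambiguous. A secondary technical point is that Filmus's stability statement, in its original formulation, is phrased for indicators close to a single star; the appropriate extension to unions of $\l$ stars (available in~\cite{Filmus}) is essential here, and tracking the constants through this extension is where the absolute constant $C$ in the conclusion is ultimately determined.
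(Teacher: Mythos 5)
This lemma is not proved in the paper at all: it is quoted verbatim from Das and Tran~\cite[Theorem 1.2]{DT}, so the only fair comparison is with their proof, which indeed starts from the same two ingredients you name (the spectrum of the Kneser graph and Filmus's theorem for the slice~\cite{Filmus}). Your spectral step is essentially correct: decomposing $f=\1_{\mc F}$ against the Kneser eigenvalues $\lam_i=(-1)^i\binom{n-k-i}{k-i}$ and using $|\lam_1|-|\lam_2|=\frac{n-2k}{n-k-1}|\lam_1|$ does give $\ept[(f-f_{U_1})^2]\le \frac{(2\l-1)|\al|+2\be}{\l-\al}\cdot\frac{n-k-1}{n-2k}\cdot\ept[f]$, and this is where the factor $\frac{n}{n-2k}$ originates. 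The gap is in the next step. There is no ``Filmus stability theorem for unions of $\l$ stars'': the result in~\cite{Filmus} is a Friedgut--Kalai--Naor theorem for the slice, whose conclusion is that a Boolean function close to the degree-$\le 1$ subspace is close to a function of a \emph{single} coordinate (a constant, a star, or its complement). The extension from this one-coordinate statement to ``close to a union of $\l$ stars'' is precisely the content of the lemma you are asked to prove, and bridging it is the main technical work in~\cite{DT}; asserting that the extension is ``available in~\cite{Filmus}'' makes your argument circular at its crucial point. The genuine analogue you would need is an Ellis--Filmus--Friedgut-type quasi-stability theorem for the slice (the counterpart of Theorem~\ref{thm-stab}), which is neither cited nor known to follow from the FKN statement without substantial additional argument.

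There is also a quantitative mismatch even if one grants such a hypothetical slice analogue of Theorem~\ref{thm-stab}. That route, exactly as in Lemma~\ref{lem-removal-perm}, converts $\ept[(f-f_{U_1})^2]=\de\,\ept[f]$ into a bound on $\card{\mc F\De\mc S}$ involving $\de^{1/2}$ plus an additive $O(\l^2(n-1)!/n)$-type term; translated to the slice this gives $\card{\mc F\De\mc S}\lesssim \left(\tfrac{1}{n}+\sqrt{\tfrac{(2\l-1)|\al|+2\be}{\l}\cdot\tfrac{n}{n-2k}}\right)\l\bekr$, which is a square-root dependence on $(2\l-1)\al+2\be$, not the \emph{linear} dependence asserted in the lemma (note your own final sentence claims the target inequality falls out ``exactly''). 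The linear form is genuinely stronger for small $\al,\be$ and is what the paper actually uses, e.g.\ in Corollary~\ref{cor:setclosetostar}, where $\l=r-1$ is unbounded and a $\sqrt{\l}$-type loss would destroy the bound $\card{\mc F\De\mc S}<\tfrac{1}{200}\bekr$. So even modulo the missing stability input, your outline would prove a weaker statement than the one quoted; recovering the stated bound requires the finer combinatorial analysis carried out in~\cite{DT} rather than a one-shot $L^2$ projection argument.
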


Observe that the bound on the number of disjoint pairs in the lemma is very similar to the upper bound given in~\eqref{ineq:setbound1}.  Thus one may interpret this result as a stability version of our previous calculation: any family with size similar to the union of $r-1$ stars without many more disjoint pairs can be made a union of $r-1$ stars by exchanging only a small number of sets.  Given this stability, it is not difficult to show that the lexicographic ordering is optimal in this range.

\begin{cor} \label{cor:setclosetostar}
There is some constant $c > 0$ such that if $r, k$ and $n$ are positive integers satisfying $n \ge 2c^{-1} k^2r^3$, and $s = \bnk - \binom{n-r+1}{k} + \ga \binom{n-r}{k-1}$, where $\ga \in [0 , \frac{c}{r} ]$, any family $\mc{F} \subset \binom{[n]}{k}$ of size $s$ has $\disj(\mc{F}) \ge \disj(n,k,s)$.
\end{cor}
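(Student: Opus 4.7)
The plan is to apply the removal lemma (Lemma~\ref{lem:setremoval}) with $\l = r-1$ to an extremal $\mc{F}$, deduce that $\mc{F}$ agrees with a union of $r-1$ stars on almost all sets, and then close the small gap using a swap argument together with the Erd\H{o}s--Ko--Rado theorem.

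Let $\mc{F}\subseteq\binom{[n]}{k}$ minimise the number of disjoint pairs among families of size $s$, so $\disj(\mc{F})\le\disj(n,k,s)$. Writing $|\mc{F}| = (\l - \al)\bekr$ with $\l = r-1$, the identity $\bekr - \binom{n-i-1}{k-1} \le \frac{i(k-1)}{n-1}\bekr$ gives $|\al| \le \ga + r^2 k/n$, which is $O(c/r)$ under $n\ge 2c^{-1}k^2 r^3$. Combining $\disj(\mc{F})\le \disj(n,k,s)$ with~\eqref{ineq:setbound1}, we may take $\be = (r-1)\ga\in[0,c]$. For $c$ a sufficiently small absolute constant, both $2\l|\al|$ and $|\be|$ fall below $\frac{n-2k}{(20C)^2 n}$, and Lemma~\ref{lem:setremoval} produces a union $\mc{S}$ of $r-1$ stars with centre set $D\subseteq[n]$ satisfying
\[
\card{\mc{F}\De\mc{S}} \;\le\; C\bigl((2\l-1)\al + 2\be\bigr)\cdot\frac{n}{n-2k}\bekr \;\le\; 6Cc\bekr.
\]

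Next I would show that $\mc{F}$ and $\mc{S}$ are nested. Suppose for contradiction that some $A \in \mc{F}\setminus\mc{S}$ and $B \in \mc{S}\setminus\mc{F}$ exist. Since $A\cap D = \emptyset$ whereas $b := |B\cap D|\ge 1$, a direct count of $k$-sets hitting $D$ and avoiding $A$ (respectively $B$) yields
\[
\disj(A,\mc{S}) - \disj(B,\mc{S}) \;=\; \binom{n-k-r+1+b}{k} - \binom{n-k-r+1}{k} \;\ge\; \binom{n-k-r+1}{k-1},
\]
which is at least $\tfrac{1}{2}\bekr$ since $n\ge 2c^{-1}k^2 r^3$ forces $k(k+r)/n \ll 1$. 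For $c$ small this comfortably exceeds $\card{\mc{F}\De\mc{S}}$, so combining with the trivial estimates $\disj(A,\mc{F}) \ge \disj(A,\mc{S}) - |\mc{S}\setminus\mc{F}|$ and $\disj(B,\mc{F}) \le \disj(B,\mc{S}) + |\mc{F}\setminus\mc{S}|$ we conclude $\disj(A,\mc{F}) > \disj(B,\mc{F})$. Then swapping $A$ for $B$ strictly reduces $\disj(\mc{F})$, contradicting its extremality. Hence $\mc{F}\supseteq\mc{S}$ or $\mc{F}\subseteq\mc{S}$; in the latter case $|\mc{F}| = s \ge \bnk - \binom{n-r+1}{k} = |\mc{S}|$ forces $\mc{F}=\mc{S}$, whence $\ga = 0$ and $\disj(\mc{F}) = \disj(n,k,s)$ by symmetry of $[n]$.

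It remains to treat $\mc{F} = \mc{S}\sqcup\mc{E}$ with $\mc{E}\subseteq\binom{[n]\setminus D}{k}$ of size $\ga\binom{n-r}{k-1}$. Decompose
\[
\disj(\mc{F}) \;=\; \disj(\mc{S}) + \disj(\mc{S},\mc{E}) + \disj(\mc{E}).
\]
The symmetry of $[n]$ makes $\disj(\mc{S})$ and $\disj(E,\mc{S})$ (for any $E$ with $E\cap D=\emptyset$) depend only on $|D|=r-1$, so the first two summands equal their analogues for $\mc{L}(n,k,s)$ (whose centres are $[r-1]$). As $|\mc{E}|\le\binom{n-r}{k-1}$ lies at the Erd\H{o}s--Ko--Rado threshold in the $(n-r+1)$-element ground set $[n]\setminus D$, any intersecting $\mc{E}$ attains the minimum $\disj(\mc{E})=0$; this is precisely what $\mc{L}(n,k,s)$ does, placing all extras in the star centred at $r$. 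Therefore $\disj(\mc{F})\ge \disj(\mc{L}(n,k,s)) = \disj(n,k,s)$, as required. The main obstacle is parameter tuning: the removal-lemma error $\card{\mc{F}\De\mc{S}}$ must be beaten by the swap-gap $\binom{n-k-r+1}{k-1}$ uniformly in $r$, which forces $c$ to be an absolute constant independent of $r$ yet small enough to satisfy Lemma~\ref{lem:setremoval}'s hypotheses and the $6Cc<\tfrac12$ threshold.
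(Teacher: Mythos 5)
Your proof is correct and follows essentially the same route as the paper's: apply the removal lemma with $\ell=r-1$, use a one-set swap argument (exploiting that any $A$ off the star-centres $D$ is disjoint from substantially more of $\mc{S}$ than any $B$ hitting $D$) to upgrade the symmetric-difference bound to containment $\mc{S}\subseteq\mc{F}$, and then observe that $\disj(\mc{S})$ and $\disj(\mc{S},\mc{E})$ are fixed while $\disj(\mc{E})=0$ is attainable, matching $\mc{L}(n,k,s)$. The only cosmetic difference is that you count disjoint pairs with $\mc{S}$ exactly via binomials where the paper counts intersecting pairs, and you should double-check the constants (your bound $|\al|\le\ga+r^2k/n$ yields roughly $3c/2$ rather than $c$, so $c$ must be taken slightly smaller to clear the hypothesis of Lemma~\ref{lem:setremoval}); neither affects the substance.
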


\begin{proof}
Let $C$ be the constant from Lemma~\ref{lem:setremoval} and choose $c = \frac{n-2k}{2(20C)^2 n}$.  For the given range of $s$, the lexicographic initial segment has $r-1$ full stars with one small partial star, so we wish to apply Lemma~\ref{lem:setremoval} with $\l = r-1$.

Let $\mc{F}$ be a subfamily of $\binom{[n]}{k}$ with $s$ sets and the minimum number of disjoint pairs.  Note that $s = \left( \l + \al \right) \bekr$, where $\ga - \frac{kr^2}{2n} \le \al \le \ga$.  In particular, we have $\card{\al} \le \frac{c}{r}$.  By optimality of $\mc{F}$, and our calculation in~\eqref{ineq:setbound1}, we also have $\disj(\mc{F}) \le \disj(n,k,s) \le \left( \binom{\l}{2} + (r-1)\ga\right) \bekr \binom{n-k-1}{k-1}$, and hence take $\be = (r-1)\ga$.

We thus have $\card{\be} = (r-1) \ga \le c < \frac{n-2k}{(20C)^2 n}$ and $2 \l \card{\al} \le 2c = \frac{n-2k}{(20C)^2 n}$, and hence we may apply Lemma~\ref{lem:setremoval}.  This gives a family $\mc{S}$, a union of $\l$ stars, such that 
\[ \card{ \mc{F} \De \mc{S} } \le C \left( (2 \l - 1) \al + 2 \be \right) \frac{n}{n-2k} \bekr \le 4c C \frac{n}{n-2k} \bekr < \frac{1}{200} \bekr. \]

Hence, we know an optimal family $\mc{F}$ must be close to a union of $\l$ stars $\mc{S}$.  We first show that $\mc{S} \subseteq \mc{F}$.  If not, there is some set $F \in \mc{F} \setminus \mc{S}$ in our family, as well as a set $G \in \mc{S} \setminus \mc{F}$ missing from our family (note that $\card{\mc{F}}\ge \card{\mc{S}}$). For each star in $\mc{S}$, there are at most $\bekr - \binom{n-k-1}{k-1} \le \frac{k^2}{n} \bekr$ sets intersecting $F$, and hence $F$ intersects at most $\frac{\l k^2}{n} \bekr$ sets from $\mc{F} \cap \mc{S}$.  Even if $F$ intersects every set in $\mc{F} \setminus \mc{S}$, it can intersect at most $\left( \frac{\l k^2}{n} + \frac{1}{200} \right) \bekr < \frac12 \bekr$ sets in $\mc{F}$.

On the other hand, the set $G$ is in one of the stars of $\mc{S}$, which contains at least $\bekr - \card{ \mc{S} \setminus \mc{F} } \ge \left( 1 - \frac{1}{200} \right) \bekr > \frac23 \bekr$ sets of $\mc{F}$.  Hence replacing $F$ by $G$ in $\mc{F}$ strictly increases the number of intersecting pairs, thus decreasing the number of disjoint pairs, contradicting the optimality of $\mc{F}$.

Thus we have $\mc{S} \subseteq \mc{F}$. Let $\mc{H} = \mc{F} \setminus \mc{S}$.  We then have
\[ \disj(\mc{F}) = \disj(\mc{S}) + \disj(\mc{S}, \mc{H}) + \disj(\mc{H}). \]
Since every set outside a union of $\l$ stars is contained in exactly the same number of disjoint pairs with sets from the stars, the terms $\disj(\mc{S})$ and $\disj(\mc{S}, \mc{H})$ are determined by $\l$ and $s$, and independent of the structure of $\mc{F}$.  It follows that $\disj(\mc{F})$ is minimised precisely when $\disj(\mc{H})$ is minimised.  As $\card{\mc{H}} = \card{\mc{F}} - \card{\mc{S}} = \ga \binom{n-r}{k-1} \le \binom{n-r}{k-1}$, we may take $\mc{H}$ to be an intersecting family, and so $\disj(\mc{H}) = 0$ is possible.  Since in $\mc{L}(n,k,s)$, the set $\mc{H}$ corresponds to the final (intersecting) partial star, it follows that $\mc{L}(n,k,s)$ is optimal, and so $\disj(\mc{F}) \ge \disj(n,k,s)$ for any family $\mc{F}$ of $s$ sets.
\end{proof}

\subsubsection{Expander-mixing lemma}

The second tool we shall use is the expander-mixing lemma\footnote{This also plays a key role in the proof of Lemma~\ref{lem:setremoval}, and so in some sense is the foundation for this entire proof.} of Alon and Chung~\cite{Alon-C}, which relates the spectral gap of a $d$-regular graph to its edge distribution. Since the graph is $d$-regular, its largest eigenvalue is trivially $d$, corresponding to the constant eigenvector. In what follows, an $(n,d,\lam)$-graph is a $d$-regular $n$-vertex graph whose largest non-trivial eigenvalue (in absolute value) is $\lam$.

\begin{lemma}[Alon and Chung] \label{lem:expandermixing}
Let $G$ be an $(n,d,\lam)$-graph, and let $S, T$ be two vertex subsets.  Then
\[ \card{ e(S,T) - \frac{d\card{S} \card{T}}{n} } \le \lam \sqrt{ \card{S} \card{T} }. \]
\end{lemma}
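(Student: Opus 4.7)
The plan is to give the standard spectral proof of the expander-mixing lemma, expanding the characteristic vectors of $S$ and $T$ in an orthonormal eigenbasis of the adjacency matrix and isolating the contribution of the trivial eigenvalue $d$.

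More precisely, let $A$ be the adjacency matrix of $G$, and let $v_0, v_1, \dots, v_{n-1}$ be an orthonormal basis of eigenvectors of $A$ with corresponding eigenvalues $d = \lam_0, \lam_1, \dots, \lam_{n-1}$, ordered so that $|\lam_i| \le \lam$ for every $i \ge 1$. Since $G$ is $d$-regular, we may take $v_0 = \frac{1}{\sqrt{n}} \vec{\one}$. The first step is to observe the identity $e(S,T) = \one_S^T A \one_T$ (where $e(S,T)$ counts ordered pairs $(s,t) \in S \times T$ with $st \in E(G)$, so that edges inside $S \cap T$ are counted twice, matching the convention used in the applications of Section~\ref{sec-perm}).

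Writing $\one_S = \sum_{i=0}^{n-1} \al_i v_i$ and $\one_T = \sum_{i=0}^{n-1} \be_i v_i$, the coefficients on $v_0$ are $\al_0 = \langle \one_S, v_0 \rangle = \card{S}/\sqrt{n}$ and $\be_0 = \card{T}/\sqrt{n}$. Using the orthonormality of the eigenbasis, I would then expand
\[
e(S,T) = \one_S^T A \one_T = \sum_{i=0}^{n-1} \lam_i \al_i \be_i = \frac{d\card{S}\card{T}}{n} + \sum_{i=1}^{n-1} \lam_i \al_i \be_i,
\]
which isolates the expected edge count as the leading term and reduces the problem to bounding the error sum.

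To control the error, I would use $|\lam_i| \le \lam$ for $i \ge 1$ together with the Cauchy--Schwarz inequality to get
\[
\left| \sum_{i=1}^{n-1} \lam_i \al_i \be_i \right| \le \lam \sum_{i=1}^{n-1} |\al_i| |\be_i| \le \lam \left( \sum_{i=1}^{n-1} \al_i^2 \right)^{1/2} \left( \sum_{i=1}^{n-1} \be_i^2 \right)^{1/2}.
\]
Finally, Parseval's identity gives $\sum_{i=0}^{n-1} \al_i^2 = \|\one_S\|^2 = \card{S}$ and likewise for $T$, so dropping nonnegative terms yields $\sum_{i \ge 1} \al_i^2 \le \card{S}$ and $\sum_{i \ge 1} \be_i^2 \le \card{T}$, giving the stated bound $\lam \sqrt{\card{S}\card{T}}$. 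There is no real obstacle here: the entire argument is a short linear-algebraic computation, and the only point requiring mild care is fixing the convention for $e(S,T)$ and verifying that $A$ is symmetric with a real orthonormal eigenbasis, which holds since $G$ is undirected.
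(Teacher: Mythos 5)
Your argument is correct and is exactly the classical spectral proof of the expander-mixing lemma: expand $\1_S$ and $\1_T$ in an orthonormal eigenbasis, peel off the contribution of the trivial eigenvalue $d$, and bound the rest by Cauchy--Schwarz and Parseval. The paper does not prove this lemma itself but cites Alon and Chung, and your proof coincides with the standard argument behind that citation (your weaker bound $\lam\sqrt{\card{S}\card{T}}$, obtained by discarding the $i=0$ terms, is precisely the form stated in the paper), so there is nothing further to add.
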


As we are interested in counting disjoint pairs, we shall apply the expander-mixing lemma to the Kneser graph, where the vertices are sets and edges represent disjoint pairs.  The spectral properties of the Kneser graph were determined by Lov\'asz~\cite{Lovasz}. In particular, the Kneser graph $KG(m,a)$ for $a$-uniform sets over $[m]$ is an $\left(\binom{m}{a}, \binom{m-a}{a}, \binom{m-a-1}{a-1} \right)$-graph.  We shall combine this with Lemma~\ref{lem:expandermixing} to obtain a useful corollary.

\begin{cor} \label{cor:setspectral}
Given $1 \le i < j \le n$, and $k$-uniform families $\mc{F}(i)$ of subsets of $[n]$ containing $i$ and $\mc{F}(j)$ of subsets of $[n]$ containing $j$, 
\[ \disj(\mc{F}(i), \mc{F}(j)) \ge \left(1 - \frac{k^2}{n} \right) \card{\mc{F}(i)}\card{\mc{F}(j)} - \frac{3k}{2n} \left( \card{\mc{F}(i)} + \card{\mc{F}(j)} \right) \bekr. \]
\end{cor}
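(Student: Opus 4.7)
My plan is to reduce the claim to a single application of Lemma~\ref{lem:expandermixing} on the Kneser graph $KG(n-2,k-1)$ of $(k-1)$-subsets of $[n]\setminus\{i,j\}$. The key structural observation is that any disjoint pair $(A,B)$ with $A\in\mc{F}(i)$ and $B\in\mc{F}(j)$ must satisfy $j\notin A$ and $i\notin B$. Writing $\mc{F}'(i)=\{A\in\mc{F}(i):j\notin A\}$ and $\mc{F}'(j)=\{B\in\mc{F}(j):i\notin B\}$, I would begin by noting
\[
\disj(\mc{F}(i),\mc{F}(j))=\disj(\mc{F}'(i),\mc{F}'(j)),
\]
together with the bound $|\mc{F}(i)|-|\mc{F}'(i)|\le\binom{n-2}{k-2}$ (counting $k$-sets containing both $i$ and $j$), and analogously for $\mc{F}'(j)$.

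Next I would delete the element $i$ from every set of $\mc{F}'(i)$ and $j$ from every set of $\mc{F}'(j)$ to obtain $(k-1)$-uniform families $\widetilde{\mc{F}}(i),\widetilde{\mc{F}}(j)\subseteq\binom{[n]\setminus\{i,j\}}{k-1}$. A direct check shows that $(A,B)\mapsto(A\setminus\{i\},B\setminus\{j\})$ is a bijection from disjoint pairs in $\mc{F}'(i)\times\mc{F}'(j)$ onto the ordered pairs $(C,D)\in\widetilde{\mc{F}}(i)\times\widetilde{\mc{F}}(j)$ with $C\cap D=\emptyset$, so the disjoint-pair count is preserved. By a classical result of Lov\'asz, $KG(n-2,k-1)$ is a $\binom{n-k-1}{k-1}$-regular graph on $\binom{n-2}{k-1}$ vertices whose non-trivial eigenvalues are bounded in absolute value by $\binom{n-k-2}{k-2}$. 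Applying Lemma~\ref{lem:expandermixing} to $(\widetilde{\mc{F}}(i),\widetilde{\mc{F}}(j))$ therefore yields
\[
\disj(\widetilde{\mc{F}}(i),\widetilde{\mc{F}}(j))\;\ge\;\frac{\binom{n-k-1}{k-1}}{\binom{n-2}{k-1}}\,|\widetilde{\mc{F}}(i)|\,|\widetilde{\mc{F}}(j)|\;-\;\binom{n-k-2}{k-2}\sqrt{|\widetilde{\mc{F}}(i)|\,|\widetilde{\mc{F}}(j)|}.
\]

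The remainder is routine estimation of three binomial ratios. Expanding the density as $\binom{n-k-1}{k-1}/\binom{n-2}{k-1}=\prod_{t=0}^{k-2}\bigl(1-\tfrac{k-1}{n-2-t}\bigr)$ and using $(1-x)^{k-1}\ge 1-(k-1)x$ gives a lower bound of $1-k^2/n$, provided $n\gtrsim k^2$ (well within the hypothesis of Theorem~\ref{thm:setsupersat}). Applying AM-GM, $\sqrt{|\widetilde{\mc{F}}(i)||\widetilde{\mc{F}}(j)|}\le\tfrac12(|\mc{F}(i)|+|\mc{F}(j)|)$, together with the identity $\binom{n-k-2}{k-2}=\tfrac{k-1}{n-k-1}\binom{n-k-1}{k-1}$ and the crude bound $\binom{n-k-1}{k-1}\le\bekr$, converts the eigenvalue error into a term of order $\tfrac{k}{n}(|\mc{F}(i)|+|\mc{F}(j)|)\bekr$. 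Finally, the discrepancy $|\widetilde{\mc{F}}(i)|\ge|\mc{F}(i)|-\binom{n-2}{k-2}$ contributes an extra error of the same order via $\binom{n-2}{k-2}=\tfrac{k-1}{n-1}\bekr$. Summing the three contributions and being slightly generous with constants produces the claimed additive factor $\tfrac{3k}{2n}$.

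The only real obstacle is the careful bookkeeping of these binomial ratios to land on the explicit constants $1-k^2/n$ and $3k/(2n)$; the structural part of the argument is essentially a one-line reduction to the \emph{non-overlapping} slices of the two stars, after which Lov\'asz's eigenvalue computation and Lemma~\ref{lem:expandermixing} do the real work.
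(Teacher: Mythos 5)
Your proposal follows the paper's proof essentially verbatim: the paper (taking $i=n-1$, $j=n$) defines $\mc{A}=\{F\setminus\{n-1\}:F\in\mc{F}(n-1),\ n\notin F\}$ and $\mc{B}=\{F\setminus\{n\}:F\in\mc{F}(n),\ n-1\notin F\}$, which are exactly your $\widetilde{\mc{F}}(i)$ and $\widetilde{\mc{F}}(j)$, observes $\disj(\mc{F}(n-1),\mc{F}(n))=\disj(\mc{A},\mc{B})$ with $\card{\mc{A}}\ge\card{\mc{F}(n-1)}-\binom{n-2}{k-2}$, applies the expander mixing lemma to $KG(n-2,k-1)$, and then uses the same AM--GM step and the same binomial estimates $\binom{n-k-1}{k-1}\ge(1-k^2/n)\binom{n-2}{k-1}$, $\binom{n-k-2}{k-2}\le\frac{k}{n}\bekr$, $\binom{n-2}{k-2}\le\frac{k}{n}\bekr$ to land on the constants $1-k^2/n$ and $3k/(2n)$. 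The argument is correct and there is no meaningful difference in approach.
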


\begin{proof}
Without loss of generality we assume $i = n-1$ and $j = n$. Let $\mc{A} = \{ F \setminus \{n-1 \} : F \in \mc{F}(n-1), \; n \notin F \}$ and $\mc{B} = \{ F \setminus \{ n \} : F \in \mc{F}(n), \; n-1 \notin F \}$, and observe that $\disj(\mc{F}(n-1), \mc{F}(n)) = \disj(\mc{A}, \mc{B})$.  Furthermore, we have $\mc{A}, \mc{B} \subseteq \binom{[n-2]}{k-1}$, with $\card{\mc{A}} \ge \card{\mc{F}(n-1)} - \binom{n-2}{k-2}$ and $\card{\mc{B}} \ge \card{\mc{F}(n)} - \binom{n-2}{k-2}$.

Since disjoint pairs between $\mc{A}$ and $\mc{B}$ correspond to edges between the corresponding vertex sets in the Kneser graph $KG(n-2,k-1)$, Lemma~\ref{lem:expandermixing} gives
\[ \disj(\mc{F}(n-1), \mc{F}(n)) = \disj(\mc{A},\mc{B}) \ge \frac{\binom{n-k-1}{k-1}}{\binom{n-2}{k-1}} \card{\mc{A}}\card{\mc{B}} - \binom{n-k-2}{k-2} \sqrt{ \card{\mc{A}} \card{\mc{B}}}. \]

We now recall that $\card{\mc{F}(n-1)} - \binom{n-2}{k-2} \le \card{\mc{A}} \le \card{\mc{F}(n-1)}$, with similar bounds holding for $\mc{B}$.  We shall also remove the square root by appealing to the AM-GM inequality.  Also observe that $\binom{n-k-1}{k-1} \ge \left(1 - \frac{k^2}{n} \right) \binom{n-2}{k-1}$ and $\binom{n-k-2}{k-2} \le \frac{k}{n} \bekr$.  Hence $\disj(\mc{F}(n-1), \mc{F}(n))$ is at least
\[ \left(1 - \frac{k^2}{n} \right) \left( \card{\mc{F}(n-1)} - \binom{n-2}{k-2} \right) \left( \card{\mc{F}(n)} - \binom{n-2}{k-2} \right) - \frac{k}{2n} \left( \card{\mc{F}(n-1)} + \card{\mc{F}(n)} \right) \bekr. \]

Noting that $\binom{n-2}{k-2} \le \frac{k}{n} \bekr$, taking the main order term and collecting the negative terms then gives the desired bound.
\end{proof}

\subsection{Proof of Theorem~\ref{thm:setsupersat}}

With the preliminaries in place, we now proceed with the proof of the main theorem.

\begin{proof}[Proof of Theorem~\ref{thm:setsupersat}]
We prove the result by induction on $s$.  For the base case, if $s \le \bnk - \binom{n-1}{k} = \bekr$, then $\mc{L}(n,k,s)$ consists of sets that all contain the element $1$. Hence $\disj(\mc{L}(n,k,s)) = 0$, which is clearly optimal.

For the induction step, we have $s = \bnk - \binom{n-r+1}{k} + \ga \binom{n-r}{k-1}$ for some $r \ge 2$ and $\ga \in (0,1]$.  Letting $c$ be the positive constant from Corollary~\ref{cor:setclosetostar}, if $\ga \in (0, \frac{c}{r}]$, we are done.  Hence we may assume $\ga \in (\frac{c}{r}, 1]$.  Let $\mc{F}$ be a $k$-uniform set family over $[n]$ of size $s$ with the minimum number of disjoint pairs.  In particular, we must have $\disj(\mc{F}) \le \disj(n,k,s)$.

For any set $F \in \mc{F}$, by the induction hypothesis we have $\disj(\mc{F} \setminus \{ F \}) \ge \disj(n,k,s-1)$.  Hence $\disj(\{ F\}, \mc{F}) = \disj(\mc{F}) - \disj(\mc{F} \setminus \{ F \}) \le \disj(n,k,s) - \disj(n,k,s-1)$, where the right-hand side is the number of disjoint pairs involving the last set $L$ added to $\mc{L}(n,k,s)$. The set $L$ is in a star of size $\ga \binom{n-r}{k-1}>\frac{\ga}{2}\bekr$, and hence intersects at least $\frac{\ga}{2} \bekr$ sets in $\mc{L}(n,k,s)$.  Thus it follows that every set $F \in \mc{F}$ must also intersect at least $\frac{\ga}{2} \bekr$ sets in $\mc{F}$.

Now suppose that $\mc{F}$ contains a full star; without loss of generality, assume $\mc{F}(1)$ consists of all $\bekr$ sets containing the element $1$.  Let $\mc{G} = \mc{F} \setminus \mc{F}(1)$.  Since $\mc{F}(1)$ is intersecting, and every set outside $\mc{F}(1)$ has exactly $\binom{n-k-1}{k-1}$ disjoint pairs with sets in $\mc{F}(1)$, we have
\[ \disj(\mc{F}) = \disj(\mc{F}(1), \mc{G}) + \disj(\mc{G}) = \card{\mc{G}} \binom{n-k-1}{k-1} + \disj(\mc{G}). \]

Now $\mc{G}$ is a $k$-uniform set family over $[n] \setminus \{1\}$ of size $s' = s - \bekr$, and so by induction $\disj(\mc{G})$ is minimised by the initial segment of the lexicographic order of size $s'$.  However, adding back the full star $\mc{F}(1)$ gives the initial segment of the lexicographic order of size $s$, and as a result $\disj(\mc{F}) \ge \disj(\mc{L}(n,k,s)) = \disj(n,k,s)$.

Hence we may assume that $\mc{F}$ does not contain any full star.  In particular, this means for any set $F \in \mc{F}$ and element $i \in [n]$, we have the freedom to replace $F$ with some set containing $i$.  We shall use such switching operations to show that $\mc{F}$, like $\mc{L}(n,k,s)$, must have a cover of size $r$, from which the result will easily follow.

Relabel the elements if necessary so that for every $i \in [n]$, $i$ is the vertex of maximum degree in $\mc{F} |_{[n] \setminus [i-1]}$.  Let $\mc{F}^*(i) = \{ F \in \mc{F} : \min F = i \}$ be those sets containing $i$ that do not contain any previous element.  Define
\[ X = \left\{ x \in [n] : \card{\mc{F}^*(x)} \ge \frac{\ga}{4k} \bekr \right\}, \]
and let $\mc{F}_1 = \{ F \in \mc{F} : F \cap X \neq \emptyset \}$ and $\mc{F}_2 = \mc{F} \setminus \mc{F}_1 = \{ F \in \mc{F} : F \cap X = \emptyset \}$.  We shall show that $X$ is a cover for $\mc{F}$ (that is, $\mc{F}_1=\mc{F}$ and $\mc{F}_2 = \emptyset$), but to do so we shall first have to establish a few claims.  The first shows that $X$ cannot be too big.

\begin{claim} \label{clm:Xiskbounded}
$\card{X} \le \frac{4kr}{\ga}$.
\end{claim}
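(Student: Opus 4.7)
The plan is to observe that the subfamilies $\{\mc{F}^*(i)\}_{i \in [n]}$ form a partition of $\mc{F}$, since every $F \in \mc{F}$ has a unique minimum element. Consequently, bounding the sum over $X$ by the full sum over $[n]$ gives
\[
\sum_{x \in X}\card{\mc{F}^*(x)} \le \sum_{i=1}^{n}\card{\mc{F}^*(i)} = \card{\mc{F}} = s.
\]
Next, I would invoke the hypothesis $s \le \bnk - \binom{n-r}{k}$ together with the telescoping identity $\bnk - \binom{n-r}{k} = \sum_{i=1}^{r}\binom{n-i}{k-1} \le r\bekr$, which yields $s \le r\bekr$.

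Combining these two bounds with the defining inequality $\card{\mc{F}^*(x)} \ge \frac{\ga}{4k}\bekr$ for every $x \in X$, I obtain
\[
\card{X}\cdot \frac{\ga}{4k}\bekr \le s \le r\bekr,
\]
and rearranging yields $\card{X} \le \frac{4kr}{\ga}$ as claimed. There is no serious obstacle here: the estimate is a straightforward double-counting argument exploiting the partition structure of the $\mc{F}^*(i)$ together with the size hypothesis on $\mc{F}$.
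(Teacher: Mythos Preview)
Your proof is correct and essentially identical to the paper's. The only cosmetic difference is that the paper observes $\{\mc{F}^*(x):x\in X\}$ partitions $\mc{F}_1$ (so that $\sum_{x\in X}\card{\mc{F}^*(x)}=\card{\mc{F}_1}\le s$), whereas you use the full partition $\{\mc{F}^*(i):i\in[n]\}$ of $\mc{F}$ to reach the same inequality; both routes give the same bound.
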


\begin{proof}
Observe that the families $\{ \mc{F}^*(x) : x \in X \}$ partition $\mc{F}_1$.  Hence we have
\[ r \bekr \ge s = \card{\mc{F}} \ge \card{\mc{F}_1} = \sum_{x \in X} \card{\mc{F}^*(x)} \ge \frac{\ga}{4 k} \bekr \card{X}, \]
from which the claim immediately follows.
\end{proof}

The next claim asserts that every set in $\mc{F}$ must intersect many sets in $\mc{F}_1$.

\begin{claim} \label{clm:largeF1int}
Every set $F \in \mc{F}$ intersects at least $\frac{\ga}{4} \bekr$ sets in $\mc{F}_1$.
\end{claim}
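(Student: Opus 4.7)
The plan is to reduce the claim to a bound on the number of sets of $\mc F_2$ that can meet a given $F$. Right before the definition of $X$, it was established that every $F \in \mc F$ meets at least $\frac{\ga}{2}\bekr$ sets of $\mc F$. Thus it suffices to prove that for every $F \in \mc F$, strictly fewer than $\frac{\ga}{4}\bekr$ sets of $\mc F_2$ meet $F$; subtracting will then give the desired $\frac{\ga}{4}\bekr$ sets in $\mc F_1$. I will estimate this quantity via the standard union bound $\card{\{G \in \mc F_2 : G \cap F \neq \emptyset\}} \le \sum_{y \in F} \deg_{\mc F_2}(y)$.

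The key step is to exploit the labeling of the ground set in order to control $\deg_{\mc F_2}(y)$ for $y$ outside $X$. Let $z_0 = \min([n] \setminus X)$, which exists because Claim~\ref{clm:Xiskbounded} gives $\card{X} \le 4kr/\ga \le 4kr^2/c$, while the hypothesis $n \ge 2c^{-1}k^2 r^3$ forces $\card{X} < n$. Every set $G \in \mc F_2$ avoids $X$ entirely, so in particular $\min G \ge z_0$. By the choice of labeling, $z_0$ has maximum degree in $\mc F|_{[n] \setminus [z_0 - 1]}$, so for every $y \ge z_0$,
\[
\card{\{G \in \mc F : y \in G,\ \min G \ge z_0\}} \le \card{\mc F^*(z_0)} < \frac{\ga}{4k}\bekr,
\]
where the final strict inequality comes from $z_0 \notin X$. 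Since every $G \in \mc F_2$ satisfies $\min G \ge z_0$, I conclude that $\deg_{\mc F_2}(y) < \frac{\ga}{4k}\bekr$ for every $y \notin X$.

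It remains to assemble the pieces. For the chosen $F \in \mc F$, any $y \in F \cap X$ contributes $\deg_{\mc F_2}(y) = 0$ to the union bound, because sets in $\mc F_2$ contain no element of $X$. Therefore the sum collapses to one over $y \in F \setminus X$, and each such term is strictly smaller than $\frac{\ga}{4k}\bekr$, yielding
\[
\card{\{G \in \mc F_2 : G \cap F \neq \emptyset\}} \;\le\; \sum_{y \in F \setminus X} \deg_{\mc F_2}(y) \;<\; \card{F} \cdot \frac{\ga}{4k}\bekr \;\le\; \frac{\ga}{4}\bekr,
\]
which completes the reduction. The main point requiring care, and really the only obstacle, is the choice of the ``pivot'' index $z_0$: one has to recognise that the labeling was set up precisely to control degrees relative to restricted sub-universes, and that $z_0$ is the right index to which this control should be applied.
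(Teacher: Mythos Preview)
Your proof is correct and follows essentially the same approach as the paper: bound the $\mc F_2$-degree of every element by $\frac{\ga}{4k}\bekr$ via the greedy labeling, then apply a union bound over the $k$ elements of $F$ and subtract from the $\frac{\ga}{2}\bekr$ total intersections. Your explicit introduction of the pivot $z_0 = \min([n]\setminus X)$ makes transparent a step the paper leaves somewhat implicit (one minor quibble: the hypothesis in force in this part of the argument is $n \ge Ck^2r^3$ from Theorem~\ref{thm:setsupersat}, not $n \ge 2c^{-1}k^2r^3$, though of course either suffices to ensure $\card{X} < n$).
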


\begin{proof}
First observe that any element $i \in [n]$ is contained in fewer than $\frac{\ga}{4k} \bekr$ sets in $\mc{F}_2$.  Indeed, the elements $x \in X$ have all of their sets in $\mc{F}_1$, and hence have $\mc{F}_2$-degree zero.  Thus the $\mc{F}_2$-degree of any element is its degree in $\mc{F}|_{[n] \setminus X}$.  If the element of largest $\mc{F}_2$-degree was contained in at least $\frac{\ga}{4k} \bekr$ sets from $\mc{F}_2$, then it would have been in $X$, giving a contradiction.

Now recall that every set $F \in \mc{F}$ must intersect at least $\frac{\ga}{2} \bekr$ sets in $\mc{F}$.  The number of sets in $\mc{F}_2$ it can intersect is at most
\[ \sum_{i \in F} \card{\mc{F}_2(i)} \le k \cdot \frac{\ga}{4 k} \bekr = \frac{\ga}{4} \bekr. \]
Hence the remaining $\frac{\ga}{4} \bekr$ intersections must come from sets in $\mc{F}_1$.
\end{proof}

The following claim combines our previous results with the expander-mixing corollary to provide much sharper bounds on the size of $X$.

\begin{claim} \label{clm:Xissmall}
$\card{X} \le \frac{8r}{\ga}$.
\end{claim}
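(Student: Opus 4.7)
I would prove this sharper bound on $|X|$ by combining the extremality of $\mc{F}$ with a lower bound on $\disj(\mc{F})$ that arises from the $L=|X|$ pairwise disjoint subfamilies $\{\mc{F}^*(x) : x \in X\}$ of $\mc{F}$ (they are disjoint because each set has a unique minimum). The key identity is
\[
\disj(\mc{F}) \;\ge\; \disj(\mc{F}_1) \;\ge\; \sum_{\{x,y\}\subseteq X} \disj\bigl(\mc{F}^*(x),\,\mc{F}^*(y)\bigr),
\]
which lets us feed in Corollary~\ref{cor:setspectral} pair by pair. Writing $t_x = |\mc{F}^*(x)|$, $M = \bekr$, and $m = \tfrac{\ga}{4k}M$, the corollary gives
\[
\disj\bigl(\mc{F}^*(x),\mc{F}^*(y)\bigr) \;\ge\; \Bigl(1-\tfrac{k^2}{n}\Bigr) t_x t_y \;-\; \tfrac{3k}{2n}(t_x+t_y)M.
\]
Summing over all $\binom{L}{2}$ pairs in $X$, and using $t_x \ge m$ together with $T := \sum_{x\in X} t_x \le s \le r M$, yields a lower bound of the shape
\[
\disj(\mc{F}) \;\ge\; \Bigl(1-\tfrac{k^2}{n}\Bigr)\!\!\sum_{\{x,y\}\subseteq X}\!\! t_x t_y \;-\; \tfrac{3k(L-1)}{2n}\,T\,M.
\]

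Next, I would bring in the upper bound from the extremality of $\mc{F}$ together with~\eqref{ineq:setbound1}:
\[
\disj(\mc{F}) \;\le\; \disj(n,k,s) \;\le\; \Bigl(\binom{r-1}{2}+(r-1)\ga\Bigr)M\binom{n-k-1}{k-1}.
\]
Crucially, this upper bound and the main term in the lower bound share the common factor $\binom{n-k-1}{k-1} \approx (1-k^2/n)M$, so these eigenvalue-type factors cancel cleanly when the two estimates are combined. The assumption $n\ge Ck^2r^3$ (together with $\ga\ge c/r$) then absorbs the error term $\tfrac{3k(L-1)TM}{2n}$, leaving the clean inequality
\[
\sum_{\{x,y\}\subseteq X} t_x t_y \;\lesssim\; \binom{r}{2}\,M^2.
\]

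The final step is to lower-bound $\sum_{\{x,y\}\subseteq X} t_xt_y$ in terms of $L$. The naive bound $\sum t_xt_y\ge \binom{L}{2}m^2$, arising from $t_x\ge m$, only recovers $L\lesssim kr/\ga$, i.e.\ Claim~\ref{clm:Xiskbounded}. The sharper bound $L\le 8r/\ga$ is obtained by exploiting the additional global constraint $T=\sum_{x\in X} t_x \le s \le r M$ together with the upper bound $t_x\le M$: writing $\sum_{\{x,y\}}t_xt_y = \tfrac{1}{2}(T^2-\sum t_x^2)$ and noting that the extremal configuration minimizing this sum (subject to $m\le t_x\le M$ and $\sum t_x=T$) has $t_x$'s at the endpoints $\{m,M\}$, a direct calculation shows $\sum_{\{x,y\}}t_xt_y \ge \tfrac{1}{2}(LmM - Mm) + \tfrac{1}{2}T(T-M-m)$. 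Plugging this sharper bound into the inequality above and simplifying yields $L\cdot m\cdot M \lesssim r^2 M^2$, i.e.\ $L\le 8r/\ga$. The main obstacle is precisely this last piece of bookkeeping: extracting a linear (rather than quadratic) dependence on $L$ from the lower bound so that the $m = \tfrac{\ga}{4k}M$ term contributes one factor of $1/k$ rather than two, thereby removing the spurious factor of $k$ present in Claim~\ref{clm:Xiskbounded}.
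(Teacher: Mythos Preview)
Your global counting approach has a genuine gap in the final step. After correctly deriving
\[
\sum_{\{x,y\}\subseteq X} t_x t_y \;\lesssim\; \binom{r}{2} M^2,
\]
you try to extract $L \le 8r/\gamma$ from this together with the constraints $m \le t_x \le M$ and $T \le rM$. But this is impossible: take the feasible configuration $t_x = m$ for every $x \in X$. Then $\sum_{\{x,y\}} t_x t_y = \binom{L}{2} m^2$, and the inequality $\binom{L}{2} m^2 \lesssim \binom{r}{2} M^2$ only gives $L \lesssim rM/m = 4kr/\gamma$, which is exactly Claim~\ref{clm:Xiskbounded} again. Your formula $\sum_{\{x,y\}} t_x t_y \ge \tfrac12\bigl(LmM + T(T-M-m)\bigr)$ is correct, but the term $T(T-M-m)$ can be as negative as roughly $-LmM$ when $T$ is near its minimum $Lm$, cancelling the gain. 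And even if you granted yourself $LmM \lesssim r^2 M^2$, this would only yield $L \lesssim r^2 M/m = 4kr^2/\gamma$, not $8r/\gamma$; the arithmetic in your last sentence is off.

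The missing ingredient is precisely what the paper supplies: one needs a much stronger \emph{lower} bound on each individual $t_i = |\mc F^*(i)|$, not just $t_i \ge m = \tfrac{\gamma}{4k}M$. The paper obtains this by a bootstrapping argument that crucially uses Claim~\ref{clm:largeF1int} (itself a consequence of the induction hypothesis). For each $i \in X$, one applies Corollary~\ref{cor:setspectral} to bound $\disj(\mc F^*(i), \mc F_1)$ from below, and then by averaging finds a set $F \in \mc F^*(i)$ with few intersections in $\mc F_1$; comparing this with Claim~\ref{clm:largeF1int} forces $|\mc F^*(i)| \ge \tfrac{\gamma}{8}M$. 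Once every $t_i$ satisfies this improved bound, the simple counting $\sum_i t_i \le rM$ immediately gives $L \le 8r/\gamma$. In short, the sharpening from $4kr/\gamma$ to $8r/\gamma$ comes from improving the pointwise lower bound on $t_i$ by a factor of $k$, not from a cleverer global optimisation.
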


\begin{proof}
For every $i \in X$, we shall estimate $\disj(\mc{F}^*(i), \mc{F}_1)$.  Since $\{ \mc{F}^*(x) : x \in X \}$ is a partition of $\mc{F}_1$, we have $\disj(\mc{F}^*(i), \mc{F}_1) = \sum_{j \in X \setminus \{i\}} \disj(\mc{F}^*(i), \mc{F}^*(j))$.  Applying Corollary~\ref{cor:setspectral}, we get
\begin{align*}
	\disj(\mc{F}^*(i), \mc{F}_1) &= \sum_{j \in X \setminus \{i\}} \disj(\mc{F}^*(i), \mc{F}^*(j)) \\
	&\ge \sum_{j \in X \setminus \{i\}} \left[ \left(1 - \frac{k^2}{n} \right) \card{\mc{F}^*(i)}\card{\mc{F}^*(j)} - \frac{3k}{2n} \left( \card{\mc{F}^*(i)} + \card{\mc{F}^*(j)} \right) \bekr \right] \\
	&\ge \left(1 - \frac{k^2}{n} \right) \left( \card{\mc{F}_1} - \card{\mc{F}^*(i)} \right) \card{\mc{F}^*(i)}  - \frac{3k}{2n} \left( \card{X} \card{\mc{F}^*(i)} + \card{\mc{F}_1} \right) \bekr.
\end{align*}

By averaging, some $F \in \mc{F}^*(i)$ is disjoint from at least 
\[ \left(1 - \frac{k^2}{n} \right) \left( \card{\mc{F}_1} - \card{\mc{F}^*(i)} \right) - \frac{3k}{2n} \left( \card{X} + \frac{\card{\mc{F}_1}}{\card{\mc{F}^*(i)}} \right) \bekr\]
sets in $\mc{F}_1$.  By Claim~\ref{clm:Xiskbounded}, $\card{X} \le \frac{4kr}{\ga}$.  Since $\card{\mc{F}_1} \le s \le r \bekr$, and $\card{\mc{F}^*(i)} \ge \frac{\ga}{4k}\bekr$, we can lower bound this expression by
\begin{align*}
	\disj(\{ F \}, \mc{F}_1 ) &\ge \left(1 - \frac{k^2}{n} \right) \left( \card{\mc{F}_1} - \card{\mc{F}^*(i)} \right) - \frac{12 k^2 r}{\ga n} \bekr.
\end{align*}

Recalling that $\ga \ge \frac{c}{r}$, we find that $F$ intersects at most
\[ \card{\mc{F}_1} - \disj(\{F\}, \mc{F}_1) \le \card{\mc{F}^*(i)} + \frac{k^2}{n} \card{\mc{F}_1} + \frac{12k^2r}{\ga n} \bekr \le \card{\mc{F}^*(i)} + \frac{13 k^2 r^2}{cn} \bekr \]
sets from $\mc{F}_1$.  By Claim~\ref{clm:largeF1int}, this quantity must be at least $\frac{\ga}{4} \bekr$, which gives
\[ \card{\mc{F}^*(i)} \ge \left( \frac{\ga}{4} - \frac{13 k^2 r^2}{cn} \right) \bekr \ge \frac{\ga}{8} \bekr, \]
since $n > C k^2 r^3$ for some large enough constant $C$.

Hence for every $i \in X$, we in fact have the much stronger bound $\card{\mc{F}^*(i)} \ge \frac{\ga}{8} \bekr$.  Repeating the calculation of Claim~\ref{clm:Xiskbounded} with this new bound gives $\card{X} \le \frac{8r}{\ga}$, as required.
\end{proof}

Our next claim shows that $X$ is indeed a cover for $\mc{F}$.

\begin{claim} \label{clm:Xisacover}
$X$ is a cover for $\mc{F}$; that is, $\mc{F}_1 = \mc{F}$ and $\mc{F}_2 = \emptyset$.
\end{claim}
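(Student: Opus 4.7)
The plan is to argue by contradiction, confronting the small upper bound on $|X|$ from Claim~\ref{clm:Xissmall} with the lower bound from Claim~\ref{clm:largeF1int} via a simple double count.

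I would begin by assuming $\mc{F}_2 \neq \emptyset$ and picking any $F \in \mc{F}_2$, so that $F \cap X = \emptyset$. The key observation is that any $G \in \mc{F}_1$ that intersects $F$ must contain two distinct witnesses: some $x \in X \cap G$ (since $G \in \mc{F}_1$) and some $y \in F \cap G$, with $x \neq y$ forced by $F \cap X = \emptyset$. Double counting these witness pairs, one obtains the upper bound
\[
|\{G \in \mc{F}_1 : G \cap F \neq \emptyset\}| \le |X| \cdot |F| \cdot \binom{n-2}{k-2}.
\]

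I would then invoke Claim~\ref{clm:Xissmall} to replace $|X|$ by $8r/\gamma$, use $|F|=k$ and $\binom{n-2}{k-2} \le \tfrac{k}{n}\binom{n-1}{k-1}$, and confront the resulting bound with the lower bound $\tfrac{\gamma}{4}\binom{n-1}{k-1}$ from Claim~\ref{clm:largeF1int}. The inequality $\tfrac{\gamma}{4} \le \tfrac{8k^2 r}{\gamma n}$ rearranges to $n \le 32 k^2 r / \gamma^2$, and substituting $\gamma \ge c/r$ from the induction setup yields $n \le 32 k^2 r^3/c^2$, which contradicts $n \ge Ck^2 r^3$ once $C$ is chosen large enough relative to $c$.

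The main (and really only) obstacle is to set up the right double count: the structural hypothesis $F \in \mc{F}_2$ is used to guarantee that the $X$-witness and the $F$-witness in a crossing set $G$ cannot collide, so each such $G$ carries a genuine pair in $X \times F$. Once that observation is made, the small cover size from Claim~\ref{clm:Xissmall} beats the $\mc{F}_1$-intersection lower bound of Claim~\ref{clm:largeF1int} by a factor comfortably larger than one whenever $n \gg k^2 r^3$, which is exactly the hypothesis of Theorem~\ref{thm:setsupersat}.
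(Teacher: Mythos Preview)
Your proposal is correct and follows essentially the same argument as the paper: both pick $F\in\mc{F}_2$, observe that any $G\in\mc{F}_1$ meeting $F$ must contain a pair $(x,y)\in X\times F$ with $x\neq y$, bound the number of such $G$ by $k\,|X|\binom{n-2}{k-2}\le \tfrac{8k^2r}{\gamma n}\binom{n-1}{k-1}$ via Claim~\ref{clm:Xissmall}, and then contradict the lower bound $\tfrac{\gamma}{4}\binom{n-1}{k-1}$ from Claim~\ref{clm:largeF1int} using $\gamma\ge c/r$ and $n\ge Ck^2r^3$. The only cosmetic difference is that you spell out the double count and the non-collision of witnesses more explicitly.
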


\begin{proof}
Suppose for contradiction we had some set $F \in \mc{F}_2$.  By Claim~\ref{clm:largeF1int}, at least $\frac{\ga}{4} \bekr$ sets in $\mc{F}_1$ must intersect $F$.  However, each such set must contain at least one element of $X$, which by Claim~\ref{clm:Xissmall} has size at most $\frac{8r}{\ga}$, together with one element from $F$.  Hence there are at most $k \card{X} \binom{n-2}{k-2} \le \frac{8k^2r}{\ga n} \bekr$ sets in $\mc{F}_1$ intersecting $F$. Since $\ga \ge \frac{c}{r}$ and $n > C k^2 r^3$ for some large enough constant $C$, this is less than $\frac{\ga}{4} \bekr$, giving the desired contradiction.
\end{proof}

Now observe that every set in $\mc{F}^*(i)$ meets $X$ in the element $i$.  If it intersects $X$ in further elements, there are at most $\card{X} \le \frac{8r}{\ga}$ choices from the other element, and at most $\binom{n-2}{k-2} \le \frac{k}{n} \bekr$ choices for the rest of the set.  Hence at most $\frac{8kr}{\ga n} \bekr < \frac{\ga}{8} \bekr \le \card{\mc{F}^*(i)}$ sets in $\mc{F}^*(i)$ meet $X$ in at least two elements, and thus there must be some set $F_i \in \mc{F}^*(i)$ such that $F_i \cap X = \{ i \}$.  We shall use this fact to establish the following claim.

\begin{claim} \label{clm:balancedcover}
For all $i, j \in X$, $\card{ \mc{F}^*(j) } - \card{ \mc{F}^*(i)} \le \frac{8k^2r}{\ga n} \bekr$.
\end{claim}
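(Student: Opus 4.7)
The plan is to establish the bound by a single switching argument that exchanges a canonical set containing $i$ for a set containing $j$, and reads off the inequality from the optimality of $\mc{F}$. Since the greedy relabeling forces $\card{\mc{F}^*(\cdot)}$ to be nonincreasing, $X$ is necessarily an initial segment $[t] \subseteq [n]$, so the claim is nontrivial only when $j < i$; henceforth I assume $j < i$. Introducing $\mc{F}^{(1)}_x := \{F \in \mc{F} : F \cap X = \{x\}\}$, the paragraph preceding the claim yields $\card{\mc{F}^*(x)} - \card{\mc{F}^{(1)}_x} \le \frac{8kr}{\ga n}\bekr$, so---modulo lower-order slack---it suffices to bound $\card{\mc{F}(j)} - \card{\mc{F}(i)}$.

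The switch I would use starts from the canonical $F_i = \{i\} \cup T_i \in \mc{F}^*(i)$ with $F_i \cap X = \{i\}$ (guaranteed by the paragraph preceding the claim) and replaces it with $S := \{j\} \cup T''$, where $T'' \subseteq [n] \setminus (X \cup T_i)$ is a $(k-1)$-subset satisfying $S \notin \mc{F}$. The deliberate choice $T'' \cap T_i = \emptyset$, together with $j \notin T_i$ and $i \notin T''$, guarantees $F_i \cap S = \emptyset$---a crucial feature, since it eliminates the usual ``$+1$'' correction in the swap inequality. Optimality of $\mc{F}$ then forces $\mathrm{int}(S, \mc{F}) \le \mathrm{int}(F_i, \mc{F})$, and expanding both intersection counts by the degrees of $i$, $j$, and the elements of $T_i \cup T''$---invoking Claim~\ref{clm:Xisacover}, which implies $\card{\mc{F}(\ell)} \le \card{X}\binom{n-2}{k-2} \le \frac{8kr}{\ga n}\bekr$ for each $\ell \in [n] \setminus X$---the inequality collapses to $\card{\mc{F}(j)} \le \card{\mc{F}(i)} + O\!\left(\frac{k^2 r}{\ga n}\right)\!\bekr$. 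Unwinding the resulting bound through the $\card{\mc{F}} \leftrightarrow \card{\mc{F}^*}$ slack (sets in $\mc{F}(i) \setminus \mc{F}^*(i)$ meet $X$ in at least two elements, whose number is again bounded by $\frac{8kr}{\ga n}\bekr$) delivers the claimed bound.

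The main obstacle I anticipate is producing the swap target $T''$: there are $\binom{n - \card{X} - k + 1}{k-1}$ candidates, of which at most $\card{\mc{F}^{(1)}_j} \le \card{\mc{F}(j)}$ are excluded by $S \in \mc{F}$. A valid $T''$ exists whenever $\card{\mc{F}(j)}$ is even mildly smaller than $\binom{n-\card{X}-k+1}{k-1}$. The no-full-star hypothesis gives $\card{\mc{F}(j)} < \bekr$, but this slack may need to be sharpened in the tight regime---most likely by combining Claim~\ref{clm:largeF1int} with the parameter assumption $n \ge C k^2 r^3$, which together should push $\card{\mc{F}(j)}$ sufficiently far below $\bekr$ to guarantee room. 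Once $T''$ is located, the remainder of the argument is a mechanical unpacking of intersection counts.
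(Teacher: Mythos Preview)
Your core idea---the switching argument exchanging $F_i$ for a set through $j$---is exactly the paper's approach, but you have overengineered the swap target and thereby manufactured an obstacle that the paper avoids entirely.

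The paper simply takes \emph{any} $G \ni j$ with $G \notin \mc{F}$; such a $G$ exists immediately from the standing assumption that $\mc{F}(j)$ is not a full star. No further structure on $G$ is needed, because the only fact used about $G$ is the trivial lower bound $\mathrm{int}(G,\mc{F}) \ge \card{\mc{F}^*(j)}$ (every set in $\mc{F}^*(j)$ contains $j \in G$). On the other side, since $F_i \cap X = \{i\}$, any $H \in \mc{F}$ intersecting $F_i$ either lies in $\mc{F}^*(i)$ or contains both an element of $F_i$ and an element of $X\setminus\{i\}$, giving $\mathrm{int}(F_i,\mc{F}) \le \card{\mc{F}^*(i)} + k\card{X}\binom{n-2}{k-2} \le \card{\mc{F}^*(i)} + \frac{8k^2r}{\ga n}\bekr$. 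If $\card{\mc{F}^*(j)} - \card{\mc{F}^*(i)} > \frac{8k^2r}{\ga n}\bekr$, the swap strictly decreases disjoint pairs, contradicting optimality. That is the whole proof.

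Your constraints $T'' \subseteq [n]\setminus(X\cup T_i)$ buy you nothing: the ``$+1$ correction'' you are trying to eliminate is utterly negligible against $\frac{8k^2r}{\ga n}\bekr$, and the lower bound on $\mathrm{int}(S,\mc{F})$ you need holds for \emph{any} $S \ni j$. Worse, the constraints create a genuine existence problem: if the only missing set in the star at $j$ happens to meet $X\setminus\{j\}$ or $T_i$, there is no valid $T''$ at all. Your proposed rescue via Claim~\ref{clm:largeF1int} does not work---that claim gives a \emph{lower} bound on intersection counts, not the upper bound on $\card{\mc{F}(j)}$ you would need, and nothing in the ambient hypotheses forces $\card{\mc{F}(j)}$ appreciably below $\bekr$. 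The detour through $\card{\mc{F}(i)},\card{\mc{F}(j)}$ and back to $\card{\mc{F}^*(\cdot)}$ is likewise unnecessary; the paper works with $\mc{F}^*$ throughout. Drop the constraints on the swap target and the argument collapses to two lines.
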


\begin{proof}
Suppose for contradiction $\card{\mc{F}^*(j)} > \card{\mc{F}^*(i)} + \frac{8k^2 r}{\ga n} \bekr$.  Let $F_i \in \mc{F}^*(i)$ be such that $F_i \cap X = \{ i \}$.  Then $F_i$ intersects only the sets that contain $i$ together with sets containing some other element in $X$ and some element in $F_i$.  This gives a total of at most
\[ \card{\mc{F}^*(i)} + k \card{X} \binom{n-2}{k-2} \le \card{\mc{F}^*(i)} + \frac{8k^2r}{\ga n} \bekr < \card{\mc{F}^*(j)}\]
sets.  On the other hand, if we replace $F_i$ by some set $G$ containing $j$ (which we may do, since we assume the family $\mc{F}(j)$ is not a full star), we would gain at least $\card{\mc{F}^*(j)}$ intersecting pairs.  Hence $\mc{F} \cup \{ G \} \setminus \{ F_i \}$ is a family of $s$ sets with strictly fewer disjoint pairs, contradicting the optimality of $\mc{F}$.
\end{proof}

This claim shows that the sets in $\mc{F}$ are roughly equally distributed over the families $\mc{F}^*(i)$, $i \in X$.  To simplify the notation, we let $m = \card{X}$, and so we have $X = [m]$.  By Claim~\ref{clm:Xissmall}, $m \le \frac{8r}{\ga}$.  We shall now proceed to lower-bound the number of disjoint pairs in $\mc{F}$.  Note that $\disj(\mc{F}) = \sum_{1 \le i < j \le m} \disj(\mc{F}^*(i), \mc{F}^*(j))$.  We shall use Corollary~\ref{cor:setspectral} to bound these summands.  We let $s_i = \card{\mc{F}^*(i)} \bekr^{-1}$ and set $\overline{s} = s \bekr^{-1} = \sum_i s_i$.  Note that $s = \bnk - \binom{n-r+1}{k} + \ga \binom{n-r}{k-1}$ and $\ga \in \left[\frac{c}{r},1\right]$ implies $r-1 \le r-1 + \ga - \frac{kr^2}{2n} \le \overline{s} \le r$.

We then have
\begin{align*}
	\disj(\mc{F}) &= \sum_{1 \le i < j \le m} \disj(\mc{F}^*(i), \mc{F}^*(j)) \\
		&\ge \sum_{1 \le i < j \le m} \left[ \left(1 - \frac{k^2}{n} \right) \card{\mc{F}^*(i)}\card{\mc{F}^*(j)} - \frac{3k}{2n} \left( \card{\mc{F}^*(i)} + \card{\mc{F}^*(j)} \right) \bekr \right] \\
		&\ge \left[ \left( 1 - \frac{k^2}{n} \right) \sum_{i < j} s_i s_j - \frac{3k}{2n} \sum_{i < j} (s_i + s_j) \right] \bekr^2 \\
		&\ge \left[ \frac12 \left( 1 - \frac{k^2}{n} \right) \left( \overline{s}^2 - \sum_i s_i^2 \right) - \frac{3km\overline{s}}{2n} \right] \bekr^2 \\
		&\ge \frac12 \left[ \overline{s}^2 - \frac{k^2\overline{s}^2}{n} - \sum_i s_i^2 - \frac{3km \overline{s}}{n} \right] \bekr^2.
\end{align*}

Since $\sum_i s_i = \overline{s}$, there must be some $\l$ with $s_\l \le \frac{\overline{s}}{m}$, and Claim~\ref{clm:balancedcover} then implies that for every $i$, $s_i \le \frac{\overline{s}}{m} + \frac{8k^2r}{\ga n}$.  Hence $\sum_i s_i^2 \le \left( \max_i s_i \right) \sum_i s_i \le \left( \frac{\overline{s}}{m} + \frac{8k^2 r}{\ga n} \right) \overline{s}$, giving
\begin{equation} \label{ineq:finallowerbound}
\disj(\mc{F}) \ge \frac12 \left( 1 - \frac{1}{m} - \frac{k^2}{n} - \frac{8k^2 r}{\ga \overline{s} n} - \frac{3km}{\overline{s} n} \right) \left( \overline{s} \bekr \right)^2.
\end{equation}

\begin{claim} \label{clm:rcover}
$\card{X} = r$; that is, $\mc{F}$ has a cover of size $r$.
\end{claim}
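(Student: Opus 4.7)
My plan is to establish $\card{X} = r$ by sandwiching $m := \card{X}$ between $r$ and $r$, via two separate arguments.

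For the lower bound $m \ge r$: since $\mc{F}^*(i)$ consists only of sets containing $i$ as their smallest element, $\card{\mc{F}^*(i)} \le \binom{n-i}{k-1} \le \bekr$, so $s_i \le 1$ for every $i \in X$. Consequently $\bar{s} = \sum_{i \in X} s_i \le m$. A routine calculation using $\ga > c/r$ together with the hypothesis $n \ge Ck^2r^3$ to control the ratios $\binom{n-1-i}{k-1}/\bekr$ for $i \le r$ shows that $\bar{s} > r-1$, forcing the integer $m$ to be at least $r$.

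For the upper bound $m \le r$: I will argue by contradiction, assuming $m \ge r+1$. Plugging $m \ge r+1$ into \eqref{ineq:finallowerbound} and collecting the error terms as $E := \frac{k^2}{n} + \frac{8k^2r}{\ga\bar{s}n} + \frac{3km}{\bar{s}n}$ gives
\[ \disj(\mc{F}) \ge \tfrac12 \left(1 - \tfrac{1}{r+1} - E\right) s^2. \]
To estimate $E$, I will use $\ga \ge c/r$, $\bar{s} \ge r/2$, and the bound $m \le 8r/\ga \le 8r^2/c$ (obtained by combining Claim~\ref{clm:Xissmall} with the lower bound on $\ga$); together with $n \ge Ck^2r^3$, these inputs will yield $E = O\bigl(1/(cCr^2)\bigr)$, which is strictly less than $\frac{1}{r(r+1)} = \frac{1}{r} - \frac{1}{r+1}$ once $C$ is taken sufficiently large. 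Combining this with the easy upper bound \eqref{ineq:setbound2}, namely $\disj(n,k,s) \le \tfrac12(1-1/r)s^2$, then produces
\[ \disj(\mc{F}) > \tfrac12 \left(1 - \tfrac{1}{r}\right) s^2 \ge \disj(n,k,s), \]
contradicting the standing assumption $\disj(\mc{F}) \le \disj(n,k,s)$.

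The main obstacle is the bookkeeping required to make $E$ genuinely smaller than the gap $\frac{1}{r(r+1)}$. Since this gap decays like $1/r^2$, each of the three error contributions must be controlled to order $o(1/r^2)$; this is precisely what the strong hypothesis $n \ge Ck^2r^3$ (rather than a softer $n \ge Ck^2r^2$ that would suffice elsewhere), paired with the sharp upper bound on $\card{X}$ from Claim~\ref{clm:Xissmall} and the lower bound $\ga > c/r$ preserved from the induction step, is designed to deliver.
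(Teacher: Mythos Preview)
Your proof is correct and follows essentially the same approach as the paper for the upper bound $m \le r$, comparing \eqref{ineq:finallowerbound} against \eqref{ineq:setbound2} with the same error estimates. For the lower bound $m \ge r$, the paper gives a slightly more direct argument: since $X$ is a cover for $\mc{F}$ (Claim~\ref{clm:Xisacover}) and any family covered by $r-1$ elements has size at most $\bnk - \binom{n-r+1}{k} < s$, one immediately obtains $\card{X} \ge r$ without computing $\bar{s}$.
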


\begin{proof}
Since $s > \bnk - \binom{n-r+1}{k}$, $\mc{F}$ cannot be covered by $r-1$ elements.  Hence we must have $m = \card{X} \ge r$.  

Now recall we have $s = \overline{s} \bekr$, $\overline{s} \ge r-1$, $\ga \ge \frac{c}{r}$, $m \le \frac{8 r}{\ga} \le 8 c^{-1} r^2$ and $n \ge C k^2 r^3$ for some sufficiently large constant $C$.  Substituting these bounds into~\eqref{ineq:finallowerbound}, we find
\[ \disj(\mc{F}) > \frac12 \left( 1 - \frac{1}{m} - \frac{1}{r(r+1)} \right) s^2. \]

However, by~\eqref{ineq:setbound2}, we must have
\[ \disj(\mc{F}) \le \frac12 \left( 1 - \frac{1}{r} \right) s^2. \]

These two bounds together imply $\frac{1}{m} + \frac{1}{r(r+1)} > \frac{1}{r}$, which in turn gives $m < r+1$.  This shows $m = r$, and $X$ is thus a cover of size $r$.
\end{proof}

Hence it follows that $\mc{F}$ is covered by some $r$ elements, which we may without loss of generality assume to be $[r]$.  We now finish with a similar argument as in the proof of Corollary~\ref{cor:setclosetostar}: let $\mc{S}$ be the union of the $r$ stars with centres in $[r]$, and let $\mc{G} = \mc{S} \setminus \mc{F}$ be the missing sets.  Then $\disj(\mc{F}) = \disj(\mc{S}) - \disj(\mc{G}, \mc{S}) + \disj(\mc{G})$ is minimised when $\mc{G}$ is an intersecting family of sets that each meet $[r]$ in precisely one element, which is the case for $\mc{F} = \mc{L}(n,k,s)$.  Hence $\disj(\mc{F})\ge \disj(n,k,s)$, completing the proof of the theorem.
\end{proof}

The problem of minimising the number of disjoint pairs can be viewed as an isoperimetric inequality in the Kneser graph.  The following lemma links isoperimetric problems for small and large families (see, for instance,~\cite[Lemma 2.3]{D-G-S-2}).

\begin{lemma}\label{lem:small-large}
	Let $G=(V,E)$ be a regular graph on $n$ vertices. Then $S\subset V$ minimises the number of edges $e(S)$ over all sets of $\card{S}$ vertices if and only if $V\setminus S$ minimises the number of edges over all sets of $n-\card{S}$ vertices.	
\end{lemma}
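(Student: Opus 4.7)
The plan is to exploit the $d$-regularity of $G$ to show that $e(S)$ and $e(V\setminus S)$ differ by a quantity that depends only on $\card{S}$ (and on $n$ and $d$), from which the equivalence follows immediately.

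First I would count the edges incident to $S$ in two ways. On the one hand, summing degrees over $S$ gives $d\card{S}$, while on the other hand each edge inside $S$ is counted twice and each edge of the cut $E(S,V\setminus S)$ is counted once. Hence
\[
2e(S)+e(S,V\setminus S)=d\card{S}.
\]
Since the total number of edges of $G$ is $dn/2$, we also have
\[
e(S)+e(V\setminus S)+e(S,V\setminus S)=\frac{dn}{2}.
\]
Subtracting the first identity from twice the second and simplifying yields
\[
e(V\setminus S)-e(S)=\frac{d(n-2\card{S})}{2}.
\]

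This difference depends only on $\card{S}$, $n$ and $d$, so for any two subsets $S_1,S_2$ of the same size we have $e(V\setminus S_1)-e(V\setminus S_2)=e(S_1)-e(S_2)$. Therefore $S$ minimises $e(S)$ among sets of size $\card{S}$ if and only if $V\setminus S$ minimises $e(V\setminus S)$ among sets of size $n-\card{S}$, which is the desired conclusion. There is essentially no obstacle here; the only point to be careful about is the regularity assumption, which is precisely what makes the cut size $e(S,V\setminus S)$ expressible purely in terms of $\card{S}$ and $e(S)$.
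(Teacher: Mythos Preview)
Your argument is correct and is exactly the standard proof of this fact. Note that the paper does not supply its own proof of Lemma~\ref{lem:small-large}, instead citing~\cite[Lemma 2.3]{D-G-S-2}; your derivation of the identity $e(V\setminus S)-e(S)=d(n-2\card{S})/2$ from the degree-sum and edge-partition relations is precisely what lies behind that reference. (One tiny slip: the displayed identity follows from subtracting the first equation from the second, not from twice the second, but your final formula is correct regardless.)
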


The following corollary, which is a direct consequence of Theorem \ref{thm:setsupersat} and Lemma \ref{lem:small-large}, shows that the complements of the lexicographical initial segments, which are isomorphic to initial segments of the colexicographical order, are optimal when $s$ is close to $\bnk$.

\begin{cor}
	There exists a positive constant $C$ such that the following statement holds. Provided $n \ge C k^2 r^3$ and $\binom{n-r}{k}\le s\le \bnk$, $\binom{[n]}{k}\setminus\mc{L}(n,k,\bnk-s)$ minimises the number of disjoint pairs among all systems of $s$ sets in $\binom{[n]}{k}$.	
\end{cor}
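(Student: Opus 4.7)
\medskip

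\noindent\textbf{Proof proposal.} The plan is to combine Theorem~\ref{thm:setsupersat} with Lemma~\ref{lem:small-large} in the most direct way possible. The minimisation of disjoint pairs in a $k$-uniform set system can be reformulated as an edge-isoperimetric problem on the Kneser graph $KG(n,k)$, whose vertex set is $\binom{[n]}{k}$, with edges joining disjoint pairs. This graph is $\binom{n-k}{k}$-regular, so Lemma~\ref{lem:small-large} applies: a subset $\mc{F} \subseteq \binom{[n]}{k}$ minimises the number of disjoint pairs among families of size $\card{\mc{F}}$ if and only if its complement $\binom{[n]}{k} \setminus \mc{F}$ minimises the number of disjoint pairs among families of size $\binom{n}{k} - \card{\mc{F}}$.

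Given $s$ with $\binom{n-r}{k} \le s \le \binom{n}{k}$, set $s' = \binom{n}{k} - s$; this satisfies $0 \le s' \le \binom{n}{k} - \binom{n-r}{k}$, so by Theorem~\ref{thm:setsupersat} applied with the same constant $C$, the lexicographic initial segment $\mc{L}(n,k,s')$ minimises the number of disjoint pairs among all subfamilies of $\binom{[n]}{k}$ of size $s'$. Applying Lemma~\ref{lem:small-large} to $\mc{L}(n,k,s')$ then yields that its complement $\binom{[n]}{k} \setminus \mc{L}(n,k,s') = \binom{[n]}{k} \setminus \mc{L}(n,k,\binom{n}{k}-s)$ minimises the number of disjoint pairs among all subfamilies of size $s$, which is the desired conclusion.

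There is no real obstacle: both ingredients are already in place and the statement is just the formal dualisation under complementation. The only thing to keep in mind is that the regularity hypothesis of Lemma~\ref{lem:small-large} is satisfied by the Kneser graph (every $k$-set has exactly $\binom{n-k}{k}$ disjoint partners), which justifies the equivalence between the small-family and large-family isoperimetric problems. The ``colexicographical'' description mentioned in the statement is simply a reinterpretation of the complement: removing the first $s'$ sets in the lex order on $\binom{[n]}{k}$ leaves the last $s$ sets, and reversing the ground-set order shows this is the initial segment of the colex order of size $s$.
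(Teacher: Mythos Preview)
Your proposal is correct and matches the paper's approach exactly: the paper states that the corollary ``is a direct consequence of Theorem~\ref{thm:setsupersat} and Lemma~\ref{lem:small-large}'' without giving further details, and you have spelled out precisely this dualisation via the regularity of the Kneser graph. There is nothing to add.
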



\section{Typical structure of set systems with given matching number}\label{sec:aa}

\subsection{Families with no matching of size $s$}

In this section we describe the structure of $k$-uniform set families without matchings of size $s$.  The following lemma, which follows readily from \cite[Lemmas 2.2 and 2.3]{BDDLS}, gives a sufficient condition for the trivial extremal families to be typical.

\begin{lemma}\label{lem:count-fam-decreasing}
Let $\mathbf{P}$ be a decreasing property. Let $N_0$ denote the size of the extremal (that is, largest) family with property $\mathbf{P}$, $N_1$ the size of the largest non-extremal maximal family, and suppose two distinct extremal families have at most $N_2$ members in common. Suppose further that the number of extremal families is $T$, and there are at most $M$ maximal families. Provided
\begin{equation}\label{count-fam-decreasing}
2\log M + \max(N_1,N_2)-N_0 \rightarrow -\infty,
\end{equation} 	
the number of families with property $\mathbf{P}$ is $(T+o(1))2^{N_0}$.
\end{lemma}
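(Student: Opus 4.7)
The proof is a standard container-type double count. Since $\mathbf{P}$ is decreasing, every family with property $\mathbf{P}$ is a subfamily of at least one maximal family with property $\mathbf{P}$, so the plan is to estimate the number of such subfamilies both above, by summing over all maximal families, and below, by counting subfamilies of extremal families via Bonferroni, and then to verify that the hypothesis forces both bounds to agree asymptotically with $T \cdot 2^{N_0}$.

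For the upper bound I would separate the maximal families into the $T$ extremal ones, each of size exactly $N_0$, and at most $M - T$ non-extremal maximal ones, each of size at most $N_1$. Since every $\mathbf{P}$-family is a subfamily of some maximal one, this yields at most
\[
T \cdot 2^{N_0} + M \cdot 2^{N_1} = \left( T + 2^{\log M + N_1 - N_0} \right) 2^{N_0}.
\]
As $\log M \ge 0$, the hypothesis $2\log M + \max(N_1, N_2) - N_0 \to -\infty$ forces $\log M + N_1 - N_0 \to -\infty$, so the correction is $o(2^{N_0})$.

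For the lower bound I would apply Bonferroni (truncated at two terms) to the union of power sets of the extremal families $\mc E_1, \dots, \mc E_T$, which gives at least
\[
\sum_{i=1}^T 2^{|\mc E_i|} - \sum_{1 \le i < j \le T} 2^{|\mc E_i \cap \mc E_j|} \ge T \cdot 2^{N_0} - \binom{T}{2} 2^{N_2} \ge T \cdot 2^{N_0} - M^2 \cdot 2^{N_2},
\]
using $T \le M$ together with the assumption on pairwise overlaps. The correction term equals $2^{2\log M + N_2 - N_0} \cdot 2^{N_0} = o(2^{N_0})$ by hypothesis. Combining the two bounds then delivers the asymptotic $(T + o(1)) 2^{N_0}$.

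The argument has no serious obstacle; the one point worth flagging is that the factor of $2$ in front of $\log M$ in the hypothesis is exactly what is needed to simultaneously absorb two rather different error sources: the non-extremal maximal families (which need only $\log M + N_1 - N_0 \to -\infty$, the weaker condition) and the $\binom{T}{2} \le M^2$ quadratic Bonferroni loss from pairwise overlaps of extremal families (which needs the stronger $2\log M + N_2 - N_0 \to -\infty$). Writing the hypothesis with $\max(N_1,N_2)$ and $2\log M$ is precisely the clean uniform condition that controls both.
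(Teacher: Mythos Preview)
Your argument is correct and is precisely the standard one. The paper itself does not spell out a proof of this lemma but simply remarks that it ``follows readily from [Lemmas 2.2 and 2.3]{BDDLS}''; those two lemmas are exactly the upper bound over maximal families and the Bonferroni lower bound over extremal families that you wrote down, so your proof matches the intended argument.
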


We will apply Lemma \ref{lem:count-fam-decreasing} with $\mathbf{P}$ being the property of avoiding a matching of size $s$ or, equivalently, of not containing $s$ pairwise disjoint sets.  To do so, we first bound the number of maximal families with no matching of size $s$.
\begin{prop}\label{prop:max-no-s-disj}
	The number of maximal $k$-uniform families over $[n]$ with no matching of size $s$ is at most $\bnk^{\binom{sk}{k}}$.
\end{prop}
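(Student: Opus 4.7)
The aim is to show each maximal family $\mathcal{F} \subseteq \binom{[n]}{k}$ avoiding an $s$-matching can be encoded by a list of at most $\binom{sk}{k}$ sets drawn from $\binom{[n]}{k}$, giving the count $\binom{n}{k}^{\binom{sk}{k}}$.

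First I dispose of the trivial range $n < sk$: here $\binom{[n]}{k}$ itself has matching number less than $s$, so it is the unique maximal family with the property and the bound is vacuous (at least $1$). Assume henceforth $n \ge sk$.

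In this regime, maximality of $\mathcal{F}$ forces $\nu(\mathcal{F}) = s-1$: if a maximum matching $M$ of $\mathcal{F}$ had size at most $s-2$, then $|V(M)| \le (s-2)k$ and $n - |V(M)| \ge 2k$, so some $k$-set in $[n] \setminus V(M)$ could be adjoined to $\mathcal{F}$ without creating an $s$-matching, contradicting maximality. I would then fix canonically (say, lexicographically) a maximum matching $M = \{A_1,\dots,A_{s-1}\} \subseteq \mathcal{F}$ and a $k$-set $B \subseteq [n] \setminus V(M)$, setting $W = V(M) \cup B$ so that $|W| = sk$.

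The heart of the argument is to exhibit an injection from the collection of maximal $\mathcal{F}$ into functions $\phi \colon \binom{W}{k} \to \binom{[n]}{k}$, since there are at most $\binom{n}{k}^{\binom{sk}{k}}$ such functions. A natural candidate is: $\phi_\mathcal{F}(C) = C$ if $C \in \mathcal{F}$, while if $C \notin \mathcal{F}$, set $\phi_\mathcal{F}(C) = G_C$ for a canonical witness $G_C \in \mathcal{F}$ disjoint from $C$ that, together with $s-2$ further members of $\mathcal{F}$, extends $C$ to an $s$-matching in $\mathcal{F} \cup \{C\}$ (existence guaranteed by maximality). A bookkeeping point is that $W$ itself can be read off from $\phi_\mathcal{F}$—for instance from fixed-points of $\phi_\mathcal{F}$ together with the canonical choice of $M$ and $B$—so no separate factor for encoding $W$ is needed.

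The main obstacle is verifying that $\phi_\mathcal{F}$ uniquely determines $\mathcal{F}$, in particular that for every $F \in \binom{[n]}{k} \setminus \binom{W}{k}$ we can read off whether $F \in \mathcal{F}$. The plan here is to use the maximality criterion ``$F \in \mathcal{F}$ iff no matching of size $s-1$ in $\mathcal{F}$ is disjoint from $F$'' and to assemble the required certifying matchings from the witnesses $G_C$ together with the fixed maximum matching $M$, exploiting augmenting-path style arguments relative to $M$. Turning this informal certification into a proof that distinct maximal families produce distinct encodings is the delicate step; once done, the counting bound follows immediately.
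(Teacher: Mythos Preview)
Your proposal has a genuine gap at precisely the point you flag as ``the delicate step'': the injectivity of the encoding $\phi_{\mathcal{F}}$. The data you record --- the restriction $\mathcal{F}\cap\binom{W}{k}$ together with one disjoint witness $G_C\in\mathcal{F}$ for each $C\in\binom{W}{k}\setminus\mathcal{F}$ --- does not obviously determine $\mathcal{F}$. To decide whether an arbitrary $F\notin\binom{W}{k}$ lies in $\mathcal{F}$ via the maximality criterion, you need to know whether some $(s-1)$-matching in $\mathcal{F}$ avoids $F$; such a matching may consist entirely of sets outside $W$ that your encoding never mentions. The single witnesses $G_C$ are far too sparse to certify this, and the vague appeal to ``augmenting-path style arguments relative to $M$'' does not address the difficulty, since augmenting paths would again require knowledge of edges (sets) of $\mathcal{F}$ outside $W$. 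There is also a circularity in recovering $W$: your encoding is a sequence indexed by $\binom{W}{k}$, so identifying fixed points already presupposes knowing $W$, and the canonical matching $M$ is lex-first in all of $\mathcal{F}$, not in $\mathcal{F}\cap\binom{W}{k}$.

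The paper's argument is entirely different and avoids these issues. It shows that every maximal $\mathcal{F}$ admits a \emph{generating subfamily} $\mathcal{F}_0\subseteq\mathcal{F}$ with $\mathcal{I}(\mathcal{F}_0)=\mathcal{F}$, and that a minimal such $\mathcal{F}_0$ has size at most $\binom{sk}{k}$. The size bound comes from the Bollob\'as set-pairs inequality: minimality yields, for each $F_i\in\mathcal{F}_0$, a set $B_i$ of size $(s-1)k$ (a union of $s-1$ disjoint $k$-sets) with $F_i\cap B_i=\emptyset$ but $F_j\cap B_i\neq\emptyset$ for $j\neq i$. Since $\mathcal{F}_0$ determines $\mathcal{F}$ by construction, the map $\mathcal{F}\mapsto\mathcal{F}_0$ is injective, and the count follows. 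This bypasses any need to reconstruct $\mathcal{F}$ from local data near a fixed matching.
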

\begin{proof}
Given $\mc{F}\subset \binom{[n]}{k}$, let $\mc{I}(\mc{F})=\big\{G\in \binom{[n]}{k}: \text{$\mc{F}\cup\{G\}$ does not have $s$ pairwise disjoint sets}\}$. Note that $\mc{F}$ does not contain a matching of size $s$ if and only if $\mc{F}\subset\mc{I}(\mc{F})$, while $\mc{F}$ is maximal if and only if $\mc{I}(\mc{F})=\mc{F}$.
Given a maximal family $\mc{F}$, we say that $\mc{G}\subset \mc{F}$ is a {\em generating family} of $\mc{F}$ if $\mc{I}(\mc{G})=\mc{F}$.

Let $\mc{F}_0=\{F_1,\ldots,F_m\}\subset\mc{F}$ be a minimal generating family of $\mc{F}$. By the minimality of $\mc{F}_0$, we must have $\mc{I}(\mc{F}_0\setminus\{F_i\})\supsetneq \mc{F}=\mc{I}(\mc{F}_0)$, for each $1\le i\le m$. Hence we can find some set $G_{i,s-1} \in \mc{I}(\mc{F}_0\setminus\{F_i\})\setminus\mc{I}(\mc{F}_0)$. It follows that there exist $s-2$ sets $G_{i,1},\ldots,G_{i,s-2}$ in $\mc{F}_0$ such that $F_i,G_{i,1},\ldots,G_{i,s-2}$ and $G_{i,s-1}$ are pairwise disjoint, while for every $j\ne i$, $F_j,G_{i,1},\ldots,G_{i,s-2}$ and $G_{i,s-1}$ are not pairwise disjoint. In other words, if we let $G_i=G_{i,1}\cup\ldots\cup G_{i,s-1}$, then $F_i\cap G_i=\emptyset$ and $F_j\cap G_i \ne \emptyset$ for $j\ne i$. Given these conditions, we may apply the Bollob\'as set-pairs inequality \cite{Bollobas65} to bound the size of $\mc{F}_0$.
\begin{theorem}[Bollob\'as]\label{bollobas:set-pairs}
	Let $A_1,\ldots,A_m$ be sets of size $a$ and $B_1,\ldots,B_m$ sets of size $b$ such that $A_i\cap B_i=\emptyset$ and $A_i\cap B_j\ne \emptyset$ for every $i\ne j$. Then $m\le \binom{a+b}{a}$.
\end{theorem}

	We apply this to the pairs $\{(A_i,B_i)\}_{i=1}^m$, where for $1\le i\le m$ we take $A_i=F_i$ and $B_i=G_i$. The conditions of Theorem \ref{bollobas:set-pairs} are satisfied, and hence we deduce $m\le \binom{sk}{k}$.
	
	We map each maximal family $\mc{F}$ to a minimal generating family $\mc{F}_0\subset\mc{F}$. This map is injective because $\mc{I}(\mc{F}_0)=\mc{F}$. We have shown that $\card{\mc{F}_0}\le \binom{sk}{k}$, and thus the number of maximal families is bounded from above by $\sum_{i=0}^{\binom{sk}{k}}\binom{\bnk}{i} \le \bnk^{\binom{sk}{k}}$,
	as desired.
\end{proof}

\begin{proof}[Proof of Theorem \ref{thm:no-s-disj}]
We shall verify that the condition \eqref{count-fam-decreasing} from Lemma \ref{lem:count-fam-decreasing} holds. A result of Frankl \cite[Theorem 1.1]{Frankl13} states that when $n\ge (2s-1)k-s+1$, the extremal families with no $s$ pairwise disjoint sets are isomorphic to $\left\{F\in \binom{[n]}{k}:F\cap [s-1]\ne \emptyset\right\}$, and consequently we may take $N_0=\bnk-\binom{n-s+1}{k}$ and $T=\binom{n}{s-1}$. Moreover, it is not difficult to see that the intersection of any two extremal families has size at most $N_2=\bnk-\binom{n-s+1}{k}-\binom{n-s}{k-1}$. Furthermore, a result due to Frankl and Kupavskii \cite[Theorem 5]{fk17-stability} implies that
$N_1\le \bnk-\binom{n-s+1}{k}-\frac{1}{s+1}\binom{n-k-s+1}{k-1}$  for $n\ge 2sk-s$. Hence
$\max(N_1,N_2)\le \bnk-\binom{n-s+1}{k}-\frac{1}{s+1}\binom{n-k-s+1}{k-1}$. In addition, Proposition \ref{prop:max-no-s-disj} shows that we may use the estimate $\log M \le n\binom{sk}{k}$. Altogether we have
\begin{align}\label{max-no-s-disj}
\notag 2\log M+\max(N_1,N_2)-N_0 &\le 2n\binom{sk}{k}-\frac{1}{s+1}\binom{n-k-s+1}{k-1}\\
\notag &= 2n\binom{sk}{k}-\frac{k}{(s+1)(n-k-s+2)}\binom{n-k-s+2}{k}\\
&\le 2n\binom{sk}{k}\left[1-\frac{k}{2(s+1)(n-k-s+2)n}\left(\frac{n-k-s+2}{sk}\right)^k \right].
\end{align}
As $n\ge 2sk+38s^4$ and $s\ge 2$, we find $\frac{n-k-s+2}{sk} \ge \frac32 \left(1+\frac{74s^3}{3k}\right)$, and hence
\[
\left(\frac{n-k-s+2}{sk}\right)^{k-2}\ge \left(\frac32\right)^{k-2}\left(1+\frac{74s^3}{3k}\right)^{k-2}\ge \frac{k}{2}\cdot\frac{74(k-2)s^3}{3k}=\tfrac{37}{3}(k-2)s^3.
\]
This implies
\begin{align*}
\frac{k}{2(s+1)(n-k-s+2)n}\left(\frac{n-k-s+2}{sk}\right)^k&=\frac{n-k-s+2}{2(s+1)s^2kn}\left(\frac{n-k-s+2}{sk}\right)^{k-2}\\
&\ge \frac{37s(k-2)(n-k-s+2)}{6(s+1)kn} \ge \frac{37}{36}
\end{align*}
as $s/(s+1)\ge 2/3, (k-2)/k\ge 1/3$ and $(n-k-s+2)/n\ge 3/4$. Substituting this inequality into \eqref{max-no-s-disj}, we obtain
\[
2\log M + \max(N_1,N_2)-N_0 \le -\frac{1}{18}n\binom{sk}{k}\rightarrow -\infty.\qedhere
\]
\end{proof}

\subsection{Intersecting set systems}
In this section we shall use the removal lemma for disjoint sets (Lemma \ref{lem:setremoval}) to show that intersecting set systems in $\binom{[n]}{k}$ are typically trivial when $n \ge 2k + C \sqrt{k \ln k}$ for some positive constant $C$. Since the number of trivial intersecting families is 
\[
n\cdot 2^{{n-1\choose k-1}}\pm {n\choose 2} \cdot  2^{{n-2\choose k-2}}=(n+o(1)) 2^{{n-1\choose k-1}},
\]
it suffices to prove that there are $o(2^{\bekr})$ non-trivial intersecting families. 

We need a few classic theorems from extremal set theory. The first is a theorem of Hilton and Milner~\cite{hm67}, bounding the cardinality of a non-trivial uniform intersecting family. 

\begin{theorem}[Hilton and Milner]\label{thm:hilton-milner}
Let $\mc{F}\subset\binom{[n]}{k}$ be a non-trivial intersecting family with $k\ge 2$ and $n\ge 2k+1$. Then $\card{\mc{F}}\le \bekr-\bnkk+1$.
\end{theorem}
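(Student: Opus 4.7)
My plan is to prove the Hilton--Milner theorem via the classical shifting method.

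\emph{Setup.} For $i<j$ in $[n]$ and a family $\mc F \subseteq \binom{[n]}{k}$, the $(i,j)$-shift $S_{ij}(\mc F)$ replaces each $F \in \mc F$ with $j \in F$, $i \notin F$ by $(F\setminus\{j\})\cup\{i\}$ whenever the target is not already a member of $\mc F$. It is standard that $|S_{ij}(\mc F)| = |\mc F|$ and that $S_{ij}$ preserves the intersecting property. Iterating such operations over all pairs $i<j$ eventually produces a shifted family, that is, one invariant under every $S_{ij}$.

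\emph{Preserving non-triviality.} The subtle point is that a shift may turn a non-trivial intersecting family into a trivial one (a star), in which case the only bound we obtain is the Erd\H{o}s--Ko--Rado bound $\bekr$, which is too weak. I would resolve this via Frankl's trick of applying shifts greedily, performing $S_{ij}$ only when the resulting family remains non-trivial. Upon termination, either the family is fully shifted and still non-trivial---in which case we proceed to the next step---or there is a pair $(i,j)$ every shift of which would trivialize the family. A short direct analysis of the latter structural restriction (namely, that all sets not containing the prospective common element share a single coordinate) already yields the Hilton--Milner bound by inspection, so we may assume the former.

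\emph{Analyzing the shifted, non-trivial case.} Non-triviality plus shiftedness give that $F_0 = \{2, \ldots, k+1\} \in \mc F$, and every set of $\mc F$ meets $F_0$. Partition $\mc F = \mc F_1 \sqcup \mc F_2$ according to whether $1$ belongs to the set. Sets in $\mc F_1$ contain $1$ and meet $\{2,\ldots,k+1\}$, which gives $|\mc F_1| \le \bekr - \bnkk$ directly by counting $(k-1)$-subsets of $[2,n]$ meeting $[2,k+1]$. For the joint bound I would exploit cross-intersection: for each $G \in \mc F_2 \setminus \{F_0\}$, any $(k-1)$-set $T \subseteq [2,n]\setminus G$ with $T \cap [2,k+1] \neq \emptyset$ would, by shiftedness, have $\{1\}\cup T \in \mc F_1$ were it not for the cross-intersection with $G$. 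Carefully matching each excess $G$ with a distinct blocked $T$ yields the tradeoff $|\mc F_1|+|\mc F_2| \le \bekr - \bnkk + 1$.

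\emph{Main obstacle.} The principal difficulty is precisely the last step---the joint tradeoff bound. A blanket claim of $|\mc F_2|\le 1$ is false in general; for example, $\binom{[2k-1]}{k}$ viewed inside $\binom{[n]}{k}$ is shifted, intersecting and non-trivial with $|\mc F_2|$ of size $\binom{2k-2}{k}$, though $\mc F_1$ is correspondingly small. Setting up a charging scheme in which every $G \in \mc F_2 \setminus \{F_0\}$ is assigned a distinct blocker in the ``potential'' star---and ruling out that different $G$'s can share the same blocker, using $n \ge 2k+1$ to guarantee enough room---is the delicate technical heart of the argument.
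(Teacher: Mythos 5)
The paper offers no proof of this statement to compare against: Theorem~\ref{thm:hilton-milner} is imported verbatim from Hilton and Milner (1967) and used as a black box in the proof of Theorem~\ref{thm:aa}, so your attempt must be judged against the classical arguments (Hilton--Milner's original one, or the Frankl--F\"uredi shifting proof whose outline you follow). The steps you actually carry out are correct: shifts preserve size and the intersecting property; a shifted intersecting family with a member avoiding $1$ contains $F_0=\{2,\dots,k+1\}$; and every member meets $F_0$, so $\card{\mc{F}_1}\le \bekr-\bnkk$. But both places where the theorem's real content lives are left open. First, the degenerate branch is not settled ``by inspection.'' If $S_{ij}$ would trivialise the family, then every member meets $\{i,j\}$; writing $\mc{A}_i,\mc{A}_j$ for the members containing exactly one of $i,j$ and intersecting with a fixed $B\setminus\{j\}$, $B\in\mc{A}_j$, gives only $\card{\mc{F}}\le\binom{n-2}{k-2}+\binom{n-2}{k-1}-\bnkk+\card{\mc{A}_j}$, which matches the Hilton--Milner bound only when $\card{\mc{A}_j}\le 1$. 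Since $\card{\mc{A}_j}$ can be large, this branch requires exactly the same two-family tradeoff as the main case, so deferring it to inspection hides a genuine argument.

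Second, and more seriously, the final charging step is only described, and the mechanism as specified cannot work. The only inputs it uses are that $F_0\in\mc{F}_2$, that every member meets $F_0$, and that any $(k-1)$-set $T$ disjoint from some $G\in\mc{F}_2$ is ``blocked'' from $\mc{F}_1$. These inputs also hold for the trivial star $\mc{F}=\{F:2\in F\}$ with $F_0=[2,k+1]$, where $\card{\mc{F}}=\bekr$ exceeds the Hilton--Milner bound and, more tellingly, the total number of blocked sets is $\binom{n-2}{k-1}-\bnkk$, which is strictly smaller than $\card{\mc{F}_2}-1=\binom{n-2}{k-1}-1$; so no injection of the kind you propose exists there. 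That family is of course excluded by your hypotheses (it is trivial and not shifted), but this is precisely the point: the charging scheme must exploit shiftedness and non-triviality in some essential, as-yet-unspecified way to produce $\card{\mc{F}_2}-1$ \emph{distinct} blocked sets, and verifying the resulting Hall-type condition is essentially equivalent to the theorem itself. This is where the known proofs invest their work -- via a Kruskal--Katona-based cross-intersecting inequality, or Frankl--F\"uredi's finer shifting analysis -- and it is exactly the step you yourself flag as the ``delicate technical heart.'' As it stands, the proposal is a reasonable roadmap along classical lines, but not a proof.
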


The next result we require is a theorem of Kruskal~\cite{Kruskal} and Katona~\cite{Katona}. For a family $\mc{F}\subset \binom{[n]}{r}$, its {\em $s$-shadow} in $\binom{[n]}{s}$, denoted $\partial^{(s)}\mc{F}$, is the family of those $s$-sets contained in some member of $\mc{F}$. For $x\in\bR$ and $r\in \bN$, we define the \emph{generalised binomial coefficient} $\binom{x}{r}$ by setting
\[
\binom{x}{r}=\frac{x(x-1)\hdots(x-r+1)}{r!}.
\]
The following convenient formulation of the Kruskal-Katona theorem is due to Lov\'asz~\cite{Lovasz-KK}.

\begin{theorem}[Lov\'asz]\label{thm:kruskal-katona}
Let $n, r$ and $s$ be positive integers with $s\le r\le n$. If $\mc{F}$ is a subfamily of $\binom{[n]}{r}$ with $\card{\mc{F}}=\binom{x}{r}$ for some real number $x\ge r$, then $\card{\partial^{(s)}\mc{F}}\ge \binom{x}{s}$.
\end{theorem}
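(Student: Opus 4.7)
The plan is to prove Lovász's form of Kruskal–Katona by a standard compression argument, reducing first to the one-step case $s = r - 1$. Iterating that one-step bound along the chain $\mc{F} \to \partial^{(r-1)}\mc{F} \to \partial^{(r-2)}\mc{F} \to \cdots \to \partial^{(s)}\mc{F}$, using the identity $\partial^{(t)}\partial^{(t+1)}\mc{F} = \partial^{(t)}\mc{F}$, immediately yields the general statement. So the task reduces to showing that $|\mc{F}| \geq \binom{x}{r}$ implies $|\partial^{(r-1)}\mc{F}| \geq \binom{x}{r-1}$, which I would prove by double induction on $r$ and on $n$, the base cases $r = 1$ and $r = s$ both being trivial.

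For the inductive step I would first apply $(i,j)$-shifts for $i < j$: replace each $F \in \mc{F}$ with $j \in F$, $i \notin F$ by $(F \setminus \{j\}) \cup \{i\}$ whenever the image is not already in $\mc{F}$. A standard bijective check pairing off "new" shadow elements with pre-existing ones shows that such shifts preserve $|\mc{F}|$ while not increasing $|\partial^{(r-1)}\mc{F}|$, so we may assume $\mc{F}$ is shifted. Partition $\mc{F}$ according to the element $1$: set $\mc{F}_1 = \{F \in \mc{F} : 1 \in F\}$, $\mc{F}_0 = \mc{F} \setminus \mc{F}_1$, and $\mc{F}_1' = \{F \setminus \{1\} : F \in \mc{F}_1\} \subseteq \binom{[n] \setminus \{1\}}{r-1}$. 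Shiftedness then yields the crucial inclusion $\partial^{(r-1)}\mc{F}_0 \subseteq \mc{F}_1'$: for $F \in \mc{F}_0$ and any $j \in F$, the shifted property forces $(F \setminus \{j\}) \cup \{1\} \in \mc{F}_1$, so $F \setminus \{j\} \in \mc{F}_1'$. Consequently the shadow decomposes cleanly as $|\partial^{(r-1)}\mc{F}| = |\mc{F}_1'| + |\partial^{(r-2)}\mc{F}_1'|$, the first summand counting shadow sets avoiding $1$ and the second those containing $1$.

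Write $|\mc{F}_1'| = \binom{y}{r-1}$ and $|\mc{F}_0| = \binom{z}{r}$ for appropriate real $y, z \geq r-1$. Induction on $r$ applied to $\mc{F}_1'$ gives $|\partial^{(r-2)}\mc{F}_1'| \geq \binom{y}{r-2}$, while induction on $n$ applied to $\mc{F}_0$ (combined with $\partial^{(r-1)}\mc{F}_0 \subseteq \mc{F}_1'$) gives $\binom{y}{r-1} \geq \binom{z}{r-1}$, hence $y \geq z$ by strict monotonicity of $t \mapsto \binom{t}{r-1}$ on $[r-1, \infty)$. Pascal's identity in real arguments then produces $|\mc{F}| = \binom{y}{r-1} + \binom{z}{r} \leq \binom{y}{r-1} + \binom{y}{r} = \binom{y+1}{r}$ and $|\partial^{(r-1)}\mc{F}| \geq \binom{y}{r-1} + \binom{y}{r-2} = \binom{y+1}{r-1}$, forcing $x \leq y+1$ and therefore $|\partial^{(r-1)}\mc{F}| \geq \binom{x}{r-1}$, as required.

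The main obstacle I expect is the real-analytic handling of the generalized binomial coefficient $t \mapsto \binom{t}{r}$ — in particular its continuity, strict monotonicity on $[r-1,\infty)$, and the validity of Pascal's identity $\binom{t+1}{r} = \binom{t}{r} + \binom{t}{r-1}$ for real $t$ — which underlies every comparison in the final algebraic step. Secondary obstacles are the careful bijective verification that shifting does not increase the shadow and the bookkeeping needed to ensure every sub-invocation of the induction falls within the hypothesis $x \geq r$ (for instance when $|\mc{F}_0|$ is very small, in which case $z < r$ and one reduces directly to the $\mc{F}_1'$ term).
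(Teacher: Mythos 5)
The paper does not prove this statement at all: Theorem~\ref{thm:kruskal-katona} is quoted as a known result, cited to Lov\'asz's \emph{Combinatorial problems and exercises}, and used as a black box in the proof of Theorem~\ref{thm:aa}. So there is no in-paper argument to compare against; judged on its own, your sketch is correct and is essentially the standard compression proof of the Lov\'asz form (Frankl's argument). The reduction to the one-step case via $\partial^{(t)}\partial^{(t+1)}\mc{F}=\partial^{(t)}\mc{F}$ is fine because you state the one-step claim with a ``$\ge$'' hypothesis; the shifting lemma ($(i,j)$-shifts preserve size and do not enlarge the shadow, and the process terminates in a shifted family) is standard; the inclusion $\partial^{(r-1)}\mc{F}_0\subseteq\mc{F}_1'$ and the exact split $\card{\partial^{(r-1)}\mc{F}}=\card{\mc{F}_1'}+\card{\partial^{(r-2)}\mc{F}_1'}$ are correct; and the final algebra with real-argument Pascal and strict monotonicity of $t\mapsto\binom{t}{r-1}$ on $[r-1,\infty)$ goes through, giving $y\ge z$, $x\le y+1$, and $\card{\partial^{(r-1)}\mc{F}}\ge\binom{y+1}{r-1}\ge\binom{x}{r-1}$. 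The loose ends you flag are genuinely routine: Pascal and monotonicity for generalised binomials are one-line computations; if $\mc{F}_0=\emptyset$ one takes $z=r-1$ and the same algebra works (the only way $z<r$ can occur, since $\card{\mc{F}_0}\ge 1$ forces $z\ge r$); and $y\ge r-1$ needs $\mc{F}_1\neq\emptyset$, which holds for any nonempty shifted family (a $(1,\min F)$-shift would otherwise apply). One small wrinkle worth tidying in a write-up: your double induction is really on the pair $(n,r)$ (you invoke the claim for $(n-1,r)$ on $\mc{F}_0$ and for $(n-1,r-1)$ on $\mc{F}_1'$), so the base cases should be phrased as $n=r$ and $r\le 2$ rather than ``$r=s$'', but this is bookkeeping, not a gap.
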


With these results in hand, we now prove Theorem~\ref{thm:aa}.

\begin{proof}[Proof of Theorem \ref{thm:aa}]
The statement has been established for $n \ge 3k + 8 \ln k$ in \cite[Theorem 1.4]{BDDLS}, and so we may assume $n = 2k + s$ for some integer $s$ with $C \sqrt{k \ln k} \le s \le k + 8 \ln k$.

For each $\l \in \bN$, let $N_{\l}$ denote the number of maximal non-trivial intersecting families of size $\bekr - \l$.  By Theorem \ref{thm:hilton-milner}, we know $N_{\l} = 0$ for $\l < \bnkk - 1$.  By taking a simple union bound over the subfamilies of these families, we can bound the number of non-trivial intersecting families by
\[ \sum_{\l = \bnkk - 1}^{\bekr} N_{\l} 2^{\bekr - \l} = \left( \sum_{\l} N_{\l} 2^{-\l} \right) 2^{\bekr}, \]
so it suffices to show $\sum_{\l} N_{\l} 2^{-\l} = o(1)$.

By a result of Balogh et al. \cite[Proposition 2.2]{BDDLS}, we know the total number of maximal intersecting families can be bounded by $\sum_{\l} N_{\l} \le 2^{\frac 12 n \binom{2k}{k}}$, and so we have
\[ \sum_{\l \ge n \binom{2k}{k}} N_{\l} 2^{- \l} \le 2^{- n \binom{2k}{k}} \cdot \sum_{\l \ge n \binom{2k}{k} } N_{\l} \le 2^{- \frac12 n \binom{2k}{k}} = o(1). \]
Hence it suffices to show
\begin{equation} \label{eqn:bound}
\sum_{\l = \bnkk-1}^{n \binom{2k}{k}} N_{\l} 2^{- \l} = o(1).
\end{equation}

\medskip

We fix some integer $\l$ with $\bnkk - 1 \le \l \le n \binom{2k}{k}$, and fix some maximal intersecting family $\mc{F}$ of size $\bekr - \l$.  Let $\mc{S}$ be the star that minimises $\card{\mc{F}\De \mc{S}}$, and without loss of generality assume that $n$ is the center of $\mc{S}$. Let $\mc{A} = \mc{F} \setminus \mc{S}$, and $t = \card{\mc{A}}$.  Let $\mc{B} = \mc{S} \setminus \mc{F}$, and note that $\card{\mc{B}} = t + \l$.

Let $\mc{P} = \{ [n-1] \setminus A : A \in \mc{A} \}$, and observe that $\mc{P} \subseteq \binom{[n-1]}{n-k-1}$, since $n \notin A$ for all $A \in \mc{A}$.  Let $\mc{Q} = \{ B \setminus \{n\} : B \in \mc{B} \} \subseteq \binom{[n-1]}{k-1}$.  We claim that $\partial^{(k-1)} \mc{P} = \mc{Q}$.

Indeed, suppose $H \in \partial^{(k-1)} \mc{P}$.  Then there is some $A \in \mc{A}$ such that $H \subset [n-1] \setminus A$, and so $H \cap A = \emptyset$. As $n \notin A$, this forces $(\{n\} \cup H) \cap A = \emptyset$, and so $\{n\} \cup H \notin \mc{F}$. Hence $\{n\} \cup H \in \mc{B}$, giving $H \in \mc{Q}$.

For the opposite direction, suppose $H \notin \partial^{(k-1)} \mc{P}$.  Then, following the same argument as above, $(\{n\} \cup H) \cap A \neq \emptyset$ for all $A \in \mc{A}$.  By maximality of $\mc{F}$, we must have $\{n\} \cup H \in \mc{F}$, and thus $\{n\} \cup H \notin \mc{B}$, resulting in $H \notin \mc{Q}$.

We shall show that $\l \ge 2 n t$.  First let us see why this implies \eqref{eqn:bound}.  For each family $\mc{F}$ counted by $N_{\l}$, it suffices to provide the star $\mc{S}$ and the family $\mc{A}$ outside the star\footnote{For every choice of $\mc{F}$ there is a unique $\mc{A}$, but not every $\mc{A}$ corresponds to a maximal family $\mc{F}$}.  Indeed, since $\mc{Q} = \partial^{(k-1)} \mc{P}$, we can compute $\mc{F} \cap \mc{S}$, and hence completely determine $\mc{F}$.  Moreover, $\card{\mc{A}} = t \le \l / (2 n)$.  Thus
\[ N_{\l} 2^{-\l} \le n \cdot \binom{n-1}{k}^{\l / (2 n)} 2^{-\l} < n \cdot 2^{ - \l/2}, \textrm{ and so } \sum_{\l = \bnkk - 1}^{n \binom{2k}{k}} N_{\l} 2^{-\l} \le \frac{2n}{\sqrt{2} - 1} \cdot 2^{- \frac12 \bnkk } = o(1). \]

\bigskip

It remains to show $\l \ge 2nt$.  Letting $\mc{P}$ and $\mc{Q}$ be as above, recall that $\mc{Q} = \partial^{(k-1)} \mc{P}$. According to Theorem \ref{thm:kruskal-katona}, if $x$ is a real number so that $t = \card{\mc{P}} = \binom{x}{n-k-1}$, then $\l + t = \card{\mc{Q}} \ge \binom{x}{k-1}$.

Now observe that by Lemma~\ref{lem:setremoval}, we have $t \le C' n \l$ for some absolute constant $C'$.  Since $\l \le n \binom{2k}{k}$, this implies $t \le C' n^2 \binom{2k}{k}$.  Since $n = 2k+s$, we have $t = \binom{x}{n-k-1} = \binom{x}{k+s - 1}$. We next show that $x < 2k + \floor{\frac34 s}$. If not, then
\begin{align*}
	t = \binom{x}{k+s - 1} &\ge \binom{2k + \floor{\frac34 s}}{k + s - 1}= \binom{2k}{k}\cdot\frac{\binom{2k+s-1}{k+s-1}}{\binom{2k}{k}} \cdot \frac{\binom{2k + \floor{\frac34 s}}{k + s - 1}}{\binom{2k+s-1}{k+s-1}} \\
	&= \binom{2k}{k}\cdot\prod_{j=1}^{s-1} \frac{2k + j}{k + j} \cdot\prod_{j=\floor{\frac34 s}+1}^{s-1} \frac{k - s + 1+j}{2k + j} \\
	&\ge \binom{2k}{k}\left( \frac{2k + s}{k + s} \right)^{s-1} \left( \frac{ k - \frac14 s}{2k + \frac34 s} \right)^{\frac 14 s} .
\end{align*}
The bases of the exponential factors are minimised when $s$ is as large as possible; substituting $s \le k + 8 \ln k < 1.1k$, we can lower bound the coefficient of $\binom{2k}{k}$ by
\[ \frac{2.1}{3.1} \left( \frac{3.1}{2.1} \right)^s \left( \frac{0.725}{2.825} \right)^{\frac14 s} > \frac23 1.05^s > n^3 \]
as $s > C\sqrt{k \ln k} \ge 100 \ln n$, contradicting our upper bound $t \le C'n^2\binom{2k}{k}$.

Suppose, then, that $x \le 2k + \floor{\frac34 s}-1$. Since $t = \binom{x}{k + s - 1}$ and $\l + t \ge \binom{x}{k-1}$, we have
\[ 
\frac{\l}{t} \ge \frac{\binom{x}{k-1}}{\binom{x}{k + s - 1}} - 1 = \prod_{j = k}^{k + s - 1} \frac{ j }{x + 1 - j } - 1.\]
This product is decreasing in $x$, so we can substitute our upper bound $x \le 2k + \floor{\frac34 s} - 1$ to find
\[ 
\frac{\l}{t} \ge \prod_{j=k}^{k + s - 1} \frac{j}{2k + \floor{\frac34 s} - j } - 1 = \prod_{j=\floor{\frac34 s}+1}^{s-1} \frac{ k + j}{k + \floor{\frac34 s} - j } - 1 \ge \left( 1 + \frac{3s}{4k} \right)^{\frac14 s - 1} - 1. \]
This is increasing in $s$, so plugging in the lower bound $s \ge C \sqrt{k \ln k}$, we have
\[ \frac{\l}{t} \ge \left( 1 + \frac34 C \sqrt{ \frac{ \ln k }{k} } \right)^{\frac{C}{4} \sqrt{k \ln k} - 1} - 1 > e^{\frac{3C}{64} \ln k} - 1 \ge 8k > 2n,\]
as required.  This completes the proof.
\end{proof}



\section{Concluding remarks}\label{sec:concluding}

We close by offering some final remarks and open problems related to the supersaturation problems discussed in this paper.

\subsection{Supersaturation for permutations}

Theorem~\ref{thm-supsat-perm} shows, for $k \le c n^{1/2}$ and $s$ (very) close to $k(n-1)!$, one minimises the number of disjoint pairs in a family of $s$ permutations by selecting them from pairwise-disjoint cosets.  This leaves large gaps between the ranges where we know the answer to the supersaturation problem, and it would be very interesting to determine the correct behaviour throughout.  For instance, which family of $1.5(n-1)!$ permutations minimises the number of disjoint pairs?

Note that the derangement graph is $d_n$-regular, and so we can apply Lemma~\ref{lem:small-large} to determine the optimal families for sizes close to $k(n-1)!$ when $k \ge n - cn^{1/2}$ by taking complements.  However, the complement of a union of pairwise disjoint cosets is again a union of pairwise disjoint cosets, and hence there may well be a nested sequence of optimal families for this problem.  One candidate would be the initial segments of the lexicographic order on $S_n$, where $\pi < \si$ if and only if $\pi_j < \si_j$ for $j = \min \{ i \in [n] : \pi_i \neq \si_i \}$.

\subsection{Set systems of very large uniformity}  For set families, we improved the range of uniformities for which the small initial segments of the lexicographic order are known to be optimal.  In Corollary~\ref{cor:setclosetostar}, which applies when $n = \Omega(r^3 k^2)$, we handled the case where the family is a little larger than the union of $r$ stars.  However, if one restricts the size of the set families even further, one can obtain optimal bounds on $n$.  For instance, Katona, Katona and Katona~\cite{KKK} showed that adding one set to a full star is always optimal.

\begin{prop}\label{prop:katona}
Suppose $n \ge 2k+1$. Any system $\mc{F}\subseteq\binom{[n]}{k}$ with $\card{\mc{F}}=\bekr+1$ contains at least $\bnkk$ disjoint pairs.
\end{prop}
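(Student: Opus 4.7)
The plan is to combine the Erd\H{o}s--Ko--Rado theorem with the Hilton--Milner theorem (Theorem~\ref{thm:hilton-milner}) by analysing a maximum intersecting subfamily of $\mc{F}$.

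First I would dispose of the trivial case $k = 1$ (any two distinct singletons are disjoint). For $k \ge 2$, fix a maximum intersecting subfamily $\mc{I} \subseteq \mc{F}$; by Erd\H{o}s--Ko--Rado, $|\mc{I}| \le \bekr$, so $\mc{F} \setminus \mc{I}$ is nonempty. The maximality of $\mc{I}$ implies that for every $E \in \mc{F} \setminus \mc{I}$, the family $\mc{I} \cup \{E\}$ fails to be intersecting, hence $E$ is disjoint from at least one $I \in \mc{I}$. Counting only these cross disjoint pairs gives the crude bound
\[ \disj(\mc{F}) \ge |\mc{F} \setminus \mc{I}| = \bekr + 1 - |\mc{I}|. \]

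I would then split into two cases according to whether $|\mc{I}|$ exceeds the Hilton--Milner threshold $\bekr - \bnkk + 1$. In the small-$\mc{I}$ regime, $|\mc{I}| \le \bekr - \bnkk + 1$, the crude bound immediately delivers $\disj(\mc{F}) \ge \bnkk$.

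In the large-$\mc{I}$ regime, $|\mc{I}| > \bekr - \bnkk + 1$, Hilton--Milner forces $\mc{I}$ to be contained in some star $\mc{S}_x = \{A \in \binom{[n]}{k} : x \in A\}$. A short argument using the maximality of $\mc{I}$ shows that in fact $\mc{I} = \mc{F} \cap \mc{S}_x$: any other $A \in \mc{F} \cap \mc{S}_x$ would share $x$ with every member of $\mc{I}$, so $\mc{I} \cup \{A\}$ would be a strictly larger intersecting subfamily. Setting $r := \bekr - |\mc{I}|$ (so $0 \le r \le \bnkk - 2$), each of the $r+1$ sets in $\mc{F} \setminus \mc{I}$ avoids $x$, and among the $\bnkk$ sets of $\mc{S}_x$ disjoint from it, at most $r$ can be missing from $\mc{I}$. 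Summing yields $\disj(\mc{F}) \ge (r+1)(\bnkk - r)$, and the identity $(r+1)(\bnkk - r) = \bnkk + r(\bnkk - 1 - r)$ shows this is at least $\bnkk$ throughout the range $0 \le r \le \bnkk - 1$.

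The main point to handle carefully is the identification $\mc{I} = \mc{F} \cap \mc{S}_x$ in the large-$\mc{I}$ regime; once that is secured, everything else is routine counting. Hilton--Milner plays the crucial role of upgrading the crude ``one disjoint pair per added set'' bound (wasteful unless $\mc{I}$ is small) into a structural bound strong enough to close the remaining range.
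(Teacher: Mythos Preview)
Your proof is correct. Note, however, that the paper does not actually prove this proposition: it is stated as a result of Katona, Katona and Katona~\cite{KKK} and then used as a black box, so there is no ``paper's own proof'' to compare against.

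That said, your argument fits naturally with the paper's approach to the \emph{next} (unnamed) proposition, which extends Proposition~\ref{prop:katona} to families of size up to $\bekr + c\tfrac{n-2k}{n}\bnkk$. There the paper locates a nearby star $\mc{S}$ via the removal lemma (Lemma~\ref{lem:setremoval}), sets $p = \bekr - \card{\mc{F}\cap\mc{S}}$, and then bounds $\disj(\mc{F}) \ge (p+t)(\bnkk - p) = t\bnkk + p(\bnkk - p - t)$, exactly your computation with $r=p$ and $t=1$. The difference is only in how the star is found: you use Hilton--Milner (which is tight enough when $\card{\mc{F}} = \bekr + 1$), whereas the paper needs the quantitatively stronger removal lemma to handle larger surplus $t$. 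Your argument is thus a clean self-contained proof of the base case that avoids citing~\cite{KKK}.
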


By applying the removal lemma (Lemma~\ref{lem:setremoval}), we can extend this exact result to a larger range of family sizes.

\begin{prop}
For some positive constant $c$, the following holds. Provided $n\ge 2k+2$ and $0 \le s \le \bekr+c\cdot\frac{n-2k}{n}\bnkk$, $\mathcal{L}(n,k,s)$ minimises the number of disjoint pairs among all systems of $s$ sets in $\binom{[n]}{k}$.
\end{prop}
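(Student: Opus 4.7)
The plan is to separate the trivial regime $s \le \bekr$, where $\mc{L}(n,k,s)$ sits inside the star at $1$ and is therefore intersecting (so $\disj(\mc{L}(n,k,s)) = 0$ is vacuously optimal), from the main regime $s = \bekr + t$ with $1 \le t \le c\cdot\frac{n-2k}{n}\bnkk$. In the main regime, $\mc{L}(n,k,s)$ consists of the full star at $1$ together with $t$ further sets from the lex order, each of which contains $2$ but not $1$; a direct count gives $\disj(\mc{L}(n,k,s)) = t\bnkk$, since each added set misses the centre of the full star and is therefore disjoint from exactly $\bnkk$ of its members, while the $t$ extras all share the element $2$ and so contribute no disjoint pair among themselves. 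Let $\mc{F}$ be an extremal family of size $s$; by optimality $\disj(\mc{F}) \le t\bnkk$, and my goal is to prove the reverse inequality.

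The first step is to apply the removal lemma (Lemma~\ref{lem:setremoval}) with $\l = 1$, $\al = -t/\bekr$ and $\be = \disj(\mc{F})/(\bekr\bnkk) \le t/\bekr$. Using $t \le c\cdot\frac{n-2k}{n}\bnkk \le c\cdot\frac{n-2k}{n}\bekr$, both $2\card{\al}$ and $\card{\be}$ are at most $2c\cdot\frac{n-2k}{n}$, which lies in the admissible range of the lemma provided $c$ is small enough relative to its absolute constant $C$. The lemma then produces a single star $\mc{S}$ satisfying
\[
\card{\mc{F}\De\mc{S}} \;\le\; C(\card{\al}+2\be)\tfrac{n}{n-2k}\bekr \;\le\; 3Ct\cdot\tfrac{n}{n-2k} \;\le\; 3Cc\,\bnkk,
\]
which I make strictly less than $\bnkk$ by choosing $c<1/(3C)$.

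The central step is then a swap argument showing that $\mc{S}\subseteq\mc{F}$. Set $\mc{A} = \mc{F}\setminus\mc{S}$ and $\mc{B} = \mc{S}\setminus\mc{F}$, and note $\mc{A}\neq\emptyset$ since $\card{\mc{F}}>\card{\mc{S}}$. Suppose for contradiction that $\mc{B}\neq\emptyset$; pick $F\in\mc{A}$, $G\in\mc{B}$, and form $\mc{F}' = (\mc{F}\setminus\{F\})\cup\{G\}$. Because every set of $\mc{S}$ contains its centre, $\disj(G,\mc{F}\cap\mc{S}) = 0$ and hence $\disj(G,\mc{F}\setminus\{F\})\le\card{\mc{A}}$; in contrast $F$ avoids that centre, so $\disj(F,\mc{S}) = \bnkk$ and hence $\disj(F,\mc{F}) \ge \bnkk - \card{\mc{B}}$. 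Combining,
\[
\disj(\mc{F}') - \disj(\mc{F}) \;\le\; \card{\mc{A}} + \card{\mc{B}} - \bnkk \;=\; \card{\mc{F}\De\mc{S}} - \bnkk \;<\; 0,
\]
contradicting the extremality of $\mc{F}$.

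With $\mc{S}\subseteq\mc{F}$ established, I write $\mc{F} = \mc{S}\sqcup\mc{A}$ with $\card{\mc{A}}=t$; every set of $\mc{A}$ misses the centre of $\mc{S}$ and is therefore disjoint from exactly $\bnkk$ members of $\mc{S}$, giving $\disj(\mc{F}) = t\bnkk + \disj(\mc{A}) \ge t\bnkk = \disj(\mc{L}(n,k,s))$, as required. The principal technical obstacle is verifying that the parameters fall inside the admissible regime of Lemma~\ref{lem:setremoval} and that the resulting bound on $\card{\mc{F}\De\mc{S}}$ is strictly smaller than $\bnkk$; the comparison between the removal-lemma error (of order $tn/(n-2k)$) and $\bnkk$ is precisely what forces the factor $\tfrac{n-2k}{n}$ in the permissible range of $s$.
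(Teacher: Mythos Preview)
Your proof is correct and follows essentially the same approach as the paper's: both apply Lemma~\ref{lem:setremoval} with $\ell=1$ to an optimal family to find a star $\mc{S}$ with $\card{\mc{F}\De\mc{S}}$ a small fraction of $\bnkk$, and then finish with an elementary argument. The only difference is in that final step: you use a swap argument to deduce $\mc{S}\subseteq\mc{F}$ (whence $\disj(\mc{F})=t\bnkk+\disj(\mc{A})\ge t\bnkk$), whereas the paper argues directly that if $p=\card{\mc{S}\setminus\mc{F}}$ then $\disj(\mc{F})\ge (p+t)\bigl(\bnkk-p\bigr)=t\bnkk+p\bigl(\bnkk-p-t\bigr)\ge t\bnkk$, avoiding the swap entirely. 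Your route yields the extra structural conclusion that an extremal family contains a full star; the paper's route is a couple of lines shorter.
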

\begin{proof}
Let $C$ be the positive constant from Lemma~\ref{lem:setremoval} and set $c=(20C)^{-2}$. Suppose $\mc{F}\subseteq \binom{[n]}{k}$ is a family with $\card{\mc{F}}=\bekr+t$ for some $1\le t \le c\cdot\frac{n-2k}{n}\bnkk$. Letting $s=\bekr+t$, we shall show that $\disj(\mc{F})\ge \disj(\mc{L}_{n,k}(s))=t \bnkk$. Suppose otherwise that $\disj(\mc{F})<t \bnkk$. By Lemma~\ref{lem:setremoval}, there exists a star $\mc{S}$ such that $\card{\mc{F}\De\mc{S}}\le \frac{1}{2}\bnkk$. 
It follows that $\card{\mc{F}\cap\mc{S}}=\bekr-p$ for some integer $p$ with $0\le p\le \frac12\bnkk$. As $\card{\mc{F}}=\bekr+t$ and $\card{\mc{F}\cap\mc{S}}=\bekr-p$, we must have $\card{\mc{F}\setminus\mc{S}}=p+t$. Since each set in $\mc{F}\setminus\mc{S}$ is disjoint from exactly $\bnkk$ sets in the star $\mc{S}$ and $\card{\mc{F}\cap\mc{S}}=\bekr-p$, we conclude $\disj(F,\mc{F}\cap \mc{S}) \ge \bnkk-p>0$ for all $F\in \mc{F}\setminus\mc{S}$. Thus
\begin{align*}
\disj(\mc{F})&\ge \sum_{F\in \mc{F}\setminus \mc{S}}\disj(F,\mc{F}\cap \mc{S})\ge \card{\mc{F}\setminus\mc{S}}\left(\bnkk-p\right)\\
&=(p+t)\left(\bnkk-p\right)
=t\bnkk+p\left(\bnkk-p-t\right)\\
&\ge t\bnkk,
\end{align*}
where the last inequality holds since $p\le \frac12\bnkk$ and $t \le c\cdot\frac{n-2k}{n}\bnkk$.
\end{proof}

\subsection{A counterexample to the Bollob\'as--Leader conjecture}

Finally, it remains to extend the set supersaturation results to larger values of $k$.  Are small initial segments of the lexicographic order still optimal when $k>\sqrt{n}$?

This is not the case when $n=3k-1$, as the following construction shows. Let $s = \binom{n-1}{k-1} + \binom{2k-1}{k} - 1$.  Then $\mc L(n,k,s)$ consists of one full star, and $\binom{2k-1}{k} - 1$ sets from another star, each of which is disjoint from $\binom{n-k-1}{k-1} = \binom{2k-2}{k-1}$ sets from the full star.  Hence $\disj(\mc L(n,k,s)) = \left( \binom{2k-1}{k} - 1 \right) \binom{2k-2}{k-1}$.

Now instead let $\mc F'$ be the family consisting of the $\mc S_1$, the full star with centre $1$, and all but one $k$-element subset of $\{2, 3, \hdots, 2k \}$.  Since $\mc F'$ again consists of a full star and an intersecting family of size $\binom{2k-1}{k} - 1$, we have $\disj(\mc F') = \disj(\mc L(n,k,s))$.  Now form the family $\mc F$ from $\mc F'$ by replacing the set $A = \{1, 2k+1, \hdots, 3k-1\}$ with the missing $k$-set $B$ from $\{2, 3, \hdots, 2k\}$.  We lose $\binom{2k-1}{k}-1$ disjoint pairs when we remove $A$, and gain only $\binom{n-k-1}{k-1} - 1 = \binom{2k-2}{k-1} - 1$ disjoint pairs when we add $B$.  As $\binom{2k-2}{k-1} < \binom{2k-1}{k}$, it follows that $\disj(\mc F) < \disj(\mc L(n,k,s))$, showing the initial segment of the lexicographic order is not optimal.

Bollob\'as and Leader~\cite{Bol-L} conjectured that the solution to the supersaturation problem is always given by an $\ell$-ball.  Given $n, k$ and $s$, an $\ell$-ball of size $s$ is a family $\mc B_{\ell}(n,k,s)$ of $s$ sets such that there is some $r$ with $\left\{ F \in \binom{[n]}{k} : \card{F \cap [r]} \ge \ell \right\} \subseteq \mc B_{\ell}(n,k,s) \subseteq \left\{ F \in \binom{[n]}{k} : \card{F \cap [r+1]} \ge \ell \right\}$.  In particular, the initial segments of the lexicographic order are $1$-balls, while their complements are isomorphic to $k$-balls.

We have shown that the construction $\mc F$ given above has fewer disjoint pairs than the $1$-balls of size $s = \card{\mc F}$.  Computer-aided calculations show that for $n = 3k-1$, $s = \binom{n-1}{k-1} + \binom{2k-1}{k} - 1$ and $5 \le k \le 15$, the $1$-balls have far fewer disjoint pairs than the $\ell$-balls for $\ell \ge 2$, showing that $\mc F$ gives a counterexample to the Bollob\'as--Leader conjecture for these parameters.  The numerical evidence suggests that $\mc F$ should be a counterexample for all $k \ge 5$, but it is difficult to estimate the number of disjoint pairs in $\mc B_{\ell}(3k-1,k,s)$ for $\ell \ge 2$, and so we have been unable to prove this.

\subsection*{Acknowledgement} We would like to thank the anonymous referee for their several valuable suggestions for improving the presentation of this paper.


\medskip

{\footnotesize \obeylines \parindent=0pt
	
J\'ozsef Balogh, Department of Mathematical Sciences, University of Illinois at Urbana-Champaign, IL, USA, and Moscow Institute of Physics and Technology, 9 Institutskiy per., Dolgoprodny, Moscow Region, 141701, Russian Federation.

Shagnik Das, Institut f\"ur Mathematik, Freie Universit\"at Berlin, Germany.

Hong Liu and Maryam Sharifzadeh, Mathematics Institute, University of Warwick, UK.

Tuan Tran,  Department of Mathematics, ETH, Switzerland.

}

\begin{flushleft}
	{\it{E-mail addresses}:
		\tt{jobal@illinois.edu, shagnik@mi.fu-berlin.de, $\lbrace$h.liu.9,~m.sharifzadeh$\rbrace$@warwick.ac.uk, manh.tran@math.ethz.ch}}
\end{flushleft}
%

\appendix

\section{Intersection graphs} \label{app:int-graphs}

In this appendix, we prove Proposition~\ref{prop:int-graphs}, which shows how the intersection graph determines various parameters about the corresponding union of cosets, including its size and number of disjoint pairs.

\subsection{Some preliminaries}

We start by introducing some further notation we will use throughout this appendix.  First, recall that $d_n$ denotes the number of derangements in $S_n$, and that $D_n = d_n + d_{n-1}$.  It will also be convenient for us to define the parameter $D'_n = d_n + 2d_{n-1}$, a quantity that arise later in our proof.

Next, given a graph $G$, $k_t(G)$ denotes the number of $t$-cliques in $G$.  We will further write $K_t(G)$ for the set of these $t$-cliques.  Moreover, given a vertex subset $X \subseteq V(G)$, we denote by $k_{t,X}(G)$ the number of $t$-cliques in $G$ that contain $X$.  In particular, we have $k_t(G) = k_{t,\emptyset}(G) = \card{K_t(G)}$.  Again, we will omit $G$ from the notation when the graph is clear from the context.

Finally, $\bar{P_3}$ is the complement of the path on three vertices, which is the union of an edge and an isolated vertex.  Let $\1_{\bar{P_3}}: V(G) \times \binom{V(G)}{2} \rightarrow \{0,1\}$ be the function defined by setting $\1_{\bar{P_3}}(x,\{y,z\}) = 1$ if and only if $yz$ is the only edge of the induced subgraph $G[\{x,y,z\}]$.  We then denote the number of induced copies of $\bar{P_3}$ in $G$ by $i(\bar{P_3}, G)$, noting that $i(\bar{P_3}, G) = \sum_{x \in V(G)} \sum_{\{y,z\} \in \binom{V(G)}{2}} \1_{\bar{P_3}}(x,\{y,z\})$.

With this additional notation in place, we close these preliminaries with the following crucial observation, which we shall make repeated use of.
\begin{obs}\label{obs:perm}
	If $G$ is the intersection graph of a union of cosets, the following properties hold.
	\begin{itemize}
		\item[\rm (i)] For every subset $X\subset V(G)$, the intersection $\cap_{x\in X}\mc{T}_x$ is non-empty if and only if $G[X]$ is a clique. In this case, $\card{\cap_{x\in X}\mc{T}_x}=(n-\card{X})!$.
		\item[\rm (ii)] If $n \ge 10 \ell^2$, $(i_1,j_1),\ldots,(i_{\l},j_{\l})$ form an $\l$-clique in $G$, and $\pi \in S_n \setminus \bigcup_{s=1}^{\l} \mc T_{(i_s,j_s)}$, then 
		\begin{align*} 
			\disj \left( \pi,\bigcap\nolimits_{s=1}^{\l} \mc{T}_{(i_s,j_s)} \right) =& d_{n-\l}+\left(\l-\card{\{i_1,\ldots,i_{\l}\}\cap \{\pi^{-1}(j_1),\ldots,\pi^{-1}(j_{\l})\}}\right)d_{n-\l-1}\\
			&\pm 7 \l^2(n-\l-2)!.
		\end{align*}
	\end{itemize}
\end{obs}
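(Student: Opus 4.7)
The plan is to dispatch both parts by explicit counting. Part (i) follows directly from the definition of cosets and the intersection graph. Write $X = \{(i_1,j_1), \ldots, (i_m, j_m)\}$: a permutation lies in $\bigcap_{s} \mc{T}_{(i_s, j_s)}$ iff it satisfies $\sigma(i_s) = j_s$ for every $s$. This partial specification extends to a permutation iff it is a consistent partial bijection, i.e., the $i_s$ are pairwise distinct and the $j_s$ are pairwise distinct. By the remark before Definition~\ref{def:lexperm}, this is exactly the condition that $X$ induces a clique in $G$. When consistent, the remaining $n - m$ inputs biject freely onto the remaining $n - m$ outputs, giving $(n-m)!$ permutations.

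For part (ii), set $I = \{i_1, \ldots, i_\l\}$, $J = \{j_1, \ldots, j_\l\}$ and $m = n - \l$. By the clique assumption and part (i), the intersection $\mc{I} := \bigcap_{s=1}^{\l} \mc{T}_{(i_s, j_s)}$ consists of permutations $\sigma$ with $\sigma(i_s) = j_s$ for all $s$; equivalently, $\sigma$ maps $I$ onto $J$ via the fixed bijection $i_s \mapsto j_s$, and restricts to some bijection $\sigma' : [n]\setminus I \to [n]\setminus J$. Disjointness of $\sigma$ from $\pi$ on $I$ is automatic: for every $s$, $\sigma(i_s) = j_s \ne \pi(i_s)$ since $\pi \notin \mc{T}_{(i_s, j_s)}$. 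So we must count bijections $\sigma' : [n]\setminus I \to [n]\setminus J$ with $\sigma'(t) \ne \pi(t)$ for all $t \in [n]\setminus I$. Partition $[n]\setminus I = A_1 \sqcup A_2$ where $A_2 = \pi^{-1}(J)\setminus I$ and $A_1 = [n]\setminus(I \cup \pi^{-1}(J))$; then $|A_2| = \l - k$ and $|A_1| = p := n - 2\l + k$, where $k = |\{i_1,\ldots,i_\l\} \cap \{\pi^{-1}(j_1),\ldots,\pi^{-1}(j_\l)\}|$. For $t \in A_2$, $\pi(t) \in J$ while $\sigma'(t) \in [n]\setminus J$, so the inequality is automatic; only the constraints on $A_1$ are nontrivial.

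Identify $[n]\setminus I$ with $[m]$ so that $A_1$ corresponds to $[p]$, and identify $[n]\setminus J$ with $[m]$ by composing with $\pi|_{A_1}$, so that $\pi$ acts as the identity on $[p]$. The count becomes the number of permutations of $[m]$ with no fixed point in $[p]$. Stratifying by the set $F \subseteq \{p+1,\ldots,m\}$ of fixed points of $\sigma$ outside $[p]$ (of which there are $\l-k$ possible positions), and noting that $\sigma$ restricted to the complement of $F$ must be a derangement, this count equals
\[
\sum_{j=0}^{\l-k} \binom{\l-k}{j}\, d_{m-j} \;=\; d_m + (\l-k)\, d_{m-1} + R,
\]
where $R = \sum_{j=2}^{\l-k}\binom{\l-k}{j}d_{m-j}$. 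Since $m = n-\l$, the main terms match the statement. For $R$, use $\binom{\l-k}{j} \le \l^j/j!$ and $d_{m-j} \le (m-j)!$. Under $n \ge 10\l^2$ we have $m - \l \ge 8\l^2$, so the ratio of successive terms is at most $\tfrac{\l}{(j+1)(m-j)} \le \tfrac{1}{24\l}$ for $j \ge 2$, and $R$ is dominated by its first term $\binom{\l-k}{2}d_{m-2} \le \tfrac{\l^2}{2}(m-2)!$. A geometric series bound then yields $|R| \le 7\l^2(m-2)! = 7\l^2(n-\l-2)!$ with considerable slack.

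The main obstacle is simply the bookkeeping in part (ii): setting up the two identifications $[n]\setminus I \cong [m]$ and $[n]\setminus J \cong [m]$ carefully so that the problem reduces cleanly to counting permutations with forbidden fixed-point positions, after which the derangement decomposition is immediate and the error estimate is routine.
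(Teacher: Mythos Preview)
Your proof is correct. Part (i) coincides with the paper's argument. For part (ii), however, you take a genuinely different and cleaner route than the paper.

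The paper, after reducing to bijections $[n]\setminus I \to [n]\setminus J$ avoiding $\pi$, applies inclusion--exclusion over the set $I$ of positions where an intersection with $\pi$ could occur, obtaining the alternating sum
\[
\disj\!\left(\pi,\bigcap_{s}\mc{T}_{(i_s,j_s)}\right)=\sum_{i=0}^{n-\l-c}(-1)^i\binom{n-\l-c}{i}(n-\l-i)!,
\]
where $c=\l-k$ in your notation. It then writes the $i$th term as $\alpha_i\,(n-\l)!/i!$, approximates $\alpha_i$ by $1-\tfrac{ci}{n-\l}$ with error $O\!\big((ci/(n-\l))^2\big)$, and reassembles the two leading terms as $d_{n-\l}$ and $c\,d_{n-\l-1}$ while separately bounding the tail and the approximation errors.

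You instead observe that the count in question is exactly the number of permutations of $[m]$ (with $m=n-\l$) having no fixed point in a specified set of size $p=m-c$, and stratify by the fixed-point set in the remaining $c$ positions to obtain the \emph{positive} sum $\sum_{j=0}^{c}\binom{c}{j}d_{m-j}$. The first two terms of this sum are already precisely $d_{n-\l}$ and $(\l-k)d_{n-\l-1}$, and the tail from $j\ge 2$ is bounded by a geometric series, giving the error estimate with room to spare. This bypasses the approximation step entirely: there is no alternating sign, no need to estimate $\alpha_i$, and the error bound comes out essentially for free. The paper's approach has the minor advantage of being a mechanical application of inclusion--exclusion that does not require setting up the two identifications $[n]\setminus I\cong[m]\cong[n]\setminus J$, but once those are in place your argument is both shorter and sharper.
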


\begin{proof}
	(i) Since $\mc{T}_{(i,j)}\cap \mc{T}_{(i',j')}=\emptyset$ whenever $(i,j)$ and $(i',j')$ are not adjacent in $G$, we have $\cap_{x\in X}\mc{T}_x=\emptyset$ whenever $G[X]$ is not a clique. Now suppose that $X=\{(i_1,j_1),\ldots,(i_{\l},j_{\l})\}$ spans a clique in $G$. Then we must have $\card{\{i_1,\ldots,i_{\l}\}}=\card{\{j_1,\ldots,j_{\l}\}}=\l$. Hence $\card{\cap_{x\in X}\mc{T}_x}$ is the number of bijections from $[n]\setminus \{i_1,\ldots,i_{\l}\}$ to $[n]\setminus \{j_1,\ldots,j_{\l}\}$, which is $(n-\l)!$.
	
	(ii) Fix a permutation $\pi\in S_n\setminus \left(\mc{T}_{(i_1,j_1)}\cup \ldots \cup \mc{T}_{(i_{\l},j_{\l})}\right)$, that is, a permutation satisfying $\pi(i_s)\neq j_s$ for all $s\in[\l]$. If a permutation $\sigma \in \cap_{s = 1}^{\ell} \mc T_{(i_s, j_s)}$ intersects $\pi$, then we must have $\sigma(x) = \pi(x)$ for some $x \in [n]$.  Let $\mc A_x$ denote the family of such permutations, and observe $\mc A_x = \mc T_{(x,\pi(x))} \cap \left( \cap_{s=1}^{\ell} \mc T_{(i_s,j_s)} \right)$.  
	By part (i), for this intersection to be non-empty, we require $(x,\pi(x))$ to be adjacent to each $(i_s, j_s)$ in the intersection graph, or, equivalently, $x \notin \{ i_1, \hdots, i_{\ell} \} \cup \{ \pi^{-1}(j_1), \hdots, \pi^{-1}(j_{\ell}) \}$.  For brevity, set
	\begin{align*}
		c &= \l-\card{\{i_1,\ldots,i_{\l}\}\cap \{\pi^{-1}(j_1),\ldots,\pi^{-1}(j_{\l})\}} \text{ and } \\
		I &= [n]\setminus\left(\{i_1,\ldots,i_{\l}\}\cup\{\pi^{-1}(j_1),\ldots,\pi^{-1}(j_{\l})\}\right).
	\end{align*}
	Note that $\card{I} = n - \ell - c$, and $\cup_{x \in I} \mc A_x$ is the family of all permutations $\sigma \in \cap_{s = 1}^{\ell} \mc T_{(i_s, j_s)}$ that intersect $\pi$. Applying the inclusion-exclusion formula and part (i), we obtain
	\begin{equation}\label{eqn:obs-ii}
	\disj\left(\pi,\cap_{s=1}^{\l} \mc{T}_{(i_s,j_s)}\right)=\card{\cap_{s=1}^{\l} \mc{T}_{(i_s,j_s)}\setminus\cup_{x\in I}\mc{A}_x}=\sum_{i=0}^{n-\l-c}(-1)^i\binom{n-\l-c}{i}(n-\l-i)!.
	\end{equation}
	
	Let $\al_i$ be the real number such that $\binom{n-\l-c}{i}(n-\l-i)!=\al_i \frac{(n-\l)!}{i!}$. We shall approximate $\al_i$ by a simple function, and then use it to compute $\disj\left(\pi,\cap_{s=1}^{\l} \mc{T}_{(i_s,j_s)}\right)$. By definition,
	\begin{align*}
	\al_i=\frac{(n-\l-c)!(n-\l-i)!}{(n-\l)!(n-\l-c-i)!}=\frac{(n-\l-i)\cdots(n-\l-i-c+1)}{(n-\l)\cdots(n-\l-c+1)}=\prod_{j=0}^{c-1}\left(1-\frac{i}{n-\l-j}\right).
	\end{align*}
	Hence $\al_i\ge \left(1-\frac{i}{n-\l-c+1}\right)^c \ge 1-\frac{ci}{n-\l-c+1}= 1-\frac{ci}{n-\l}-\frac{c(c-1)i}{(n-\l)(n-\l-c+1)}$. On the other hand, $\al_i \le \left(1-\frac{i}{n-\l}\right)^c \le \exp\left\{-\frac{ci}{n-\l}\right\} \le 1-\frac{ci}{n-\l}+\left(\frac{ci}{n-\l}\right)^2$. Since $c \le \ell$ and $n \ge 10\l^2$, we may write $\al_i=1-\frac{ci}{n-\l}+\ep_i$ with $|\ep_i|\le\frac{c^2i^2}{(n-\l)^2}$, providing an effective estimate when $i$ is small.
	
	Plugging these estimates into~\eqref{eqn:obs-ii}, we have
	\begin{align*}
		\disj \left( \pi, \cap_{s=1}^{\ell} \mc T_{(i_s,j_s)} \right) &= \sum_{i=0}^{n-\ell - c} (-1)^i \alpha_i \frac{(n-\ell)!}{i!} \\
		&= \sum_{i=0}^{n - \ell - c} (-1)^i \left( 1 - \frac{ci}{n - \ell} \right) \frac{(n-\ell)!}{i!} \pm \sum_{i=0}^{n - \ell - c} \card{\ep_i} \frac{(n- \ell)!}{i!} \\
		&= \sum_{i=0}^{n-\ell} (-1)^i \left( 1 - \frac{ci}{n- \ell} \right) \frac{(n-\ell)!}{i!} \pm \sum_{i=n-\ell - c + 1}^{n- \ell} \card{ 1 - \frac{ci}{n - \ell} } \frac{(n-\ell)!}{i!} \\
		& \quad \pm \sum_{i=0}^{n-\ell - c} \frac{c^2 i^2 (n-\ell)!}{(n-\ell)^2 i!}.
	\end{align*}
	
	Expanding the leading term gives
	\[ \sum_{i=0}^{n - \ell} (-1)^i \frac{(n-\ell)!}{i!} + c \sum_{i=0}^{n-\ell-1} (-1)^i \frac{(n-\ell-1)!}{i!} = d_{n-\ell} + c \cdot d_{n-\ell-1}.\]
	To bound the first error term, observe that if $n - \ell - c + 1 \le i \le n - \ell$ (for which we must have $c \ge 1$), we have $\card{1-\frac{ci}{n-\l}}\le c$ and $\frac{(n-\l)!}{i!}\le n^{c-1}$, and so
	\[
	\sum_{i = n-\l-c+1}^{n-\l}\card{1-\frac{ci}{n-\l}} \frac{(n-\l)!}{i!} \le c^2 n^{c-1} \le \l^2 n^{c-1} \le \ell^2 (n-\ell - 2)!.
	\] 
	Finally, using $c \le \ell$, we bound the second error term by
	\[ \sum_{i=0}^{n-\l - c}\frac{c^2 i^2 (n-\l)!}{(n-\ell)^2 i!} \le c^2 (n-\l-2)!\sum_{i\ge 0}\frac{i^2}{i!} \le 6 \l^2(n-\l-2)!. \]
	Putting these bounds together gives the desired expression for $\disj \left( \pi, \cap_{s=1}^{\ell} \mc T_{(i_s, j_s)} \right)$.
\end{proof}

\subsection{Proof of Proposition~\ref{prop:int-graphs}(a)}

Armed with these preliminaries, we may begin to prove the statements in Proposition~\ref{prop:int-graphs}, of which the first is by far the simplest.

\begin{proof}[Proof of Proposition~\ref{prop:int-graphs}(a)]
By the Bonferroni inequalities, we have
\[ \card{\mc{G}} = \card{\bigcup_{x\in V(G)}\mc{T}_x} = \sum_{1\le i\le 3}(-1)^{i-1}\left(\sum_{X\in \binom{V(G)}{i}}\card{\bigcap_{x\in X}\mc{T}_x}\right) \pm \sum_{X\in \binom{V(G)}{4}}\card{\bigcap_{x\in X}\mc{T}_x}. \]
By Observation~\ref{obs:perm}(i), $\cap_{x \in X} \mc T_x$ is empty unless $X$ induces a clique in $G$, in which case $\card{\cap_{x \in X} \mc T_x} = (n - \card{X})!$.  Hence we have
\[ \card{\mc G} = k_1 (n-1)! - k_2 (n-2)! + k_3(n-3)! \pm k_4 (n-4)!, \]
as required.
\end{proof}

\subsection{Proof of Proposition~\ref{prop:int-graphs}(b)}

In the second part of the proposition, we count the number of disjoint pairs between $\mc G$ and an arbitrary permutation $\pi \in S_n \setminus \mc G$.  We will in fact prove the more accurate estimate given in the claim below, as this will be required in the proof of part (c).

\begin{claim} \label{clm:intgraphsb}
Let $\mc G$ be a union of $k_1 \le n$ cosets with intersection graph $G$.  Then, for every $\pi \in S_n \setminus \mc G$,
\begin{align*}
	\disj(\pi, \mc G) =& \; k_1 D_{n-1} - k_2 D'_{n-2} + \left( k_3 + \sum_{xy \in E(G)} \card{ \{x_1, y_1\} \cap \{ \pi^{-1}(x_2), \pi^{-1}(y_2) \} } \right) d_{n-3} \\
	&\pm (28 k_2 + 4 k_3 + k_4) (n-4)!,
\end{align*}
where the indices $x = (x_1, x_2)$ and $y = (y_1, y_2)$ are vertices of $G$.
\end{claim}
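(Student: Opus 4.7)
My plan is to evaluate $\disj(\pi, \mc G)$ by inclusion--exclusion, truncated at level three via the Bonferroni inequalities. Writing $\mc T_x^{\pi} = \{\sigma \in \mc T_x : \sigma \text{ is disjoint from } \pi\}$, so that $\disj(\pi, \mc G) = |\bigcup_{x \in V(G)} \mc T_x^{\pi}|$, Bonferroni gives
\[
\disj(\pi, \mc G) \;=\; \sum_{\ell=1}^{3}(-1)^{\ell+1}\sum_{X \in \binom{V(G)}{\ell}} \disj\!\left(\pi, \bigcap_{x \in X}\mc T_x\right) \;\pm\; \sum_{X \in \binom{V(G)}{4}} \disj\!\left(\pi, \bigcap_{x \in X}\mc T_x\right).
\]
By Observation~\ref{obs:perm}(i), $\bigcap_{x \in X}\mc T_x$ is empty unless $X$ induces a clique of $G$, so only clique subsets contribute; the level-four error is therefore bounded crudely by $k_4(n-4)!$, matching the $k_4(n-4)!$ term in the claimed error.

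For level one, since $\pi \notin \mc G \supseteq \mc T_x$, we have $\pi(x_1) \neq x_2$ for every $x=(x_1,x_2) \in V(G)$, and the recurrence $d_n=(n-1)D_{n-1}$ combined with the symmetry that counts derangements $\tau$ of $[n]$ with $\tau(j)=m$ fixed, for any $m\neq j$, gives the \emph{exact} identity $\disj(\pi, \mc T_x) = D_{n-1}$; summation yields $k_1 D_{n-1}$. For each edge $xy \in E(G)$ at level two, I apply Observation~\ref{obs:perm}(ii) with $\ell=2$: writing $x=(x_1,x_2)$, $y=(y_1,y_2)$ and $c_{xy} = |\{x_1,y_1\}\cap\{\pi^{-1}(x_2),\pi^{-1}(y_2)\}|$, the observation gives
\[
\disj\!\left(\pi, \mc T_x \cap \mc T_y\right) \;=\; d_{n-2} + (2 - c_{xy})d_{n-3} \pm 28(n-4)! \;=\; D'_{n-2} - c_{xy}\,d_{n-3} \pm 28(n-4)!,
\]
where $28 = 7\cdot 2^2$ is the error constant $7\ell^2(n-\ell-2)!$ from Observation~\ref{obs:perm}(ii). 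Summing over the $k_2$ edges contributes $k_2 D'_{n-2} - (\sum_{xy}c_{xy})\,d_{n-3} \pm 28 k_2(n-4)!$. For level three, the same observation with $\ell=3$ yields $\disj(\pi,\cap_{x\in X}\mc T_x)=d_{n-3}\pm 4(n-4)!$ for each triangle $X$, after absorbing $(3-c')d_{n-4}\le 3(n-4)!$ and $63(n-5)!\le (n-4)!$ (assuming $n$ is large enough) into the error; this contributes $k_3\,d_{n-3}\pm 4k_3(n-4)!$.

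Assembling the three levels with Bonferroni signs $+,-,+$ gives
\[
\disj(\pi,\mc G) \;=\; k_1 D_{n-1} - k_2 D'_{n-2} + \Bigl(k_3+\sum_{xy\in E(G)}c_{xy}\Bigr)d_{n-3} \pm (28k_2+4k_3+k_4)(n-4)!,
\]
which is exactly the claim. Conceptually the argument is elementary; the only genuine obstacle is bookkeeping. One must carefully track the coefficient $c_{xy}$ that appears at level two, observing that the negative Bonferroni sign in front turns its contribution into $+\sum_{xy}c_{xy}\,d_{n-3}$, which then combines additively with the level-three $k_3\,d_{n-3}$ term; and one must check that the error constants from Observation~\ref{obs:perm}(ii) at each level really do compose to give the precise $28k_2+4k_3+k_4$ bound, so that levels three and four contribute only lower-order terms relative to the main $D_{n-1}$, $D'_{n-2}$, and $d_{n-3}$ contributions.
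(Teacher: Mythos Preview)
Your proposal is correct and follows essentially the same approach as the paper: truncated Bonferroni at level three, with Observation~\ref{obs:perm}(i) restricting to cliques and Observation~\ref{obs:perm}(ii) supplying the contributions $D_{n-1}$, $D'_{n-2}-c_{xy}d_{n-3}\pm 28(n-4)!$, and $d_{n-3}\pm 4(n-4)!$ at levels one through three, and the crude bound $(n-4)!$ at level four. Your derivation of the $\pm 4(n-4)!$ error at level three is in fact more explicit than the paper's, which simply asserts this bound.
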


We first verify that this implies the bound from the proposition.

\begin{proof}[Proof of Proposition~\ref{prop:int-graphs}(b)]
The leading term is already in the desired form.  For the second-order term, observe that $D'_{n-2} = D_{n-2} + d_{n-3}$.  In the third term, we bound the coefficient of $d_{n-3}$ above by $k_3 + 2k_2$, and recall that we also have a term of $-k_2 d_{n-3}$ from the second-order term.  Thus, in total, the third-order term is at most $(k_3 + k_2)d_{n-3}$.  Since $k_3 \le k_1 k_2$ and $d_{n-3} \le (n-3)!$, we can bound this from above by $2 k_1 k_2 (n-3)!$.

With regards to the error term, note that by making the constant $c$ in Proposition~\ref{prop:int-graphs} sufficiently small, we may assume $n$ is large.  Thus, using the aforementioned bound on $k_3$, and bounding $k_4 \le k_1^2 k_2 / 12$, we have $(28 k_2 + 4k_3 + k_4)(n-4)! \le k_1 k_2 (n-3)!$.

Substituting these estimates into the equation from Claim~\ref{clm:intgraphsb} gives $\disj(\pi, \mc G) = k_1 D_{n-1} - k_2 D_{n-2} \pm 3k_1 k_2 (n-3)!$, as required.
\end{proof}

We now prove the claim.

\begin{proof}[Proof of Claim~\ref{clm:intgraphsb}]
Let $\pi$ be an arbitrary permutation in $S_n\setminus \mc{G}$. It follows from the Bonferroni inequalities and Observation~\ref{obs:perm}(i) that 
	\begin{equation*}
	\disj(\pi,\mc{G}) = \sum_{i=1}^{3}(-1)^{i-1}\left(\sum_{\substack{X \in K_i(G)}}\disj \left(\pi,\cap_{x\in X}\mc{T}_x \right)\right)\pm \sum_{\substack{X \in K_4(G)}}\disj\left(\pi,\cap_{x\in X}\mc{T}_x\right).
	\end{equation*}
	
	Recall from Subsection~\ref{subsec:derangement} that $\disj(\pi,\mc{T}_x)=D_{n-1}$ for every $x\in V(G)$. For $xy\in E(G)$, say $x=(x_1,x_2)$ and $y=(y_1,y_2)$, Observation~\ref{obs:perm}(ii) implies 
\begin{align*}
\disj(\pi,\mc{T}_x\cap T_y) &= d_{n-2} + \left( 2 - \card{\{x_1,y_1\}\cap\{\pi^{-1}(x_2),\pi^{-1}(y_2)\}} \right) d_{n-3} \pm 28 (n-4)!  \\
	&=D_{n-2}'-\card{\{x_1,y_1\}\cap\{\pi^{-1}(x_2),\pi^{-1}(y_2)\}} d_{n-3}\pm 28(n-4)!
\end{align*}
as $D_{n-2}'=d_{n-2}+2d_{n-3}$. 
Again using Observation~\ref{obs:perm}(ii) gives $\disj\left(\pi,\cap_{x\in X}\mc{T}_x\right)=d_{n-3}\pm 4(n-4)!$ for every $X\in K_3(G)$. For each $X\in K_4(G)$, we may deduce from Observation \ref{obs:perm} (i) that
	$\disj(\pi,\cap_{x\in X}\mc{T}_x)\le \card{\cap_{x\in X}\mc{T}_x}=(n-4)!$. Combining these bounds gives the claim.
\end{proof}

\subsection{Proof of Proposition~\ref{prop:int-graphs}(c)}

In the third part of the proposition, we count the number of disjoint pairs within a union $\mc G$ of cosets, with the result depending on numerous parameters of the intersection graph $G$.  We shall once more prove a more precise estimate that we will need in the proof of part (d).

\begin{claim}\label{clm:intgraphsc}
Let $\mc G$ be a union of $k_1 \le c n^{1/2}$ cosets with intersection graph $G$.  Then
\[ \disj(\mc G) = \sum_{i=1}^3 a_i (n-i)! D_{n-1} \pm 12 k_1^3 k_2 (n-1)! (n-4)!, \]
where
\begin{align*}
a_1 &= \binom{k_1}{2}, \\
a_2 &= -(k_1 - 1)k_2, \textrm{and} \\
a_3 &= \frac12 \left( (2k_1 - 3)k_3 + (k_2 - k_1 + 1)k_2 + i(\bar{P_3},G) \right).
\end{align*}
\end{claim}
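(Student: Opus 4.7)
The plan is to reduce to Claim~\ref{clm:intgraphsb} via the decomposition $2\disj(\mc G)=\sum_{\sigma\in\mc G}\disj(\sigma,\mc G)$ together with the partition $\mc G=\bigsqcup_{S}\mc T^*_S$, where $S$ ranges over the nonempty cliques of $G$ and $\mc T^*_S:=\{\sigma\in S_n : \{x\in V(G):\sigma\in\mc T_x\}=S\}$. The key observation is that for $\sigma\in\mc T^*_S$, every permutation in $\bigcup_{x\in S}\mc T_x$ intersects $\sigma$, so $\disj(\sigma,\mc G)=\disj(\sigma,\mc G_{V\setminus S})$ with $\mc G_{V\setminus S}:=\bigcup_{y\notin S}\mc T_y$; moreover $\sigma\notin\mc G_{V\setminus S}$ by definition of $S$. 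Therefore Claim~\ref{clm:intgraphsb} applies to $G_S:=G[V\setminus S]$ and gives
\[
\disj(\sigma,\mc G_{V\setminus S})=k_1(G_S)D_{n-1}-k_2(G_S)D'_{n-2}+\bigl(k_3(G_S)+\Sigma_\sigma\bigr)d_{n-3}\pm(28k_2(G_S)+4k_3(G_S)+k_4(G_S))(n-4)!,
\]
where $\Sigma_\sigma:=\sum_{uv\in E(G_S)}|\{u_1,v_1\}\cap\{\sigma^{-1}(u_2),\sigma^{-1}(v_2)\}|$.

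Next I would compute $|\mc T^*_S|$ by inclusion--exclusion over cliques in the common neighbourhood $N_G(S):=\{y\in V(G)\setminus S: S\cup\{y\}\text{ is a clique}\}$, obtaining $|\mc T^*_S|=\sum_{W\text{ clique in }G[N_G(S)]}(-1)^{|W|}(n-|S|-|W|)!$. Expanding for small $S$ gives $|\mc T^*_{\{x\}}|=(n-1)!-\deg_G(x)(n-2)!+k_{3,x}(G)(n-3)!\pm O(k_1^2(n-4)!)$, $|\mc T^*_{\{x,y\}}|=(n-2)!-k_{3,\{x,y\}}(G)(n-3)!\pm O(k_1^2(n-4)!)$, and $|\mc T^*_S|=(n-|S|)!\pm O(k_1(n-|S|-1)!)$ for $|S|=3$; contributions from $|S|\ge 4$ use the crude bound $|\mc T^*_S|\le(n-|S|)!$ and are folded into the error via $k_{|S|}(G)\le\binom{k_1}{|S|}$ and $k_1\le cn^{1/2}$. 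Substituting into the master sum and grouping by order $(n-j)!D_{n-1}$, using $k_1(G_S)=k_1-|S|$, $k_2(G_S)=k_2-\binom{|S|}{2}-e_G(S,V\setminus S)$, and the identities $\sum_x\deg_G(x)=2k_2$ and $\sum_xk_{3,x}=3k_3=\sum_{e\in E(G)}k_{3,e}$, the $(n-1)!D_{n-1}$- and $(n-2)!D_{n-1}$-coefficients assemble into $\binom{k_1}{2}$ and $-(k_1-1)k_2$ respectively, matching $a_1$ and $a_2$.

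The main obstacle is assembling the $(n-3)!D_{n-1}$ coefficient $a_3$. The $(2k_1-3)k_3$ piece comes from combining third-order $k_3$-terms in $|\mc T^*_{\{x\}}|$, $|\mc T^*_{\{x,y\}}|$, and $|\mc T^*_S|$ for $|S|=3$, weighted by $k_1(G_S)$, $k_2(G_S)$, and $k_3(G_S)$. The $(k_2-k_1+1)k_2$ piece arises from the $k_2(G_S)$-expansions at $|S|=1,2$ together with the $D'_{n-2}$-to-$D_{n-1}$ conversion via $d_n=(n-1)D_{n-1}$. The delicate $i(\bar{P_3},G)$ piece comes from averaging $\Sigma_\sigma$: for $\sigma\in\mc T^*_{\{x\}}$ and an edge $uv\in E(G_{\{x\}})$, since $\sigma\notin\mc T_u\cup\mc T_v$ one has $\sigma^{-1}(u_2)\ne u_1$ and $\sigma^{-1}(v_2)\ne v_1$, so $|\{u_1,v_1\}\cap\{\sigma^{-1}(u_2),\sigma^{-1}(v_2)\}|=\1[\sigma(v_1)=u_2]+\1[\sigma(u_1)=v_2]$. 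The constraint $\sigma(x_1)=x_2$ forces $\sigma(u_1)=v_2$ for \emph{every} $\sigma\in\mc T^*_{\{x\}}$ precisely when $u_1=x_1$ and $v_2=x_2$ (i.e., both $u$ and $v$ are non-adjacent to $x$ in $G$ through complementary coordinates), and symmetrically for the other indicator; since $u_1\ne v_1$ and $u_2\ne v_2$, each induced $\bar{P_3}$ with apex $x$ admits exactly one such pattern and contributes $|\mc T^*_{\{x\}}|\approx(n-1)!$ to $\sum_{\sigma,uv}$, while the remaining configurations contribute only at the lower order $(n-2)!$. The asymptotic equivalence $(n-1)!d_{n-3}\sim(n-3)!D_{n-1}$ (with a discrepancy of relative order $1/n$, i.e.\ absolute order $(n-1)!(n-4)!$, which is absorbed into the error) then converts this to the $i(\bar{P_3},G)$ contribution, with the factor $\tfrac12$ coming from halving $2\disj(\mc G)$. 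All remaining error contributions--from $|S|\ge 4$, from the tails of the $|\mc T^*_S|$-expansions, from the averaging corrections, and from the errors in Claim~\ref{clm:intgraphsb}--collect, via $k_t\le\binom{k_1}{t}$, to at most $12k_1^3k_2(n-1)!(n-4)!$; the $i(\bar{P_3},G)$ identification and the third-order bookkeeping form the technical heart of the argument.
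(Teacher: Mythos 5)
Your proposal is correct and follows essentially the same strategy as the paper: partition $\mc G$ according to the exact set of cosets containing each permutation (your $\mc T^*_S$ is the paper's $\mc M_X$), observe that $\disj(\sigma,\mc G)=\disj(\sigma,\bigcup_{y\notin S}\mc T_y)$, apply Claim~\ref{clm:intgraphsb} to the induced subgraph $G[V\setminus S]$, compute $\card{\mc T^*_S}$ by inclusion--exclusion, and do the bookkeeping. The only organisational difference is that the paper groups contributions by $\card{S}=1,2,3,\ge 4$ into a separate sub-claim for each, whereas you sum directly over all $S$; the identification of the $i(\bar{P_3},G)$ term via the indicator analysis for $\sigma\in\mc T^*_{\{x\}}$ is identical to the paper's.
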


Let us first verify that this claim suffices for the proposition.

\begin{proof}[Proof of Proposition~\ref{prop:int-graphs}(c)]
The first two terms in Claim~\ref{clm:intgraphsc} are exactly as required.  We need only verify that the sum of the other terms is at most $2 k_1^2 k_2 (n-1)! (n-3)!$ in magnitude.  This is easily seen to be true, using the bounds $(2k_1 - 3)k_3 \le \frac23 k_1^2 k_2$, $k_2^2 \le \frac12 k_1^2 k_2$, and $i(\bar{P_3},G) \le k_1 k_2 \le \frac12 k_1^2 k_2$, and recalling that $k_1 \le cn^{1/2}$ for some small constant $c$.
\end{proof}

We now prove the claim.

\begin{proof}[Proof of Claim~\ref{clm:intgraphsc}]

The idea behind the proof is to partition the permutations in $\mc G$ based on how many of the cosets they are contained in.  For each vertex set $X\subseteq V(G)$, let $\mc{M}_X$ be the family of all permutations $\pi \in \mc{G}$ which satisfy $\{x\in V(G):\pi \in \mc{T}_x\}=X$.

We shall use the symbol $\dot\cup$ to denote an union of disjoint sets. From Observation \ref{obs:perm} (i) we find $\mc{M}_i:=\dot\cup_{\card{X}=i}\mc{M}_X=\dot\cup_{X\in K_i(G)}\mc{M}_X$, resulting in $\disj(\mc{M}_i,\mc{G})= \sum_{X\in K_i(G)}\disj(\mc{M}_X,\mc{G})$. The following claim evaluates these expressions.
	
	\begin{claim}\label{clm:Midisjoint}
		For $\mc G$ and $\mc M_i$ as defined above,
		\begin{itemize}
			\item[(i)] $\sum_{i\ge 4}\disj(\mc{M}_i,\mc{G}) \le k_1k_4(n-1)!(n-4)!$,
			\item[(ii)] $\disj(\mc{M}_3,\mc{G})=(k_1-3)k_3 (n-3)!D_{n-1}\pm 3k_{1}^2k_3 (n-1)!(n-4)!$,
			\item[(iii)] $\disj(\mc{M}_2,\mc{G})=\sum\limits_{i=2}^{3}c_i (n-i)!D_{n-1}\pm 16k_1k_2^2 (n-1)!(n-4)!$, where $c_2=(k_1-2)k_2$ and $c_3=-3(k_1-2)k_3-\sum\limits_{X\in K_2(G)}k_2(G-X)$, and
			\item[(iv)] $\disj(\mc{M}_1,\mc{G})=\sum\limits_{i=1}^{3}b_i (n-i)!D_{n-1}\pm  13 k_1^3 k_2 (n-1)!(n-4)!$, in which ${b_1=k_1(k_1-1)}$, $b_2=(-3k_1+4)k_2$, and $b_3=(4k_1-6)k_3-(k_1-2)k_2+i({\bar{P_3}},G)+\sum\limits_{x\in V(G)}k_{2,\{x\}}k_2(G-x)$.
		\end{itemize}
	\end{claim}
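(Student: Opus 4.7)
The uniform starting point for all four parts is the partition $\mc M_i = \dot\cup_{X \in K_i(G)} \mc M_X$, which gives $\disj(\mc M_i, \mc G) = \sum_{X \in K_i(G)} \disj(\mc M_X, \mc G)$. For a fixed clique $X$ and $\pi \in \mc M_X$, I will exploit two facts: first, $\pi$ lies in every coset $\mc T_x$ with $x \in X$ and each such coset is intersecting, so $\pi$ contributes no disjoint pair with $\bigcup_{x \in X} \mc T_x$; second, $\pi \notin \mc T_y$ for $y \notin X$ by the definition of $\mc M_X$. Writing $\mc G_X := \bigcup_{y \in V(G) \setminus X} \mc T_y$, this reduces the count to $\disj(\pi, \mc G) = \disj(\pi, \mc G_X)$ with $\pi \notin \mc G_X$, so Claim~\ref{clm:intgraphsb} applied to $\mc G_X$ (whose intersection graph is $G - X$) gives
\[
\disj(\pi, \mc G) = k_1(G{-}X) D_{n-1} - k_2(G{-}X) D'_{n-2} + C_X(\pi)\, d_{n-3} \pm O\bigl(k_2(n-4)!\bigr),
\]
where $C_X(\pi) = k_3(G{-}X) + \sum_{yz \in E(G-X)} \card{\{y_1, z_1\} \cap \{\pi^{-1}(y_2), \pi^{-1}(z_2)\}}$. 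Part (i) then follows from the crude bound $\card{\dot\cup_{i \ge 4} \mc M_i} \le \sum_{X \in K_4(G)} \card{\cap_{x \in X} \mc T_x} = k_4(n-4)!$, provided by Observation~\ref{obs:perm}(i), together with the trivial estimate $\disj(\pi, \mc G) \le \card{\mc G} \le k_1(n-1)!$.

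For parts (ii)--(iv) I combine the displayed formula with the inclusion-exclusion expansion $\card{\mc M_X} = \sum_{j \ge i}(-1)^{j-i} k_{j,X}(G)(n-j)!$, again delivered by Observation~\ref{obs:perm}(i) and truncated at the first few terms with a higher-clique error. Multiplying and summing over $X \in K_i(G)$, the $\pi$-independent contributions regroup via the identities $k_j(G-X) = \sum_{S \subseteq X}(-1)^{|S|} k_{j,S}(G)$ and $\sum_{X \in K_i(G)} k_{j,X}(G) = \binom{j}{i} k_j(G)$, producing at once the claimed leading coefficients in the $D_{n-1}$ term, e.g.\ $b_1 = k_1(k_1-1)$, $c_2 = (k_1-2)k_2$, and the $(k_1-3)k_3$ coefficient in (ii). The next-to-leading contributions, after using $D'_{n-2} = D_{n-2} + d_{n-3}$ together with the recurrence $d_n = (n-1)D_{n-1}$ from~\eqref{eqn-dn-recurrence} to convert all factors into $(n-i)!\,D_{n-1}$-form, yield the $b_2 = (-3k_1 + 4)k_2$ coefficient in (iv) and the analogous terms in (iii).

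The genuinely delicate step is evaluating the $\pi$-dependent sum $\sum_{\pi \in \mc M_X} C_X(\pi)$. For each edge $yz \in E(G-X)$ the ``diagonal'' condition $\pi^{-1}(y_2) = y_1$ forces $\pi \in \mc T_y$, which is forbidden as $y \notin X$; only the ``crossing'' conditions $\pi^{-1}(y_2) = z_1$ and $\pi^{-1}(z_2) = y_1$ survive, and each such condition imposes one further coordinate-fixing on top of $X$. By Observation~\ref{obs:perm}(i) the number of $\pi \in \bigcap_{x \in X} \mc T_x$ satisfying it equals $(n - \card{X} - 1)!$ precisely when the enlarged vertex set together with the new coordinate pair spans a clique in $G$, and $0$ otherwise. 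After restricting to $\mc M_X$ by a further inclusion-exclusion and summing across $X \in K_i(G)$ and pairs $\{y, z\}$, the surviving configurations fall into three types according to the structure of $X \cup \{y, z\}$ in $G$: complete $(i+2)$-cliques (yielding the $(k_1-3)k_3$, $-3(k_1-2)k_3$ and $(2k_1-3)k_3$ coefficients in (ii), (iii), (iv)), a single edge $yz$ plus a prescribed non-adjacency (producing the $i(\bar{P_3}, G)$ term in (iv)), and configurations of two edges avoiding a distinguished vertex or edge of $X$ (producing $\sum_{x \in V(G)} k_{2,\{x\}}\,k_2(G-x)$ in (iv) and $\sum_{X' \in K_2(G)} k_2(G - X')$ in (iii)). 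All remaining crossterms are absorbed into the stated error of order $k_1^3 k_2 (n-1)!(n-4)!$, using $k_1 \le c n^{1/2}$.

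\textbf{Main obstacle.} The conceptual input is minimal once Claim~\ref{clm:intgraphsb} is in hand, but the combinatorial bookkeeping is unusually heavy. The crux is to correctly classify, for each configuration $(X, y, z)$ contributing to $C_X(\pi)$, the graph-theoretic substructure of $G$ it witnesses --- a larger clique, an induced $\bar{P_3}$, or a disjoint vertex/edge pair --- and then to verify that the resulting multiplicities match the coefficients $b_3$, $c_3$, and the third-order coefficient in (ii) exactly, with all lower-order slack fitting into the advertised error.
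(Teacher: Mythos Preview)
Your overall strategy matches the paper's exactly: partition $\mc G = \dot\cup_i \mc M_i$, for each clique $X$ apply Claim~\ref{clm:intgraphsb} to $G-X$, multiply by $\card{\mc M_X}$ obtained from Bonferroni and Observation~\ref{obs:perm}(i), and sum over cliques.

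Two remarks on execution. First, for parts (ii) and (iii) the paper bypasses the $\pi$-dependent term entirely by invoking the coarser Proposition~\ref{prop:int-graphs}(b) in place of Claim~\ref{clm:intgraphsb}: since $\card{\mc M_X}$ is already of order $(n-\card X)!$, the contribution of $C_X(\pi)\,d_{n-3}$ after multiplication lands in the stated error, so no configuration analysis is needed there. Consequently several of your attributions are misplaced: the term $-\sum_{X\in K_2}k_2(G-X)$ in $c_3$ does not come from $C_X(\pi)$ but from $-k_2(G-X)(n-2)!D'_{n-2}$ via the identity $(n-2)!D'_{n-2}=(n-3)!D_{n-1}\pm(n-1)!(n-4)!$; the term $-3(k_1-2)k_3$ in $c_3$ comes from $(k_1-2)D_{n-1}\cdot(-k_{3,X}(n-3)!)$ summed over $X\in K_2$; and in (ii) the $(k_1-3)k_3$ is the \emph{leading} coefficient, not a $C_X(\pi)$ contribution.

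Second, for part (iv) the paper handles the $\pi$-dependent sum more cleanly than your crossing-condition inclusion-exclusion: it simply observes that for all but at most $2(n-2)!$ permutations $\pi\in\mc M_{\{x\}}$ one has $\card{\{y_1,z_1\}\cap\{\pi^{-1}(y_2),\pi^{-1}(z_2)\}}=\1_{\bar{P_3}}(x,\{y,z\})$, whence the sum over $\pi$ is $\card{\mc M_{\{x\}}}\cdot\1_{\bar{P_3}}(x,\{y,z\})$ up to a negligible error. Your route would also arrive at $i(\bar{P_3},G)$, but note that the $k_3$-coefficient in $b_3$ is $(4k_1-6)k_3$, assembled from two sources --- $(k_1-1)\sum_x k_{3,\{x\}}=3(k_1-1)k_3$ from the leading $D_{n-1}$ term against the $(n-3)!$ piece of $\card{\mc M_{\{x\}}}$, and $\sum_x k_3(G-x)=(k_1-3)k_3$ from the $k_3(G-x)$ part of $C_X(\pi)$ --- rather than a single $(2k_1-3)k_3$ as you suggest.
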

	
	Since $\mc{G}=\dot\cup_{i\ge 1}\mc{M}_i$, one has $\disj(\mc{G}) = \tfrac12 \sum_{1\le i \le k_1}\disj(\mc{M}_i,\mc{G})$.  Claim~\ref{clm:intgraphsc} thus follows by summing the above, noting that $\sum\limits_{x}k_{2,\{x\}} k_2(G-x)-\sum\limits_{X\in K_2(G)}k_2(G-X)=k_2(k_2-1)$.
\end{proof}

It remains to prove Claim~\ref{clm:Midisjoint}.  We begin by bounding the contribution from permutations in at least four cosets.

\begin{proof}[Proof of Claim~\ref{clm:Midisjoint}(i)]
	One has $\card{\mc{G}}\le k_1(n-1)!$, and $\card{\cup_{i\ge 4}\mc{M}_i}\le \sum_{X\in K_4(G)}\card{\cap_{x\in X}\mc{T}_x}\le k_4(n-4)!$ due to Observation \ref{obs:perm} (i). Hence $\sum_{i\ge 4}\disj(\mc{M}_i,\mc{G})\le \card{\cup_{i\ge 4}\mc{M}_i}\card{\mc{G}} \le k_1k_4 (n-1)!(n-4)!$.
\end{proof}

We next consider permutations in exactly three cosets.

\begin{proof}[Proof of Claim~\ref{clm:Midisjoint}(ii)]
	Fix $X\in K_3(G)$ and let $\pi \in \mc{M}_X$.  Observe that if $\sigma \in \mc T_x$ for $x \in X$, then $\pi$ and $\sigma$ intersect.  Thus, when counting the disjoint pairs between $\pi$ and $\mc G$, we need only consider the subfamily of $\mc G$ corresponding to the intersection graph given by $G - X$.  Applying Proposition~\ref{prop:int-graphs}(b) to this subgraph, we see that 
	\[ \disj(\pi,\mc{G})=\disj(\pi,\cup_{x\in V(G)-X}\mc{T}_x)=(k_1-3)D_{n-1}\pm 2k_2 (n-2)!. \] 
	On the other hand, it follows from the Bonferroni inequalities and Observation \ref{obs:perm} (i) that $\card{\mc{M}_X}=(n-3)!\pm k_{4,X} (n-4)!$ for every triangle $X\in K_3(G)$. Combining with the trivial bound $k_{4,X}\le k_1$, this gives 
	\[ \card{\mc{M}_3}=\sum_{X\in K_3(G)}\card{\mc{M}_X}=k_3 (n-3)!\pm k_1k_3 (n-4)!. \]
		Summing $\disj(\pi,\mc{G})$ over all permutations $\pi \in \mc{M}_3$ and using the estimate $2k_2\le k_1^2$ gives the desired result.
\end{proof}
	
We shall use a similar counting argument to estimate $\disj(\mc{M}_2,G)$. 

\begin{proof}[Proof of Claim~\ref{clm:Midisjoint}(iii)]
	Fix $X \in K_2(G)$ and $\pi \in \mc M_X$.  As before, any disjoint permutations come from cosets in $G - X$.  Applying Proposition~\ref{prop:int-graphs}(b) to $G-X$ gives
		\[
		\disj(\pi,\mc{G})=(k_1-2)D_{n-1}-k_2(G-X)D_{n-2} \pm 4k_1 k_2 (n-3)!.
		\]
		On the other hand, by appealing to the Bonferroni inequalities and Observation~\ref{obs:perm}(i), 	we see that $\card{\mc{M}_X}=(n-2)!-k_{3,X}\cdot (n-3)!\pm k_{4,X} (n-4)!$. These two bounds, together with the estimates $k_{3,X} \le k_1 < n$ and $k_{4,X} \le k_2 \le k_1^2$, imply
		\begin{align*}
		\disj(\mc{M}_X,\mc{G})=&(k_1-2)(n-2)!D_{n-1}-(k_1-2)k_{3,X} (n-3)!D_{n-1}\\
		&-k_2(G-X) (n-2)!D_{n-2}\pm 15 k_1 k_2 (n-1)!(n-4)!.
		\end{align*} 
As $(n-2)!D_{n-2}=(n-3)!D_{n-1}\pm (n-1)!(n-4)!$, this expression can be simplified further as
		\begin{align*}
		\disj(\mc{M}_X,\mc{G})=&(k_1-2)(n-2)!D_{n-1}-\left((k_1-2)k_{3,X}+k_2(G-X)\right) (n-3)!D_{n-1}\\
		& \pm 16k_1k_2 (n-1)!(n-4)!.
		\end{align*} 
		Summing $\disj(\mc{M}_X,\mc{G})$ over all $X\in K_2(G)$ and using the identity $\sum_{X \in K_2(G)}k_{3,X}=3k_3$ results in the desired equation.
\end{proof}

	Finally we come to what is, in some sense, the trickiest part of our proof, which is dealing with permutations in a single coset.
	
\begin{proof}[Proof of Claim~\ref{clm:Midisjoint}(iv)]
Fix a vertex $x\in V(G)$ and a permutation $\pi \in \mc M_{\{x\}}$. Once again, any disjoint permutations must come from cosets in $G - x$.  Applying Claim~\ref{clm:intgraphsb} to this subgraph, we find
		\begin{align*}
			\disj(\pi,\mc{G})=&(k_1-1)D_{n-1}-k_2(G-x)D'_{n-2}\\
			&+\left( k_3(G-x)+\sum_{(y_1,y_2)(z_1,z_2)\in E(G-x)}\card{\{y_1,z_1\}\cap \{\pi^{-1}(y_2),\pi^{-1}(z_2)\}}\right)d_{n-3}\\
			&\pm (28k_2+4k_3+k_4) (n-4)!.	
		\end{align*}
		
On the other hand, from the Bonferroni inequalities we have  
		\[
		\card{\mc{M}_{\{x\}}}= (n-1)! + \sum_{i \in \{2,3\}}(-1)^{i-1}k_{i,\{x\}} (n-i)!\pm k_{4,\{x\}} (n-4)!.
		\]
		
For each edge $(y_1,y_2)(z_1,z_2)\in E(G-x)$, we observe that for all but at most $2(n-2)!$ permutations $\pi \in \mc M_{\{x\}}$, we have $\card{\{y_1,z_1\}\cap \{\pi^{-1}(y_2),\pi^{-1}(z_2)\}}=\1_{\bar{P_3}}(x,\{y,z\})$.  Indeed, $\1_{\bar{P_3}}(x,\{y,z\}) = 1$ if and only if $x \in \{(y_1, z_2), (z_1, y_2)\}$, in which case, since $\pi(x_1) = x_2$, we have $\card{\{y_1,z_1\}\cap \{\pi^{-1}(y_2),\pi^{-1}(z_2)\}} \ge 1$, with equality unless both $\pi(y_1) = z_2$ and $\pi(z_1) = y_2$.  When $x \notin \{(y_1, z_2), (z_1, y_2)\}$, in order for $\card{\{y_1,z_1\}\cap \{\pi^{-1}(y_2),\pi^{-1}(z_2)\}}$ to be positive, we need $\pi(y_1) = z_2$ or $\pi(z_1) = y_2$ in addition to $\pi(x_1) = x_2$, thus giving at most $2(n-2)!$ exceptions in this case.
	
		Putting these facts together and summing $\disj(\pi,\mc{G})$ over all $\pi \in \mc{M}_{\{x\}}$ then gives
		\begin{align*}
			\disj(\mc{M}_{\{x\}},\mc{G}) =& (k_1-1) (n-1)!D_{n-1}- k_2(G-x) (n-1)!D'_{n-2}-(k_1-1)k_{2,\{x\}} (n-2)!D_{n-1}\\
			&+ k_{2,\{x\}} k_2(G-x) (n-2)!D'_{n-2} + (k_1-1)k_{3,\{x\}} (n-3)!D_{n-1} \\
			&+ \left( k_3(G-x) + \sum_{yz \in E(G-x)}\1_{\bar{P_3}}(x,\{y,z\}) \right) (n-1)! d_{n-3} \pm 4 k_2 (n-2)!d_{n-3}\\
			&\pm 10 k_1^2 k_2 (n-1)!(n-4)!,
		\end{align*}
		where in the final term we use $k_i \le 2 k_1^{i-2} k_2 / i!$ and the fact that $k_1 \le c n^{1/2}$ to bound the lower-order error terms.
		
		Moreover, we have the identities $(n-1)!D'_{n-2}=(n-2)!D_{n-1}+(n-3)!D_{n-1}\pm (n-1)!(n-4)!$, $(n-2)!D_{n-2}'=(n-3)!D_{n-1}\pm (n-1)!(n-4)!$, and $(n-1)!d_{n-3}=(n-3)!D_{n-1}\pm (n-1)!(n-4)!$. Hence 
		\[
		\disj(\mc{M}_{\{x\}},\mc{G})=\sum_{i=1}^{3}b_i' (n-i)!D_{n-1}\pm 13 k_1^2 k_2 (n-1)!(n-4)!,
		\]	
		where
		\begin{align*}
			b_1' &= k_1-1,\\
			b_2' &= -k_2(G-x)-(k_1-1)k_{2,\{x\}}, \textrm{and}\\
			b_3' &=-k_2(G-x)+k_{2,\{x\}} k_2(G-x) +(k_1-1)k_{3,\{x\}}+ k_3(G-x)+\sum_{yz\in E(G-x)}\1_{\bar{P_3}}(x,\{y,z\}).
		\end{align*}
		Noting that $\sum\limits_{x} k_2(G-x)=(k_1-2)k_2$, $\sum\limits_{x} k_{2,\{x\}}=2k_2$, $\sum\limits_{x} k_3(G-x)=(k_1-3)k_3$, and $\sum\limits_{x} k_{3,\{x\}}=3k_3$, and summing the above estimate for $\disj(\mc{M}_{\{x\}},\mc{G})$ over all $x\in V(G)$, we get the desired formula for $\disj(\mc{M}_1,\mc{G})$.
\end{proof}	
	
\subsection{Proof of Proposition~\ref{prop:int-graphs}(d)}

The final part of this appendix is devoted to showing that if $\mc{G}$ is a union of few cosets in $S_n$, then $\mc{G}$ has at least as many disjoint pairs as $\mc{T}(n,s)$, where $s=\card{\mc{G}}$. To bound the gap $\disj(\mc{G})-\disj(\mc{T}(n,s))$, we shall use part (III) of the following claim concerning structural properties of intersection graphs.

\begin{claim}\label{clm:intgraphsd} Let $G$ be the intersection graph of a union $\mc{G}$ of at most $cn^{1/2}$ cosets in $S_n$. Then the following properties hold.
\begin{itemize}
	\item[(I)] $k_2 \ge \max \{ k_1, 2k_1-6 \}$ unless one of the following cases occurs:
	\begin{compactitem}
		\item[\rm (i)] $\mc{G}$ is canonical;
		\item[\rm (ii)] $G$ is isomorphic to $2K_2, P_4, P_5$ or $C_4\cup K_1$.
	\end{compactitem}
	\item[(II)] If $k_2 \ge k_1$, then
	\[
	k_2 \left(k_2-k_1+1\right)\ge 2k_3+1.
	\] 
	\item[(III)] If $\mc{G}$ is not canonical, then
	\[
	k_2 \left(k_2 -k_1 +1\right)+i(\bar{P_3},G)-k_3 \ge \tfrac{1}{50}k_1 k_2.
	\] 
\end{itemize}
\end{claim}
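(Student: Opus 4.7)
The intersection graph $G$ of $\mc G$ has vertex set $V\subseteq[n]^2$ with $(i,j)\sim(i',j')$ iff $i\neq i'$ and $j\neq j'$, so the non-edges of $G$ partition into row-cliques and column-cliques. Letting $r_i,c_j$ denote row and column sizes in decreasing order, one has the basic identities
\[
\binom{k_1}{2}-k_2=\sum_i\binom{r_i}{2}+\sum_j\binom{c_j}{2},\qquad d_{(i,j)}=k_1+1-r_i-c_j,
\]
and canonicity of $\mc G$ is equivalent to $\max(r_1,c_1)\ge k_1-1$. A further key structural constraint is $r_1+c_1\le k_1+1$, since the $r_1$ vertices in the largest row lie in distinct columns.

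For Part (I), assume $G$ is non-canonical. By convexity, $\sum_i\binom{r_i}{2}$ is maximised when one row is as large as possible. In the extremal case $r_1=k_1-2$, the constraint $r_1+c_1\le k_1+1$ forces $c_1\le 3$, and a brief case-check (a column of size $3$ forces $r_2=1$) gives $\sum_i\binom{r_i}{2}+\sum_j\binom{c_j}{2}\le\binom{k_1-2}{2}+3$, i.e.\ $k_2\ge 2k_1-6$; for $r_1\le k_1-3$ the bound is even stronger. For $k_1\ge 6$ this yields $k_2\ge\max(k_1,2k_1-6)$, and for $k_1\in\{4,5\}$ I would enumerate all non-canonical row-column profiles, identifying $2K_2,P_4,P_5,C_4\cup K_1$ as the exact exceptions.

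For Part (II), I would identify $V(G)$ with the edge set of a bipartite graph $H$ (with parts given by rows and columns); then $k_t(G)=m_t(H)$ counts matchings of size $t$ in $H$. For each matching $\{e_1,e_2\}\in m_2(H)$, the number of edges of $H$ extending it to a matching of size $3$ equals $k_1-|N[e_1]\cup N[e_2]|\le k_1-\sum_{v\in e_1\cup e_2}d(v)+4$, where the $+4$ uses the bipartite bound $|N[e_1]\cap N[e_2]|\le 2$. Summing over $m_2(H)$ and rearranging (via $\sum_v d(v)^2\ge (2e(H))^2/|V(H)|$) yields $k_2(k_2-k_1+1)\ge 2k_3+1$, with the additive $+1$ coming from the existence of at least one extension when $k_2\ge k_1$.

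Part (III) is the main step and principal obstacle. Using $i(\bar{P_3},G)=k_1k_2-\sum_v d_v^2+3k_3$ together with $\sum_v d_v^2=2k_2+6k_3+2i(P_3,G)$, the left-hand side simplifies to $2(M+k_3)$, where $M$ is the number of matchings of size $2$ in $G$; thus Part (III) reduces to proving $M+k_3\ge\tfrac{1}{100}k_1k_2$ for non-canonical $G$. I would split into two regimes. First, if $r_1,c_1\le\tfrac34 k_1$, then every vertex has degree $\Omega(k_1)$ and $\sum_v d_v^2\lesssim k_1\cdot(2k_2/k_1)^2$, so the graph is near-regular with $M=\Omega(k_2^2)\gg k_1k_2$. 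Second, if $r_1$ (say) is large, in the range $[\tfrac34 k_1,k_1-2]$, then $c_1\le k_1-r_1+1$ is small, and the $\ge 2$ vertices outside the largest row together with row-1 vertices lying in distinct columns yield $\Omega((k_1-r_1)k_2)=\Omega(k_1 k_2)$ matchings directly. The main difficulty is balancing the two regimes to achieve the explicit constant $\tfrac{1}{50}$: the transitional range where $r_1$ is just below $k_1-2$ is the tightest, and here one also needs to account for triangles through row-1 vertices contributing to $k_3$.
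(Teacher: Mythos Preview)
Your reformulation of the left-hand side in Part (III) as $2(M+k_3)$, where $M$ is the number of $2$-matchings in $G$, is correct and elegant. However, your proposed proof of $M+k_3\ge\tfrac{1}{100}k_1k_2$ has a genuine gap, and you have missed the paper's much simpler route.

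The claim that ``every vertex has degree $\Omega(k_1)$'' when $r_1,c_1\le\tfrac34 k_1$ is false. For instance, with $k_1=11$, place six vertices in row~$1$ (columns $1$--$6$) and five more in column~$1$ (rows $2$--$6$); then $r_1=c_1=6\le\tfrac34 k_1$, yet the vertex $(1,1)$ is isolated. More generally, nothing in the regime $r_1,c_1\le\tfrac34 k_1$ forces near-regularity, so your bound $\sum_v d_v^2\lesssim 4k_2^2/k_1$ is unjustified. Your second regime is also only sketched, and you yourself flag the transitional range as unresolved.

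The key point you are missing is that Part (III) is designed to be an \emph{immediate corollary} of Parts (I) and (II); it does not require a separate structural analysis. In the paper's argument: if $k_2<k_1$, Part (I) forces $G\in\{2K_2,P_4,P_5,C_4\cup K_1\}$, and one checks these by hand (this is precisely where the $i(\bar P_3,G)$ term is needed). If $k_2\ge k_1\ge 6$, Part (I) gives $k_2\ge 2k_1-6$, hence $k_2-k_1+1\ge k_1-5\ge\tfrac16 k_1$; then Part (II) gives $k_2(k_2-k_1+1)\ge 2k_3+1$, so
\[
k_2(k_2-k_1+1)+i(\bar P_3,G)-k_3\;\ge\;\tfrac12 k_2(k_2-k_1+1)\;\ge\;\tfrac{1}{12}k_1k_2.
\]
For $k_2\ge k_1$ with $k_1\le 5$, Part (II) alone gives the left side $\ge 1\ge\tfrac{1}{50}k_1k_2$ since $k_1k_2\le 50$.

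Your approaches to (I) and (II) are also different from the paper's. For (I), the paper takes a line $\ell$ containing the maximum number $d$ of vertices and observes $k_2\ge(k_1-d)(d-1)$ directly, rather than maximising $\sum\binom{r_i}{2}+\sum\binom{c_j}{2}$ by convexity; the two are roughly dual and yours should go through. For (II), the paper uses induction on $k_1$ (deleting a minimum-degree vertex), whereas your bipartite-matching/Cauchy--Schwarz outline leaves both the inequality and the additive $+1$ unexplained; the inductive proof is short and self-contained, and you should consider it instead.
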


We note that parts (I) and (II) will only be used to prove part (III).  Before proving this claim, we show how it implies the final part of the proposition.

\begin{proof}[Proof of Proposition~\ref{prop:int-graphs}(d)]
If $\mc{G}$ is canonical, then $\mc{G}$ is a union of $k_1-1$ pairwise disjoint cosets and an intersecting family, and so $\disj(\mc{G})=\disj(\mc{T}(n,s))$.

Now suppose that $\mc{G}$ is not canonical. By Proposition~\ref{prop:int-graphs}(a), and since $k_4 \le k_1^2 k_2$,
	\[
	s=\card{\mc{G}} = k_1 (n-1)!- k_2 (n-2)!+ k_3 (n-3)!\pm k_1^2 k_2 (n-4)!.
	\]
	So if we write $s=:(k_1 +\ep)(n-1)!$, then 
	\[
	\ep(n-1)!=s-k_1(n-1)!=-k_2 (n-2)!+k_3 (n-3)!\pm k_1^2 k_2 (n-4)!,
	\] 
	which is non-negative since $k_3 \le k_1 k_2$ and $k_1 \le cn^{1/2}$.  Using~\eqref{est-T} with $\ep \le 0$ yields
	\[
	\disj(\mc{T}(n,s))=\sum_{1\le i \le 3}b_i (n-i)!D_{n-1} \pm k_1^3 k_2 (n-1)!(n-4)!,
	\]
	where $b_1=\binom{k_1}{2}, b_2=-(k_1-1)k_2$ and $b_3=(k_1-1)k_3$.
	We can easily derive from this and Claim~\ref{clm:intgraphsc} that
	\[ \disj(\mc{G})-\disj(\mc{T}(n,s))=\tfrac12 \left[ k_2(k_2-k_1+1)+i({\bar{P_3}},G)-k_3\right](n-3)!D_{n-1} \pm 13 k_1^3 k_2 (n-1)!(n-4)!. \]
	Furthermore, we have $k_2 (k_2- k_1 +1)+i(\bar{P_3},G)-k_3(G) \ge \tfrac{1}{50} k_1 k_2$ by Claim~\ref{clm:intgraphsd}(III). Therefore,
	\[
	\disj(\mc{G})-\disj(\mc{T}(n,s)) \ge \tfrac{1}{100}k_1 k_2 (n-3)!D_{n-1}-13 k_1^3 k_2 (n-1)!(n-4)!>0,
	\]
	as $D_{n-1}=(e^{-1}+o(1))(n-1)!$, $1\le k_1 \le cn^{1/2}$, and, since $\mc G$ is not canonical, $k_2 \ge 1$.
\end{proof}

Thus to complete the proof of Proposition~\ref{prop:int-graphs}, we need to prove Claim~\ref{clm:intgraphsd}.  The first part shows that, but for a handful of small exceptions, the intersection graph of a non-canonical union of cosets must have many edges.

\begin{proof}[Proof of Claim~\ref{clm:intgraphsd}(I)]
	It is not difficult to verify the result for $k_1 \le 5$. It remains to deal with the case that $k_1\ge 6$ and $k_2<\max\{k_1,2k_1-6\}=2k_1-6$, in which case we wish to show $\mc{G}$ to be canonical.
	
	Let $\l$ be an axis-aligned line that maximises $d:=\card{\l \cap V(G)}$. If $d\ge k_1 -1$, then $\mc{G}$ is canonical, as desired. If $d \le 2$, then $d_G(x) \ge k_1-3$ for every $x\in V(G)$. Hence, as $k_1 \ge 6$,
	\[ k_2 \ge \frac12 k_1 ( k_1 -3) \ge 3( k_1 -3)>2 k_1 -6, \]
	a contradiction. 
	
	We may therefore assume $3 \le d \le k_1-2$. Since each vertex $x\in V(G) \setminus \l$ is incident to all but at most one vertex in $\l$, we must have 
	\[ k_2 \ge (k_1-d)(d-1)\ge 2(k_1-3), \] 
	giving the required contradiction.
\end{proof}

The next part of the claim bounds the number of triangles in terms of the number of edges and vertices.

\begin{proof}[Proof of Claim~\ref{clm:intgraphsd}(II)]	
	We use induction on $k_1$.  The cases $k_1 \le 6$ can be checked by hand.
	
	Now suppose $k_1 \ge 7$. If $k_2 \ge k_1$, $\mc G$ cannot be canonical.  It then follows from part (I) that 
	\begin{equation}\label{eq-eG}
	k_2 \ge 2 k_1 -6\ge k_1 +1.
	\end{equation}
	Let $x$ be a vertex of $G$ of minimum degree. We distinguish two cases.
	
	\noindent \textbf{Case 1:} $x$ is isolated. In this case, vertices of $G$ must lie entirely in the two axis-aligned lines $\l_1$ and $\l_2$ passing through $x$, and thus $G$ is bipartite, implying $k_3=0$. As a consequence, \[ k_2 \left( k_2-k_1+1\right)\stackrel{(\ref{eq-eG})}{\ge}2(k_1+1) \ge 16>2k_3+1.\]
	
	\noindent \textbf{Case 2:} $d_G(x)\ge 1$. Let $G':=G-\{x\}$.
	Then, as $x$ is of minimum degree in $G$, 
	\[k_2(G')\ge k_2(G)-\frac{2k_2(G)}{k_1(G)}\stackrel{(\ref{eq-eG})}{>}k_1(G)-2.\]
	Thus $k_2(G') \ge k_1(G)-1 = k_1(G')$, and so the induction hypothesis applies to $G'$.  Note that
	\begin{align*}
		k_2(G)\left(k_2(G)-k_1(G)+1\right) &= (k_2(G')+d_G(x))(k_2(G')-k_1(G')+1+d_G(x)-1)\\
		&\ge k_2(G')(k_2(G')-k_1(G')+1)+d_G(x)(d_G(x)-1)
	\end{align*}
	since $d_G(x) \ge 1$ and $k_2(G') \ge k_1(G')$.  By the induction hypothesis, $k_2(G')(k_2(G')-k_1(G')+1) \ge 2k_3(G')+1$, and since there are at most $\binom{d_G(x)}{2}$ triangles in $G$ containing $x$, the right hand side of the above expression is at least $2k_3(G) + 1$.
\end{proof}

At long last, this brings us to the final proof of this paper,\footnote{We applaud the reader for making it this far.} the crucial inequality in the proof of Proposition~\ref{prop:int-graphs}(d).

\begin{proof}[Proof of Claim~\ref{clm:intgraphsd}(III)]
	
	If $k_2<k_1$, then, by part (I), $G$ is isomorphic to $2K_2, P_4, P_5$ or $C_4\cup K_1$. We can easily check that $k_2 \left(k_2-k_1+1\right)+i(\bar{P_3},G)-k_3\ge \tfrac{1}{50} k_1 k_2$ in these cases.\footnote{Observe that this is where we require the term $i(\bar{P_3}, G)$; in all other cases we simply use the fact that this is non-negative.}
	
	Suppose, then, that $k_2 \ge k_1$. If $k_1 \le 5$, then by part (II) we have 
	\[
	k_2\left(k_2-k_1+1\right)+i(\bar{P_3},G)-k_3 \ge 1 \ge \tfrac{1}{50}k_1 k_2,
	\]
	as desired.	It remains to handle the case $k_2 \ge k_1 \ge 6$. Part (I) implies $k_2 \ge 2k_1 -6$, and so $k_2-k_1+1\ge \tfrac{1}{6}k_1$. Combining this estimate with part (II), we find
	\[
	k_2\left(k_2-k_1+1\right)+i(\bar{P_3},G)-k_3 \ge \tfrac{1}{2}k_2 (k_2-k_1+1)\ge \tfrac{1}{12}k_1 k_2,
	\] 	
	finishing the proof. 
\end{proof}


\begin{thebibliography}{99}

\bibitem{Alon-C}
N. Alon and F. R. K. Chung,
\newblock Explicit constructions of linear sized tolerant networks,
\newblock {\em  Discrete Math.} {\bf 2} (1988), 15--19.

\bibitem{B-L}
J.~Balogh and H.~Liu,
\newblock On the number of $K_4$-saturating edges,
\newblock {\em  J. Combin. Theory, Ser. B} {\bf 109} (2014), 250--257.

\bibitem{BDDLS}
J.~Balogh, S.~Das,  M.~Delcourt,  H.~Liu and  M.~Sharifzadeh,
\newblock Intersecting families of discrete structures are typically trivial,
\newblock {\em  J. Combin. Theory, Ser. A} {\bf 132} (2015), 224--245.

\bibitem{BPW}
J.~Balogh, S.~Petrickova and A.~Zs.~Wagner,
\newblock Families in posets minimizing the number of comparable pairs,
\newblock {\em arXiv:1703.05427} (2017).

\bibitem{BW}
J.~Balogh and A.~Zs.~Wagner,
\newblock Kleitman's conjecture about families of given size minimizing the number of $k$-chains
\newblock {\em Adv. Math.} {\bf 330} (2018), 229--252.

\bibitem{Bollobas65} B.~Bollob\'as, 
\newblock On generalized graphs,
\newblock {\em Acta Math. Acad. Sci. Hungar.} {\bf 16} (1965), 447--452.

\bibitem{Bol-L}
B.~Bollob\'as and I.~Leader, 
\newblock Set systems with few disjoint pairs,
\newblock {\em Combinatorica} {\bf 23} (2003), 559--570.

\bibitem{C-P-S}
O.~Chervak, O.~Pikhurko and K.~Staden,
\newblock Minimum number of additive tuples in groups of prime order,
\newblock {\em Electron. J. Comb.} {\bf 26} (2019), P1.30.

\bibitem{D-G-S}
S.~Das, W.~Gan and B.~Sudakov,
\newblock Sperner's Theorem and a problem of Erd\H{o}s-Katona-Kleitman,
\newblock {\em  Combin. Probab. Comput.} {\bf 24} (2015), 585-608.

\bibitem{D-G-S-2}
S.~Das, W.~Gan and B.~Sudakov, 
\newblock The minimum number of disjoint pairs in set systems and related problems, 
\newblock {\em Combinatorica} {\bf 36} (2016), 623--660.

\bibitem{DT}
S.~Das and T.~Tran,
\newblock Removal and stability for Erd\H{o}s-Ko-Rado, 
\newblock {\em  SIAM J. Discrete Math.} {\bf 30} (2016), 1102--1114.

\bibitem{DF}
M.~Deza and P.~Frankl, 
\newblock On the maximum number of permutations with given maximal or minimal distance,
\newblock {\em J. Combin. Theory, Ser. A} {\bf 22} (1977), 352--360.

\bibitem{DGKS}
A.~P.~Dove, J.~R.~Griggs, R.~J.~Kang and J.-S.~Sereni,
\newblock Supersaturation in the Boolean lattice,
\newblock {\em Integers} {\bf 14A} (2014), \#A4.

\bibitem{E12}
D.~Ellis,
\newblock A proof of the Cameron-Ku conjecture, 
\newblock {\em J. London Math. Soc.} {\bf 85} (2012), 165--190.

\bibitem{EFF}
D.~Ellis, Y.~Filmus and E.~Friedgut, 
\newblock A quasi-stability result for dictatorships in $S_n$, 
\newblock {\em Combinatorica} {\bf 35} (2015), 573--618.

\bibitem{EFP}
D.~Ellis, E.~Friedgut and H.~Pilpel,
\newblock Intersecting families of permutations,
\newblock {\em J. Amer. Math. Soc.} {\bf 24} (2011), 649--682.

\bibitem{E-R}
P.~Erd\H{o}s,
\newblock On a theorem of Rademacher-Tur\'an,
\newblock {\em  Illinois J. Math} {\bf 6} (1962), 122--127.

\bibitem{E-clique}
P.~Erd\H{o}s,
\newblock On the number of complete subgraphs contained in certain graphs,
\newblock {\em  Magy. Tud. Acad. Mat. Kut. Int. K\"ozl.} {\bf 7} (1962), 459--474.

\bibitem{erd65}
P.~Erd\H{o}s,
\newblock{A problem on independent $r$-tuples},
\newblock {\em Ann. Univ. Sci. Budapest. E\"otv\"os Sect. Math.} {\bf 8} (1965), 93--95.

\bibitem{ekr61}
P.~Erd\H{o}s, C.~Ko and R.~Rado,
\newblock{Intersection theorems for systems of finite sets},
\newblock{\em Q. J. Math.}, {\bf 12} (1961), 313--320.

\bibitem{Filmus}
Y.~Filmus,
\newblock Friedgut--Kalai--Naor theorem for slices of the Boolean cube,
\newblock {\em Chicago J. Theoret. Comput. Sci.} (2016), article 14, pages 1--17.

\bibitem{Frankl13}
P.~Frankl, \newblock Improved bounds for Erd\H{o}s' matching conjecture,
\newblock {\em J. Combin. Theory, Ser. A} {\bf 120} (2013), 1068--1072.

\bibitem{FKR}
P.~Frankl, Y.~Kohayakawa and V.~R\"odl,
\newblock A note on supersaturated set systems,
\newblock {\em European J. Combin.} {\bf 51} (2016), 190--199. 

\bibitem{fk17-stability}
P.~Frankl and A.~Kupavskii,
\newblock Families with no $s$ pairwise disjoint sets,
\newblock {\em J.~London Math. Soc.} {\bf 95} (2017), 875--894.

\bibitem{fk17-counting} 
P.~Frankl and A.~Kupavskii,
\newblock Counting intersecting and pairs of cross-intersecting families,
\newblock {\em Comb. Prob. Comput.} {\bf 27} (2018), 60--68.

\bibitem{hm67}
A.~J.~Hilton and E.~C.~Milner,
\newblock{Some intersection theorems for systems of finite sets},
\newblock{\em Q. J. Math.} {\bf 18} (1967), 369--384.

\bibitem{H14}
S.~Huczynska, 
\newblock{Beyond Sum-Free Sets in the Natural Numbers},
\newblock{\em Electron. J. Comb.} {\bf 21(1)} (2014), \#P1.21.

\bibitem{Katona}
G.~Katona, 
\newblock A theorem of finite sets, 
\newblock {\em Theory of graphs}, (Proc. Colloq., Tihany, 1966), Academic Press, New York, 1968, pages 187--207.

\bibitem{KKK}
G.~O.~H.~Katona, G.~Y.~Katona and Z.~Katona, 
\newblock Most probably intersecting families of subsets,
\newblock {\em Comb. Prob. Comput.} {\bf 21} (2012), 219--227.

\bibitem{K-N}
N.~Khad\v{z}iivanov and V.~Nikiforov,
\newblock Solution of a problem of P.~Erd\H{o}s about the maximum number of triangles with a common edge in a graph (Russian),
\newblock {\em  C. R. Acad. Bulgare Sci.} {\bf 32} (1979), 1315--1318.

\bibitem{Kl}
D.~Kleitman,
\newblock A conjecture of Erd\H{o}s-Katona on commensurable pairs among subsets of an $n$-set,
\newblock {\em  Theory of Graphs, Proc. Colloq., Tihany}, (1966), 215--218.

\bibitem{Kruskal}
J.~B.~Kruskal, 
\newblock The number of simplices in a complex, 
\newblock {\em Mathematical optimization techniques}, Univ. of California Press, Berkeley, Calif., 1963, pages 251--278.

\bibitem{LPS17}
H.~Liu, O.~Pikhurko and K.~Staden,
\newblock The exact minimum number of triangles in graphs of given order and size,
\newblock {\em arXiv:1712.00633} (2017).

\bibitem{Lovasz-KK}
L.~Lov\'asz,
\newblock Combinatorial problems and exercises, 2nd ed., AMS Chelsea Publishing, Providence, RI, 2007.

\bibitem{Lovasz}
L.~Lov\'asz,
\newblock On the Shannon capacity of a graph,
\newblock {\em  IEEE T. Inform. Theory} {\bf 25} (1979), 1--7.

\bibitem{L-S}
L.~Lov\'asz and M.~Simonovits,
\newblock On the number of complete subgraphs of a graph, {II}.
\newblock {\em  Studies in pure mathematics}, 459--495, Birkhuser, Basel, 1983.

\bibitem{Mantel}
W.~Mantel,
\newblock Problem 28,
\newblock {\em  Winkundige Opgaven} {\bf 10} (1907), 60--61.

\bibitem{Mu}
D.~Mubayi,
\newblock Counting substructures {I}: color critical graphs,
\newblock {\em  Adv. Math.} {\bf 225} (2010), 2731--2740.

\bibitem{Niki}
V.~Nikiforov,
\newblock The number of cliques in graphs of given order and size,
\newblock {\em Trans.\ Amer.\ Math.\ Soc.} {\bf 363} (2011), 1599--1618.

\bibitem{NSS}
J.~Noel, A.~Scott and B.~Sudakov,
\newblock Supersaturation in posets and applications involving the container method,
\newblock {\em J. Combin. Theory, Ser. A} {\bf 154} (2018), 247--284.

\bibitem{PY}
O.~Pikhurko and Z.~Yilma,
\newblock Supersaturation problem for color-critical graphs,
\newblock {\em J. Combin. Theory, Ser. B} {\bf 123} (2017), 148--185.

\bibitem{Raz}
A.~Razborov,
\newblock On the minimal density of triangles in graphs,
\newblock {\em  Combin.\ Probab.\ Comput.} {\bf 17} (2008), 603--618.

\bibitem{Rei}
C.~Reiher,
\newblock The clique density theorem,
\newblock {\em  Ann. Math.} {\bf 184} (2016), 683--707.

\bibitem{R07}
P.~Renteln, 
\newblock On the Spectrum of the Derangement Graph,
\newblock {\em Electron. J. Combin.} {\bf 14} (2007), R82.

\bibitem{Sam}
W.~Samotij,
\newblock Subsets of posets minimising the number of chains,
\newblock {\em Trans. Amer. Math. Soc.} {\bf 371} (2019), 7259--7274.

\bibitem{SS1}
W.~Samotij and B.~Sudakov,
\newblock On the number of monotone sequences,
\newblock {\em  J. Combin. Theory, Ser. B} {\bf 115} (2015), 132--163.

\bibitem{SS2}
W.~Samotij and B.~Sudakov,
\newblock The number of additive triples in subsets of abelian groups,
\newblock {\em  Math. Proc. Cambridge Philos. Soc.} {\bf 160} (2016), 495--512.



\end{thebibliography}
\end{document}